\def\f{\mathcal{F}}
\def\fo{\mathcal{F}}
\def\g{\mathcal{G}}
\def\fin{\mathcal{FIN}}
\def\vc{\mathcal{VCY}}
\def\vci{\mathcal{VC}^\infty}
\def\all{\mathcal{ALL}}
\def\evc{\underline{\underline{E}}G}
\def\evce{\underline{\underline{E}}}
\def\efin{\underline{E}G}
\def\nh{N_{G}[H]}
\def\gh{\mathcal{G}[H]}
\def\z{\mathbb{Z}}
\def\r{\mathbb{R}}
\def\h{\mathrm{Diff^+}}
\def\mcg{\Gamma(S)}
\def\mcgm{\Gamma_m(S)}
\def\mcgsi{\Gamma(\widehat{S_i})}
\def\mcgss{\Gamma(\widehat{\ss})}
\def\dif{\mathrm{Diff^+}}
\def\cs{\mathcal{C}(S)}
\def\ss{S_{\sigma}}
\def\C{\mathcal{C}^{\infty}}
\def\gd{\mathrm{gd}}
\def\gdf{\underline{\mathrm{gd}}}
\def\gdvc{\underline{\underline{\mathrm{gd}}}}
\def\rhosm{\rho_{\sigma,m}}
\def\rhos{\rho_{\sigma}}
\def\rhoso{\rho_{\sigma,0}}
\def\mcgsiqi{\Gamma(\widehat{S_i},\mathcal{Q}_i)}
\newtheorem{thm}{Theorem}[section]
\newtheorem{prop}[thm]{Proposition}
\newtheorem*{Main}{Main Theorem}
\newtheorem{proposition}{Proposition}
\newtheorem{theorem}{Theorem}
\theoremstyle{definition}
\newtheorem{defi}[thm]{Definition}
\newtheorem{rem}[thm]{Remark}
\newtheorem{lem}[thm]{Lemma}
\newtheorem{cor}[thm]{Corollary}
\title[Classifying Spaces]{On classifying spaces for the family of virtually cyclic subgroups in mapping class groups}
\author{Daniel Juan-Pineda}
\address{Centro  de  Ciencias Matem\'aticas. \\
UNAM Campus  Morelia\\
Ap.Postal  61-3 Xangari\\ Morelia, Michoac\'an.  M\'EXICO 58089}
 \email{daniel@matmor.unam.mx}
\thanks{We acknolwedge support from research grants from IN105614-DGAPA-UNAM and 151338-CONACyT-M\'exico}
\author{Alejandra Trujillo-Negrete}
\address{Centro  de  Ciencias Matem\'aticas. \\
UNAM Campus  Morelia\\
Ap.Postal  61-3 Xangari\\ Morelia, Michoac\'an.  M\'EXICO 58089}
 \email{aletn@matmor.unam.mx}
\begin{document}

\begin{abstract}
      We give a bound for  the geometric dimension for the family of virtually cyclic groups in mapping class groups of an orientable compact
      surface with punctures, possibly with nonempty boundary and negative Euler characteristic. 
\end{abstract}

\maketitle
\section{Introduction}

Let $S$ be an orientable compact surface with finitely many punctures (possibly zero punctures) and negative Euler characteristic. The mapping class group, $\mcg$, of $S$  is the group of isotopy classes of orientation preserving diffeomorphisms of $S$ that fix point-wise the boundary. Let $m>2$ be an integer and   $\Gamma_m(S)$ be the congruence subgroup of $\mcg$, this is the subgroup of those elements that act trivially on $H_1(S,\z/m)$.

Classifying spaces,  $\efin$, for the family of finite subgroups of a group $G$,    have been extensively studied. For the mapping class group, it is well-known that the Teichm\"uller space $\mathcal{T}(S)$ is a model for $\underline{E} \mcg$ by results of Kerckhoff given in \cite{Kerckhoff}.  Later, J. Aramayona and C. Mart\'inez proved in \cite{aramayona} that the minimal dimension for which there exists a model for  $\underline{E}\mcg$ coincides with the virtual cohomological dimension $vcd(\mcg )$, this has been computed by J. L. Harer   in \cite{harer}.
 
Finite dimensional models for 
 classifying spaces $\evc$ for the family of virtually cyclics  have been constructed for word-hyperbolic groups (Juan-Pineda, Leary \cite{Daniel-Leary}), for groups acting in CAT(0) spaces (Farley \cite{farley}, L\"uck \cite{luck-cat(0)-vyc}, Degrijse and Petrosyan
\cite{Dieter}), and many  other groups.  

A  method to construct a model for $\evc$ is to start with a model for $\efin$ and then try to extend this to obtain a model for $\evc$. In \cite{luck-weiermann}  L\"uck and Weiermann gave a general construction 
using this idea.

The smallest possible dimension of a model of $\evc$  is denoted by $\gdvc G$ and is called the geometric dimension of $G$ for the family of virtually cyclic subgroups.  
We prove that $\gdvc\mcg$  is finite. Degrijse and Petrosyan proved this fact in  \cite{Dieter} for closed surfaces of genus at least 2, although our method produces a larger bound, it is more general since it includes surfaces with boundary and our techniques are different from theirs.

This paper is organized as follows: we review the fundamental material about classifying spaces and mapping class groups in sections   \ref{sec-classif} and \ref{sec-mcg} respectively. In section \ref{sec-commen}, we develop 
the analysis of commensurators  of infinite virtually cyclic subgroups in the mapping class groups and we prove: 

\begin{proposition}\label{intro-prop-comm}
Let $S$ be an orientable closed  surface with finitely many punctures  and $\chi(S)<0$. Let $m\geq 3$ be fixed. Let $C=\langle g\rangle \subset \mcg$ be infinite cyclic and  $n\in \mathbb{N}$ such that $D=\langle g^n\rangle \subset \mcgm$. Then  
$$  N_{\mcg}[C]= N_{\mcg}(D) $$
where $N_{\mcg}[C]$ is the commensurator  of $C$ and   $N_{\mcg}(D)$ is the normalizer of  $D$.  
Furthermore, the subgroup $D$ may be chosen to be  maximal in $\mcgm$. 
\end{proposition}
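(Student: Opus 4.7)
The plan is to prove both inclusions of $N_{\mcg}[C]=N_{\mcg}(D)$ separately, with the containment $N_{\mcg}[C]\subseteq N_{\mcg}(D)$ carrying the bulk of the work. A key preliminary observation is that $\mcgm$ is normal in $\mcg$, since it is the kernel of the action of $\mcg$ on $H_1(S,\mathbb{Z}/m)$. The easy direction $N_{\mcg}(D)\subseteq N_{\mcg}[C]$ is immediate: if $h$ normalizes $D$, then $hDh^{-1}=D\subseteq C\cap hCh^{-1}$ with $[C:D]=n=[hCh^{-1}:D]$, so $h$ commensurates $C$.

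For the reverse inclusion, take $h\in N_{\mcg}[C]$. By normality of $\mcgm$, the conjugate $hDh^{-1}$ still lies in $\mcgm$. The commensurability of $C$ with $hCh^{-1}$ together with the finite index $[C:D]=n$ forces $D\cap hDh^{-1}$ to have finite index in both $D$ and $hDh^{-1}$. Thus the generators $g^n$ and $hg^nh^{-1}$ share a nontrivial common power inside $\mcgm$. The crux is then a technical lemma to the effect that two infinite cyclic subgroups of $\mcgm$ sharing a finite-index subgroup actually coincide, once one of them is taken to be maximal cyclic in $\mcgm$. Granting this, $hDh^{-1}=D$, whence $h\in N_{\mcg}(D)$.

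To construct a maximal $D$ of the required form, I would choose the smallest $n_{0}\geq 1$ with $g^{n_{0}}\in\mcgm$ and then enlarge $\langle g^{n_{0}}\rangle$ to a maximal cyclic subgroup of $\mcgm$; termination of this root-extraction should follow from the fact that an infinite-order element of $\mcg$ admits only finitely many roots, itself a consequence of Nielsen--Thurston and the bounded-rank centralizer structure. The main obstacle is the common-powers lemma. Its proof should split via Nielsen--Thurston into two cases: in the pseudo-Anosov case one uses that the centralizer in $\mcg$ is virtually infinite cyclic, so its intersection with the torsion-free $\mcgm$ is infinite cyclic, and maximality of $D$ closes the argument; in the reducible case one exploits that elements of $\mcgm$ are pure to cut along the canonical reduction system and reduce the problem to pseudo-Anosov components on subsurfaces. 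The torsion-freeness of $\mcgm$, valid because $m\geq 3$, is essential in passing from virtually cyclic centralizer data in $\mcg$ to genuine cyclicity inside $\mcgm$.
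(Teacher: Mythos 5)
Your argument is correct in outline, but it takes a genuinely different route from the paper's. The paper first verifies Condition (C) for $\mcg$ (Proposition \ref{cond-c-mod}, via the dilatation argument for pseudo-Anosovs and a separate analysis of reducibles), then invokes L\"uck's Lemma \ref{lem-norm-cond-c} to write $N_{\mcg}[C]=\bigcup_{k\geq 1} N_{\mcg}(g^{k!})$, and finally shows that this chain stabilizes at $N_{\mcg}(g^{n})$ using Bonatti--Paris uniqueness of roots in the pure, torsion-free group $\mcgm$ (Theorem \ref{f^m=g^m} and Lemma \ref{lem-n(f)=n(fk)}); maximality of $D$ is then quoted from Proposition \ref{prop-modm-maximality} as an afterthought. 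You instead make unique maximality the engine of the whole proof: your ``common-powers lemma'' is exactly the paper's Proposition \ref{prop-modm-maximality} (the property $Max_{\vci_{\mcgm}}$), since two commensurable maximal infinite cyclic subgroups of $\mcgm$ must coincide once every infinite cyclic subgroup lies in a unique maximal one. This route bypasses Condition (C) entirely (indeed it implies it), and the paper needs Proposition \ref{prop-modm-maximality} anyway for the dimension bound; the cost is that you front-load the hardest lemma of the paper, and your sketch of it is thin in the reducible case, where ``reduce to pseudo-Anosov components'' must also account for the free abelian kernel of $\rho_{\sigma}$ generated by the Dehn twists along the reduction system, and requires an abelian-subgroup criterion such as Ivanov's (Lemma \ref{lem-abelian}), not just McCarthy's centralizer theorem.

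Three points to tighten. First, your lemma as literally stated (two infinite cyclic subgroups sharing a finite-index subgroup coincide once \emph{one} of them is maximal) is false: compare $\langle d\rangle$ with $\langle d^{2}\rangle$. You need both to be maximal, which does hold in your application because $\mcgm\trianglelefteq\mcg$, so $hD_{max}h^{-1}$ is again maximal cyclic in $\mcgm$. Second, the proposition asserts $N_{\mcg}[C]=N_{\mcg}(D)$ for the \emph{given} $D=\langle g^{n}\rangle$, not only for its maximal overgroup $D_{max}$; you recover this by observing that $D$ is characteristic in $D_{max}$ (subgroups of $\z$ are characteristic), so $N_{\mcg}(D_{max})\subseteq N_{\mcg}(D)$, and then the chain $N_{\mcg}(D)\subseteq N_{\mcg}[C]\subseteq N_{\mcg}(D_{max})$ collapses. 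Third, the ``finitely many roots'' argument for termination of root extraction is unnecessary: the unique maximal cyclic subgroup containing $\langle g^{n_{0}}\rangle$ is produced directly by Proposition \ref{prop-modm-maximality}; in the pseudo-Anosov case it is simply $C_{\mcgm}(g^{n_{0}})$, which is torsion-free and virtually cyclic, hence infinite cyclic, by Theorem \ref{normalizer-pseudoA}.
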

From Proposition \ref{intro-prop-comm} and a description of normalizers of infinite cyclic subgroups that we develop in section  \ref{sec-commen}, we  prove the following:
\begin{theorem}
Let $S$ be an orientable compact surface with finitely many punctures  and $\chi(S)<0$. Then $\gdvc \mcg< \infty$, that is, the  mapping class group $\mcg$ admits a finite dimensional model for $\underline{\underline{E}} \mcg$. 
\end{theorem}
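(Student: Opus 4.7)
The plan is to apply the L\"uck--Weiermann pushout construction from \cite{luck-weiermann}, which produces a model for $\evce \mcg$ from a model for $\underline{E}\mcg$ together with, for each equivalence class $[V]$ of maximal infinite virtually cyclic subgroup of $\mcg$ under commensurability, a suitable classifying space for its commensurator $N_{\mcg}[V]$. Concretely, the construction yields a bound of the form
\[
\gdvc(\mcg) \;\leq\; \max\bigl\{\gdf(\mcg),\ \gdf(N_{\mcg}[V]) + 1 \bigr\},
\]
where the maximum runs over a set of representatives $[V]$. Hence it suffices to bound the two quantities on the right.

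For the first term, Teichm\"uller space $\mathcal{T}(S)$ is a finite-dimensional model for $\underline{E}\mcg$ by Kerckhoff \cite{Kerckhoff} and Aramayona--Mart\'inez \cite{aramayona}, so $\gdf(\mcg)<\infty$. For the second, given an infinite virtually cyclic $V\subset \mcg$, pick an infinite cyclic $C=\langle g\rangle\subset V$ of finite index, so that $N_{\mcg}[V]=N_{\mcg}[C]$, and choose $n$ with $D=\langle g^n\rangle\subset \mcgm$ maximal. Proposition \ref{intro-prop-comm} then identifies $N_{\mcg}[C]$ with the honest normalizer $N_{\mcg}(D)$, reducing the problem to constructing a finite-dimensional $\underline{E}$-model for $N_{\mcg}(D)$.

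Using the Nielsen--Thurston classification of the generator of $D$ (which cannot be periodic since $\mcgm$ is torsion-free for $m\geq 3$) together with the description of normalizers developed in Section \ref{sec-commen}, the argument splits into two cases. If $g^n$ is pseudo-Anosov, then $N_{\mcg}(D)$ is virtually infinite cyclic and $\r$ is a one-dimensional model for its $\underline{E}$-space. If $g^n$ is reducible, then $N_{\mcg}(D)$ fits into a short exact sequence whose kernel is a finitely generated free abelian group generated by Dehn twists along the canonical reduction system of $g^n$, and whose quotient embeds, up to finite index, into the mapping class group of the surface cut along that system. Both pieces admit finite-dimensional $\underline{E}$-models---the kernel because it is abelian, and the quotient by induction on the complexity of $S$, since the cut surface has strictly smaller complexity---and standard constructions for $\underline{E}$ of group extensions combine them into a finite-dimensional model for $N_{\mcg}(D)$.

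The main obstacle will be in the reducible case: the induction must be organized carefully, uniformly tracking both the contribution of the Dehn twist kernel along the reduction system and the dimension coming from the mapping class groups of the subsurface factors, so that feeding the resulting bounds back into the L\"uck--Weiermann pushout produces a single finite bound for $\gdvc(\mcg)$.
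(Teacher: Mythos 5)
There is a genuine gap in the reduction step. The L\"uck--Weiermann pushout (Theorem \ref{thm-luck-weierm}) requires, for each class $[C]$, \emph{two} inputs attached to the commensurator: a model for $\underline{E}(N_{\mcg}[C])$ \emph{and} a model for $E_{\g[C]}(N_{\mcg}[C])$, where $\g[C]$ is the enlarged family of subgroups of $N_{\mcg}[C]$ that are either finite or infinite virtually cyclic and commensurable with $C$. The resulting dimension bound is
\[
\gdvc \mcg \;\leq\; \max\bigl\{\gdf \mcg,\; \gdf N_{\mcg}[C]+1,\; \gd_{\g[C]}N_{\mcg}[C]\bigr\},
\]
and there is no general inequality between $\gd_{\g[C]}N_{\mcg}[C]$ and $\gdf N_{\mcg}[C]$, so the third term cannot be dropped. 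Your stated bound omits it, and your entire case analysis (pseudo-Anosov versus reducible, the Dehn-twist kernel, the cut-surface quotient) is aimed at constructing $\underline{E}N_{\mcg}(D)$ for the family of \emph{finite} subgroups. But that ingredient is immediate: $N_{\mcg}(D)$ is a subgroup of $\mcg$, so restricting a finite-dimensional model for $\underline{E}\mcg$ (Kerckhoff, Aramayona--Mart\'inez) already gives $\gdf N_{\mcg}(D)\leq vcd(\mcg)$, uniformly. If that were all that was needed, the theorem would follow in one line from $\gdf\mcg<\infty$; the actual work lies elsewhere.

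The missing step is the one the paper supplies in Remark \ref{rem-w-ec}: after identifying $N_{\mcg}[C]$ with the honest normalizer $N_{\mcg}(D)$ of a maximal $D=\langle g^n\rangle\subset\mcgm$, one passes to the quotient $W_{\mcg}(D)=N_{\mcg}(D)/D$ and observes that a model for $\underline{E}W_{\mcg}(D)$, pulled back along the projection, is a model for $E_{\g[C]}N_{\mcg}[C]$. It is $\gdf W_{\mcg}(D)$ that must be bounded uniformly over all $[C]$. Your exact-sequence analysis is essentially the right tool, but it must be applied to $W$ rather than to $N$: in the pseudo-Anosov case $W$ is finite and a point suffices; in the reducible case the Dehn-twist kernel $\z^r$ becomes $\z^{r-1}$ (or $\z^r$) after quotienting by $\langle f\rangle$, and the subsurface factors $\Gamma(\widehat{S}_i,\mathcal{Q}_i)$ (or their centralizer subgroups $V_j$) get modded out by the image of $f$, which is where Proposition \ref{prop-guido} and the uniform bounds $r\leq\frac{-3\chi(S)-n}{2}$, $k\leq-\chi(S)$ from Remark \ref{rem-euler-k-r} enter. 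With that correction your outline matches the paper's argument; as written, it proves finiteness of the wrong quantity.
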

Our main result is the following:

\begin{Main} 
Let $S$ be an orientable compact surface with finitely many punctures  and $\chi(S)<0$. 
 Let $m\geq 3$, then 
\begin{enumerate}
\item[(1)] $\gdvc \Gamma_m(S) \leq vcd( \Gamma(S)) +1 $;
\item[(2)] Let $[\Gamma(S):\Gamma_m(S) ]$ be the index  of  $\Gamma_m(S)$ in $\Gamma(S)$, then
\begin{align*}\gdvc \Gamma(S) &\leq  [\Gamma(S):\Gamma_m(S) ] \cdot \gdvc \Gamma_m(S)\\ &\leq [\Gamma(S):\Gamma_m(S) ]  \cdot (vcd(\Gamma(S) ) +1 ) .\end{align*} 
\end{enumerate}
Where $vcd(\Gamma(S))$ is the virtual cohomological dimension of $\Gamma(S)$. 
We point out that the bound in $(1)$ is sharp.
\end{Main} 

A key result in finding the above is the following Proposition which we could not find in the literature and we think is one of the most interesting
contributions of this work, see Proposition \ref{prop-modm-maximality}.
\begin{proposition}
Let $S$ be an orientable compact surface with finitely many punctures  and $\chi(S)<0$. 
 Let $m\geq 3$, then the group $\mcgm $ satisfies the  property   that every infinite virtually cyclic subgroup is contained in a unique maximal virtually cyclic subgroup.
 \end{proposition}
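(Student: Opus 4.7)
The plan is to exploit torsion-freeness of $\mcgm$ together with Ivanov's structural theory of pure mapping classes, reducing the problem to a coordinate-wise analysis inside the stabilizer of a canonical reduction system.

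Since $m\geq 3$, $\mcgm$ is torsion-free, so every infinite virtually cyclic subgroup is in fact infinite cyclic. It therefore suffices to show that every infinite cyclic $\langle g\rangle\subset \mcgm$ is contained in a unique maximal cyclic subgroup. The key input is that for $m\geq 3$ every element of $\mcgm$ is \emph{pure} in the sense of Ivanov: it fixes componentwise its canonical reduction system $\sigma(g)$ and acts on each component of $S\setminus\sigma(g)$ either as the identity or as a pseudo-Anosov. Because $\sigma(h^n)=\sigma(h)$ for every $n\neq 0$, any $f$ with $f^n=g$ satisfies $\sigma(f)=\sigma(g)=:\sigma$, so every candidate root lies in $\mathrm{Stab}(\sigma)\cap\mcgm$, which fits in a short exact sequence
$$1\longrightarrow T\longrightarrow \mathrm{Stab}(\sigma)\cap\mcgm\longrightarrow \prod_i\Gamma_m(S_i)\longrightarrow 1,$$
where $T\cong\mathbb{Z}^{|\sigma|}$ is generated by Dehn twists along the curves of $\sigma$ and the $S_i$ are the subsurfaces obtained by cutting $S$ along $\sigma$.

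The equation $f^n=g$ now decouples across the factors: $f$ must have twist coefficient $a_c(g)/n$ along each $c\in\sigma$, and $f|_{S_i}$ must be an $n$-th root of $g|_{S_i}$ for each $i$. Where $g|_{S_i}$ is the identity, torsion-freeness of $\Gamma_m(S_i)$ forces $f|_{S_i}=\mathrm{id}$. Where $g|_{S_i}$ is pseudo-Anosov, its centralizer in $\Gamma(S_i)$ is virtually cyclic; intersecting with the torsion-free $\Gamma_m(S_i)$ yields an infinite cyclic group $\langle c_i\rangle$ with $g|_{S_i}=c_i^{m_i}$, so the $n$-th roots of $g|_{S_i}$ in $\Gamma_m(S_i)$ exist precisely when $n\mid m_i$ and are unique when they do. The admissible exponents $n$ therefore form the finite, nonempty set of positive common divisors of the (finitely many) twist coefficients of $g$ and of the integers $m_i$. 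Taking $n_0$ maximal produces a unique primitive root $f_0$ of $g$, and $\langle f_0\rangle$ is the unique maximal cyclic subgroup of $\mcgm$ containing $\langle g\rangle$.

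The main obstacle is the middle step: one must carefully invoke Ivanov's theory to establish purity of elements of $\mcgm$, the invariance $\sigma(h^n)=\sigma(h)$, and the explicit short exact sequence decomposition of $\mathrm{Stab}(\sigma)\cap\mcgm$ into Dehn twists about curves of $\sigma$ and pure mapping class groups of the complementary pieces. Once these ingredients are in place, the root extraction is routine: it combines uniqueness of divisors in a free abelian group of twists with uniqueness of pseudo-Anosov roots coming from the virtually cyclic centralizers.
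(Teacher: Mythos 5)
Your overall strategy --- torsion-freeness reduces everything to infinite cyclic subgroups, every root of $g$ preserves $\sigma(g)$ and hence lies in the stabilizer, and the analysis decouples into twist coordinates plus pseudo-Anosov components with unique roots --- is essentially the paper's Case I. The genuine gap is in the closing step. You claim the admissible exponents (those $n$ for which $g$ has an $n$-th root in $\Gamma_m(S)$) are \emph{exactly} the common divisors of the twist coefficients and of the integers $m_i$, and then take the largest one. Only the necessity half of that characterization follows from your decoupling. The existence half is unjustified and problematic: even when $n$ divides all the relevant integers, the element you would assemble from the $c_i^{m_i/n}$ and the fractional twists is only determined up to the boundary-twist kernels of the inclusion homomorphisms, and, more seriously, there is no reason it lies in the congruence subgroup $\Gamma_m(S)$ (membership is an extra homological condition; indeed $\ker(\rho_\sigma)\cap\Gamma_m(S)$ is in general a proper finite-index subgroup of $\langle T_{\alpha_1},\dots,T_{\alpha_r}\rangle$). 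Without the existence half, the admissible set is merely some divisor-closed set of integers, a priori something like $\{1,2,3\}$; then the $2$nd root and the $3$rd root would generate two distinct maximal cyclic subgroups, and uniqueness fails. What is actually needed is that any two roots of $g$ generate a common cyclic group. The paper achieves this by exhibiting one free abelian subgroup $G=\rho_{\sigma,m}^{-1}\bigl(\prod_j C_{\Gamma_{l_j}}(f_{l_j})\bigr)$ containing \emph{every} root --- abelian by Ivanov's criterion (Lemma \ref{lem-abelian}) --- inside which the unique maximal cyclic overgroup is immediate. A short repair of your route: if $f_1^{n_1}=f_2^{n_2}=g$, write $n_i=d a_i$ with $d=\gcd(n_1,n_2)$; the Bonatti--Paris uniqueness of roots (Theorem \ref{f^m=g^m}) gives $f_1^{a_1}=f_2^{a_2}$, hence $f_2\in C_{\Gamma_m(S)}(f_1^{a_1})=C_{\Gamma_m(S)}(f_1)$ by Lemma \ref{lem-n(f)=n(fk)}, and two commuting elements of a torsion-free group sharing a nontrivial common power generate a rank-one, hence cyclic, group.

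Two secondary inaccuracies. The centralizer of a pseudo-Anosov class in $\Gamma(S_i)$ is \emph{not} virtually cyclic when $S_i$ has boundary, since the boundary Dehn twists are central; McCarthy's theorem applies only after corking, i.e.\ in $\Gamma(\widehat{S}_i,\mathcal{Q}_i)$, which is why the paper routes the components through the corking homomorphism and treats $\partial S\neq\emptyset$ for $S$ itself as a separate case. Likewise, the image of the cutting homomorphism on $\Gamma_m(S)_{\sigma}$ is a torsion-free subgroup of $\prod_i\Gamma(\widehat{S}_i,\mathcal{Q}_i)$ rather than literally $\prod_i\Gamma_m(S_i)$; this does not change the shape of your argument but should be stated correctly.
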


 We emphasize that Degrijse and Petrosyan proved in \cite{Dieter} the finiteness of $\gdvc \Gamma(S)$ by other methods. In this work we outline a method that gives a precise description
 of a model for the classifying space for virtually cyclics of mapping class groups. It depends on certain Teichm\"uller spaces and mapping class groups of subsurfaces of $S$.
\vspace{0.3cm}

 Acknowledgment:  A. Trujillo would like to thank John Guaschi  for many fruitful conversations.

\section{Classifying Spaces for Families} \label{sec-classif}

Let $G$ be a group. A \emph{family} $\fo$ of subgroups of $G$ is a set of subgroups of $G$ which is closed under conjugation and taking subgroups.   The  following are natural examples of families of $G$: \begin{align*}
  \{1\}&=\text{the trivial subgroup} ;\\
  \fin_G &=  \text{finite subgroups of $G$};\\
  \vc_G &=\text{virtually cyclic subgroups of $G$};\\
  \all_G &=  \text{all subgroups of $G$}.
\end{align*}
\begin{defi}Let $\fo$ be a family of subgroups of $G$. A model of the \textit{classifying space $E_{\fo}G$ for the family $\fo$} is a $G$-CW-complex $X$, such that all of its isotropy groups  belong to $\fo$ and if $Y$ is a  $G$-CW-complex  with isotropy groups belonging to $\fo$, there is precisely one $G$-map $Y \rightarrow X$ up to $G$-homotopy.
\end{defi}
In other words, $X$ is a terminal object in the  category of $G$-CW complexes with isotropy groups belonging to $\fo$.
In particular, two models for $E_{\fo}G$ are $G$-homotopy equivalent.
\begin{defi}
Let $G$ be a group, $H\subseteq G$ and $X$  a  $G$-set.  The \emph{$H$-fixed point set}  $X^H$ is  defined as
\begin{align*}
X^H =\{x\in X \mid \text{ for all } h\in H, \; h\cdot x =x  \; \}.
\end{align*}
\end{defi}
\begin{thm} \cite[Thm. 1.9]{luck} \label{thm-esp-clas-defi}
A $G$-CW-complex $X$ is a model of $E_{\fo}G$ if and only if the $H$-fixed point set $X^H$ is contractible for $H \in \fo$ and is empty for $H \not\in \fo$.
\end{thm}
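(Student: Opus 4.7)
The plan is to prove both implications via the equivariant Whitehead theorem, probing $X$ with the simple test $G$-CW-complexes $G/H$ and $G/H\times S^n$. The single ingredient that carries all the weight is the natural identification
\[
\mathrm{Map}_{G}(G/H\times Z,X)\;\cong\;\mathrm{Map}(Z,X^H),
\]
valid whenever $G$ acts trivially on $Z$: a $G$-map out of $G/H\times Z$ is determined by its restriction to $\{eH\}\times Z$, and this restriction must land in $X^H$. Combined with the fact that the fixed-point set of a $G$-CW-complex inherits a CW-structure, this lets us read the homotopy type of $X^H$ directly from the universal property of $X$, and conversely build $G$-maps into $X$ cell by cell.

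For the forward direction, assume $X$ is a model for $E_{\fo}G$. If $H\notin\fo$ and $x\in X^{H}$, then $H\subseteq G_x\in\fo$, contradicting that $\fo$ is closed under subgroups, so $X^{H}=\emptyset$. For $H\in\fo$, both $G/H$ and $G/H\times S^n$ are $G$-CW-complexes with all isotropy in $\fo$ (isotropy groups are conjugates of $H$), so by the universal property there is a $G$-map $G/H\to X$ (which provides a point of $X^H$) and there is exactly one $G$-homotopy class of $G$-maps $G/H\times S^n\to X$. Via the adjunction this translates to $\pi_0\,\mathrm{Map}(S^n,X^H)=*$ for every $n\ge 0$, which forces $X^H$ to be nonempty, path-connected, and to satisfy $\pi_n(X^H,x_0)=0$ for every $n\ge 1$ and every basepoint $x_0$. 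Since $X^H$ is a CW-complex, Whitehead's theorem yields contractibility.

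For the converse, suppose $X^H$ is contractible for $H\in\fo$ and empty for $H\notin\fo$. Every isotropy group of $X$ lies in $\fo$ because $x\in X^{G_x}$ forces $X^{G_x}\ne\emptyset$. To verify the universal property, take any $G$-CW-complex $Y$ with isotropy in $\fo$ and construct a $G$-map $Y\to X$ by induction on equivariant skeleta: extending across an equivariant cell $G/H\times D^n$ amounts, under the adjunction, to extending a given map $S^{n-1}\to X^H$ over $D^n$, which is possible since $\pi_{n-1}(X^H)=0$; applying the same argument one dimension higher to $Y\times I$ shows any two such $G$-maps are $G$-homotopic. The main obstacle is not conceptual but bookkeeping: one has to turn the unpointed statement $\pi_0\,\mathrm{Map}(S^n,X^H)=*$ into vanishing of $\pi_n(X^H,x_0)$ for every choice of basepoint, and one must verify that the CW-structure on $X^H$ is compatible enough with the equivariant cell structure of $Y$ for the obstruction-theoretic extension step to apply; once these standard technicalities are in place, the characterization follows.
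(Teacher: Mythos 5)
The paper offers no proof of this statement: it is imported verbatim from the cited survey of L\"uck, so there is nothing internal to compare against. Your argument is correct and is the standard proof of this characterization --- the adjunction $\mathrm{Map}_G(G/H\times Z,X)\cong \mathrm{Map}(Z,X^H)$, closure of $\fo$ under subgroups and conjugation, Whitehead's theorem applied to the induced CW-structure on $X^H$, and skeleton-by-skeleton extension for the converse --- and the two bookkeeping points you flag (passing from free to based homotopy classes, and the CW-structure on fixed-point sets) are genuinely routine and close as you indicate.
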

The smallest possible dimension of a model of $E_{\f}G$ is called the \textit{geometric dimension of $G$
 for the family $\f$ } and is usually denoted as $\gd_{\f}G$. When a finite dimensional model of $E_{\f}G$ does not exist, then $\gd_{\f}G=\infty$. \\
We abbreviate $\efin:=E_{\fin_G}G$ and call it the \textit{universal} $G$-CW-\textit{complex for proper $G$-actions}, and we abbreviate $\evc:=E_{\vc_G}G$. Denote by $\gdf G=\gd_{\fin_G}G$ and $\gdvc G=\gd_{\vc_G}G$.  
 It is known that for any groups $H_1,H_2$,
\begin{align}\label{efin-product}
\gdf(H_1\times H_2)\leq \gdf H_1  + \gdf H_2.
\end{align}
For a subgroup $H\subseteq G$, and a  family $\f$ of $G$, denote by
$$\fo \cap H=\{ \text{subgroups of $H$ belonging to }\fo  \}, $$
 the family of subgroups of $H$ induced from $\fo$.
 A model of $E_{\fo\cap H}H$ is given by restricting the action of $G$ to $H$ in  a model of $E_{\fo}G$. Then 
 \begin{align}\label{gd-subg}
 \gd_{\f\cap H}H\leq \gd_{\f}G. 
 \end{align} 


\subsection{Constructing models from models for smaller families} \label{section-build}

We will use the construction given by  W. L\"uck and M. Weiermann  in  \cite{luck-weiermann}.  In particular for the families $\fin\subset\vc$, 
it is as follows: Let $\vci_G=\vc_G-\fin_G$ be the collection of infinite virtually cyclic subgroups of $G$. Define an equivalence relation, $\sim$, on $ \vci_G$  as
   \begin{equation} \label{rel-eq}
   V\sim W \:\:\Longleftrightarrow  |V \cap W |= \infty ,
   \end{equation}
for $V$ and $W$ in $\vci_G$, where
$|\star|$ denotes the cardinality of the set $\star$.
Let $[\vci_G]$ denote the set of equivalence classes under the above relation
and  let $[H] \in [\vci_G ]$  be the equivalence class of $H$. Define 
\begin{align}\label{nh}
 \nh &=\{ g\in G \mid |g^{-1} H g \cap H|= \infty \},
 \end{align}
 this is the isotropy group of $[H]$ under the $G$-action on $[\vci_G]$ induced by conjugation. Observe that $N_G[H]$ is the commensurator of the subgroup $H \subseteq G$. 
Define a family of subgroups of $\nh$ by
\begin{align} \label{gh}
  \gh:=\{K \in \vci_{\nh}  \mid  |K\cap H|= \infty \} \cup \fin_{ \nh}.
\end{align}

The method to build a model of $\evc$ from one of $\efin$ is given  in the following theorem.

\begin{thm}\cite[Thm.2.3]{luck-weiermann} \label{thm-luck-weierm}
Let $\vci_G$ and $\sim$ be as above. Let $I$ be a complete system of representatives, $[H]$, of the $G$-orbits in $[\vci_G]$ under the $G$-action coming from conjugation. Choose arbitrary $N_G[H]$-CW-models for $\underline{E}(N_G[H])$,  $E_{\g[H]}(N_G[H])$ and an arbitrary $G$-CW-model for $\underline{E}(G)$. Define $X$ a $G$-CW-complex by the cellular $G$-pushout
\begin{align*}
\xymatrix{
                \coprod_{[H]\in I} G \times_{N_G[H]} \underline{E}(N_G[H])
                \ar[d]^{\coprod_{[H]\in I} id_G \times_{N_G[H]}f_{[H]}}
                \ar[r]^(0.75){i}
                &\underline{E}(G)
                \ar[d]
                \\
                \coprod_{[H]\in I} G \times_{N_G[H]} E_{\g[H]}(N_G[H])
                \ar[r]
                & X
}
\end{align*}
such that $f_{[H]}$ is a cellular $N_G[H]$-map for every $[H]\in I$ and $i$ is an inclusion of $G$-CW-complexes, or such that every map $f_{[H]}$ is an inclusion of $N_G[H]$-CW-complexes for every $[H]\in I$  and $i$ is a cellular $G$-map. 
Then $X$ is a model for $E_{\g}(G).$
\end{thm}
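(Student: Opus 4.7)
The plan is to apply Lück's criterion (Theorem~\ref{thm-esp-clas-defi}): to show $X$ is a model for $E_{\g}(G)$ it suffices to verify that the fixed-point set $X^V$ is contractible when $V \in \vc_G$ and empty when $V \notin \vc_G$. Because the pushout is cellular with one leg an inclusion of $G$-CW-complexes, the $V$-fixed point functor preserves it, so $X^V$ is itself the pushout obtained by applying $(-)^V$ to each corner. The first reduction I would make is to rewrite each corner via the standard identification
\begin{align*}
(G \times_K Y)^V \;=\; \coprod_{gK \,:\, g^{-1}Vg \subseteq K} Y^{g^{-1}Vg},
\end{align*}
which expresses each coproduct on the left of the pushout as a disjoint union indexed by cosets $gN_G[H]$ with $g^{-1}Vg\subseteq N_G[H]$.

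The proof then splits into three cases according to the nature of $V$. First, if $V$ is not virtually cyclic then $V$ is in particular infinite, so $\underline{E}(G)^V = \emptyset$, and no conjugate $g^{-1}Vg$ lies in $\fin$ or in $\g[H]$ since conjugation preserves being virtually cyclic; every corner thus has empty fixed points and $X^V = \emptyset$. Second, if $V$ is finite then $\underline{E}(G)^V$ is contractible, and both left corners decompose as disjoint unions over the same index set of cosets $gN_G[H]$; the left vertical map carries each contractible component $\underline{E}(N_G[H])^{g^{-1}Vg}$ to the contractible component $E_{\g[H]}(N_G[H])^{g^{-1}Vg}$ by a cellular homotopy equivalence, and gluing $\underline{E}(G)^V$ along this equivalence yields a contractible $X^V$.

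The heart of the argument is the case $V \in \vci_G$. Here $\underline{E}(G)^V = \emptyset$ because $V$ is infinite, and every term in the top-left coproduct also vanishes since $V$ infinite forces $g^{-1}Vg \notin \fin_{N_G[H]}$. In the bottom-left, a coset $gN_G[H]$ contributes precisely when $g^{-1}Vg \in \g[H]$, which, since $V$ is infinite virtually cyclic, is equivalent to $|g^{-1}Vg \cap H| = \infty$, i.e.\ $g \cdot [H] = [V]$ in $[\vci_G]$. Uniqueness of orbit representatives chosen in $I$ singles out exactly one $[H]\in I$, and the description of $N_G[H]$ from \eqref{nh} as the $G$-stabilizer of $[H]$ pins down the coset $gN_G[H]$. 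Hence the pushout for $X^V$ collapses to the single contractible space $E_{\g[H]}(N_G[H])^{g^{-1}Vg}$, completing the verification.

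The main obstacle I expect is precisely this last identification in the infinite virtually cyclic case: one must simultaneously exploit the equivalence relation $\sim$ on $\vci_G$, the uniqueness of orbit representatives in $I$, and the commensurator description of $N_G[H]$ as the stabilizer of $[H]$ to collapse what is a priori a large disjoint union into a single contractible component. The finite-$V$ case is conceptually easier but does invoke the standard technical fact that a cellular pushout along a homotopy equivalence of CW-pairs preserves the homotopy type, which is what turns the contractibility of $\underline{E}(G)^V$ into the contractibility of $X^V$.
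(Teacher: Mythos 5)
The paper does not prove this statement; it is quoted verbatim from L\"uck--Weiermann \cite[Thm.~2.3]{luck-weiermann}, so there is no internal proof to compare against. Your argument is essentially the original proof from that source: verify via Theorem~\ref{thm-esp-clas-defi} that $X^{V}$ is empty or contractible case by case, using that fixed points commute with a cellular $G$-pushout along a cofibration and the decomposition $(G\times_{K}Y)^{V}=\coprod_{gK:\,g^{-1}Vg\subseteq K}Y^{g^{-1}Vg}$, and it is correct. One small point worth making explicit in the crucial case $V\in\vci_G$: for a coset to appear in the coproduct at all you need $g^{-1}Vg\subseteq N_G[H]$, and this containment is automatic once $|g^{-1}Vg\cap H|=\infty$ (any two infinite subgroups of a virtually cyclic group have infinite intersection, so every element of $g^{-1}Vg$ commensurates $H$); with that observation the nonempty components are indexed exactly by the pairs $([H],gN_G[H])$ with $g\cdot[H]=[V]$, of which there is precisely one, as you claim.
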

The maps in Theorem \ref{thm-luck-weierm} are given by the universal property of  classifying spaces for families and  inclusions of families of subgroups. Observe that if $\gdf G$ is finite and both $\gdf N_G[H]$ and $\gd_{\gh} N_G[H]$ are uniformly bounded, then $\gdvc G$ is finite.  
\begin{rem}
Observe that if  $H,K\in \vci_G$ and  both $C_{H}$ and $C_K$ are  infinite cyclic subgroups of $H$ and $K$ respectively, let $\sim$ as in (\ref{rel-eq})
\begin{align}
 H\sim K & \;\;\text{ if only if  }\;\; C_H \sim C_K,   \label{sim-ic}\;\; \text{ and} \\
 \nh \; &=\{ g\in G \mid |g^{-1} H g \cap H|= \infty \}\nonumber \\
 &=\{g\in G \mid |g^{-1}C_H g \cap C_H|=\infty \}. \label{nh=nc}
 \end{align}
Let $\C_G$ be the set of infinite cyclic subgroups $C$ of $G$.  Then the equivalence relation $\sim$ given in (\ref{rel-eq}) can be defined in $\C_G$, we will denote by $[\C_G]$ the set of equivalence classes.
\end{rem}

\section{Mapping Class groups} \label{sec-mcg}

Let   $S$ be  an orientable compact surface with a finite set $\mathcal{P}$, called punctures, of points removed from
the interior.  We will assume that the surface has negative Euler characteristic. References for this section are  \cite{margalit}, \cite{ivanov} and \cite{Thurston's-work}.

Let  $\h(S,\partial S)$ denote the group of orientation preserving difeomorphisms of $S$ that restrict to the identity on  the boundary  $\partial S$.  We endow this group  with the  compact-open topology.
\begin{defi}
 The \textit{mapping class group of} $S$, denoted $\mcg$, is the group
$$
 \mcg=\pi_0(\dif(S,\partial S)),
$$
that is,  the group of (smooth) isotopy classes of elements of $\dif(S,\partial S)$ where isotopies are required to fix the boundary pointwise.  
\end{defi}

\emph{Congruence subgroups. }  Let $m \in \z$, $m>1$. We  denote by $\mcgm$ the kernel of the natural homomorphism
$$\mcg \to Aut(H_1(S,\z/m\z))$$
defined by the action of diffeomorphisms on the homology group, $\mcgm$ is called the congruence subgroup of $\mcg$.  Note that this subgroup  has
finite index in $\mcg$.\\

\emph{Complex of curves. } An \textit{essential} curve is a simple closed curve of $S$ that is not homotopic
to a point, a puncture, or a boundary component.
The \textit{complex of curves}, denoted by $\cs$, is the abstract simplicial complex  associated to $S$ such that,
(i) Vertices are isotopy classes of essential curves, we denote by $V(S)$ the set of vertices;
(ii) $\cs$ has a $k$-simplex for each $(k+1)$-tuple of vertices, where each pair of corresponding isotopy classes have disjoint representants.  
The \textit{realization} of a simplex is the union of mutually disjoint curves that represent its vertices.
The mapping class group acts on $V(S)$: if $f\in \mcg, \, \alpha \in V(S)$,
the action is given by $f\cdot\alpha =f(\alpha)$. Then $\mcg$ acts on $\cs$, since this action sends simplicies into simplicies.\\

\emph{Dehn twists. }  Let $\alpha, \beta$ be isotopy classes of simple closed curves in $S$. We will denote the Dehn twist about $\alpha$ as $T_{\alpha}$. Let $f\in \mcg$ and $j,k\in \z-\{0\}$. 
We have the following properties  of   Dehn twists, see  \cite[Sec. 3.3]{margalit}, 
\begin{enumerate}
\item $T_{\alpha}^j=T_{\beta}^k$ iff $\alpha=\beta$ and $j=k$;
\item  $fT_{\alpha}^jf^{-1}=T_{f(\alpha)}^j$;
\item  $T_{\alpha}^jT_{\beta}^k=T_{\beta}^kT_{\alpha}^j $ iff $i(\alpha, \beta)=0$. 
\end{enumerate}
\subsection{Classification of elements in $\mcg$} \label{sec-prelim}

We will first assume  that  $S$ has empty boundary. 
We assume   the reader  is familiar with the theory of transverse singular foliations. See \cite{margalit} or \cite{Thurston's-work}  for references. \\ 

\emph{Pseudo-Anosov diffeomorphisms. } 
A \textit{diffeomorphism} $\phi\in \dif(S)$  is called \textit{pseudo-Anosov} if there exists a pair of transverse measured foliations $(\f^s,\mu^s)$, $(\f^u,\mu^u)$ of $S$ and a real number $\lambda >1$ such that
 \begin{itemize}
 \item[i.] $\phi(\f^s,\mu^s)=(\f^s, \lambda^{-1}\mu^s); \text{ and } \phi(\f^u,\mu^u)=(\f^u, \lambda \mu^u). $
 \item[ii.] the 1-prongs singularities of these foliations belong to  the set of punctures.
 \end{itemize}
The measure foliation $(\f^s,\mu^s)$ is called the \textit{stable foliation for $\phi$} and $(\f^u,\mu^u)$ is called the \textit{unstable foliation for $\phi$}, and $\lambda$ is the dilatation of $\phi$.
\begin{defi}
An element $f\in \mcg$ is called \textit{pseudo-Anosov} if it is represented by a pseudo-Anosov diffeomorphism. 
\end{defi}

\begin{defi}
  An element $f$ is called \textit{reducible} if  $f$ fixes some simplex of $\cs$  and \textit{irreducible} otherwise.  
\end{defi}
Among the irreducible elements, those of finite order are periodic and those of infinite order are pseudo-Anosov.
There is the following  classification theorem  for elements of the mapping class group,  see  \cite[Thm. 13.2]{margalit}. 
\begin{thm}\textsc{(Nielsen-Thurston classification)} \label{classif-mcg}
  Let  $g,n \geq 0$. Let  S be an orientable surface of genus $g$  and $n$ punctures. 
Each $f\in \mcg$ is either periodic, reducible, or pseudo-Anosov. Further,
pseudo-Anosov mapping classes are neither periodic nor reducible.
A periodic element is represented by a finite order diffeomorphism. 
\end{thm}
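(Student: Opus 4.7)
The plan is to first reduce the statement to a question about maximal \emph{infinite cyclic} subgroups. For $m \geq 3$ the congruence subgroup $\mcgm$ is torsion-free: this follows from Serre's lemma together with the fact (Ivanov) that elements of $\mcgm$ are pure mapping classes and that pure mapping classes of finite order are trivial. Since every torsion-free infinite virtually cyclic group is isomorphic to $\z$ (the torsion-free condition rules out the infinite-dihedral type and forces the finite kernel in the $\z$-by-finite case to be trivial), every infinite virtually cyclic subgroup of $\mcgm$ is in fact infinite cyclic. It therefore suffices to show that each infinite cyclic subgroup $C = \langle g\rangle\subseteq \mcgm$ is contained in a unique maximal infinite cyclic subgroup of $\mcgm$.

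I would then apply Proposition \ref{intro-prop-comm} taking $n=1$: this provides a cyclic subgroup $D=\langle g_D\rangle\subseteq \mcgm$ that contains $C$, is maximal infinite cyclic in $\mcgm$, and satisfies $N_{\mcg}[C]=N_{\mcg}(D)$. Intersecting with $\mcgm$ yields $N_{\mcgm}[C]=N_{\mcgm}(D)$. Now let $D'=\langle h\rangle$ be any maximal infinite cyclic subgroup of $\mcgm$ containing $C$. Since $D'$ is commensurable with $C$, we obtain $h\in N_{\mcgm}[C]=N_{\mcgm}(D)$, and hence $hg_D h^{-1}=g_D^{\varepsilon}$ for some $\varepsilon\in\{\pm 1\}$.

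Because $g\in D'=\langle h\rangle$, we may write $g=h^n$ for some $n\in\z$, so $h$ commutes with $g$. Writing also $g=g_D^{s}$ with $s\neq 0$, the equation $hg_D^{s}h^{-1}=g_D^{s}$ combined with $hg_D h^{-1}=g_D^{\varepsilon}$ gives $g_D^{(\varepsilon-1)s}=1$, which forces $\varepsilon=1$ since $g_D$ has infinite order. Hence $h$ centralizes $g_D$, so $\langle h,g_D\rangle$ is a finitely generated abelian subgroup of the torsion-free group $\mcgm$; it is therefore free abelian, and the relation $h^n=g_D^{s}$ forces its rank to be one. Thus $\langle h,g_D\rangle$ is infinite cyclic, and because it contains the maximal infinite cyclic $D$, it must coincide with $D$. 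In particular $h\in D$, so $D'\subseteq D$, and maximality of $D'$ forces $D'=D$. This establishes uniqueness of the maximal infinite cyclic subgroup containing $C$.

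The main obstacle is of course Proposition \ref{intro-prop-comm} itself, whose proof requires a detailed case analysis via the Nielsen--Thurston classification (Theorem \ref{classif-mcg}): for pseudo-Anosov generators one invokes McCarthy's theorem that the commensurator is virtually infinite cyclic, while in the reducible case one must work inside the stabilizer of the canonical reduction system and exploit the structure of centralizers of pure reducible mapping classes together with the behaviour of Dehn twist coordinates. Once Proposition \ref{intro-prop-comm} is available, the deduction above is essentially a clean consequence of torsion-freeness of $\mcgm$, the only remaining subtlety being the verification that $\mcgm$ is torsion-free for $m\geq 3$, which rests on purity of its elements.
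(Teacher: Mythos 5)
Your proposal does not prove the statement in question. The statement is the Nielsen--Thurston classification theorem: every element of $\Gamma(S)$ is periodic, reducible, or pseudo-Anosov, with pseudo-Anosov classes being neither of the other two. What you have written is instead an argument for an entirely different result, namely that every infinite (virtually) cyclic subgroup of $\Gamma_m(S)$ is contained in a unique maximal one --- essentially Proposition \ref{prop-modm-maximality} of the paper. Nothing in your argument addresses the trichotomy of mapping classes; indeed you explicitly \emph{invoke} Theorem \ref{classif-mcg} partway through (``a detailed case analysis via the Nielsen--Thurston classification''), so even if one tried to read your text as a proof of that theorem it would be circular.

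For the record, the paper offers no proof of this statement either: it is quoted as a classical result of Thurston, cited as Theorem 13.2 of Farb--Margalit. A genuine proof would require the machinery of measured foliations and the compactification of Teichm\"uller space (or an equivalent route such as Bers' proof via extremal quasiconformal maps), none of which appears in your proposal. If your intent was to prove the uniqueness-of-maximal-cyclic-subgroups statement, you should match it to the correct target; as a proof of the Nielsen--Thurston classification it is a non-starter.
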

\emph{Surfaces with boundary. }
If $ S$ has  non-empty boundary,   we define $f\in \mcg$ to be pseudo-Anosov if $f$ restricts to a pseudo-Anosov diffeomorphism on the punctured surface obtained by removing the boundary $\partial S$. \\


\subsection{Induced homomorphisms from inclusions } \label{sec-homom-cota-k-r}
Let $S$ be an orientable closed surface with finitely many punctures. 
Let $S'$ be an orientable compact subsurface of $S$, the inclusion $S'\to S$ induces a natural homomorphism 
\begin{align} \label{homom-injection}
  \eta \colon \Gamma(S') \to \mcg,
\end{align}
let $g\in \Gamma(S')$ and $\psi\in \h(S',\partial S')$ be a representative difeomorphism of $g$. Then $\eta(g)$ is defined as the isotopy class of the difeomorphism which coincides with $\psi$ in $S'$ and is the identity in $S-S'$.  
\begin{figure}
\begin{center}
\includegraphics[scale=0.35]{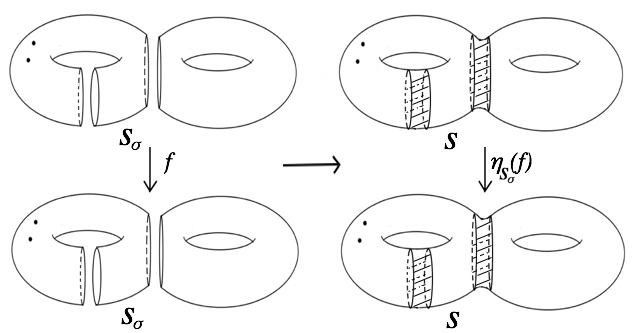}
\caption{The homomorphism $\eta_{S_{\sigma}}(f)$ is the identity  on the  shaded regions and coincides with $f$ on the unshaded regions. }
\label{fig-homom-inclusion}
\end{center}
\end{figure}

Let  $\sigma \in \cs$ with vertices $\alpha_1,...,\alpha_r$ and  $C$ be its realization  in $S$.  Let $N_{\sigma}$ be an open regular   neighborhood of $C$ in  $S$ and denote by  $S_{\sigma}=S-N_{\sigma}=S_1\cup \cdots \cup S_k$, where each $S_i$ is a  connected component.
Let $\beta_i$ and $\gamma_i$ denote  the two boundary components of $N_{\sigma}$ that are isotopic to $\alpha_i$ in S.  
\begin{rem}\label{rem-euler-k-r}
Suppose that the surface $S$ has genus $g$ and $n$ punctures. Note that $\chi(S_{\sigma})=\chi(S)$ and $\chi(S_i)\leq -1$,  therefore $k\leq -\chi(S)$ and $r\leq \frac{-3 \chi(S)-n}{2} $, see \cite{margalit} page 249.  
\end{rem}
See \cite[Thm. 3.18]{margalit} as a reference for the following homomorphisms.  \\

\emph{Cutting the surface. }   From the inclusions $\;\ss\to S\;$ and $\;S_i\to S\;$  denote the induced homomorphisms by  
\begin{align}
\eta_{\ss}\colon \Gamma(\ss)\to \mcg, \;\;\;\; \eta_{S_i}\colon \Gamma(S_i)\to \mcg,
\end{align}
  with
$
\ker( \eta_{\ss})=\langle T_{\beta_1}T_{\gamma_1}^{-1},...,T_{\beta_r}T_{\gamma_r}^{-1}\rangle
$.  Observe that $\Gamma(\ss)=\prod_{i=1}^k\Gamma(S_i)$ and any element in the image $\eta_{\ss}(\Gamma(\ss))$ leaves each subsurface $S_i$ of $S$ invariant. \\

\emph{Corking. } We glue a 1-punctured disk in each boundary component  of $S_{\sigma}$, that is, we corked all boundary components, and
denote this surface  by $\widehat{\ss}=\widehat{S_1}\cup \cdots \cup \widehat{S_k}$, where the $\widehat{S_i}$ are the  connected components of $\widehat{\ss}$ (see Figure 
\ref{fig-corking}). 
Note that the surface $\ss$  has the same Euler characteristic as $S$. 

From the inclusion $\ss \to \widehat{\ss}$, the induced homomorphism defined as   (\ref{homom-injection}) is called the \textit{corking homomorphism of $\ss$}, and it is denoted by 
\begin{align}\label{corking-h}
\theta_{\ss}\colon \Gamma(\ss)\to \mcgss,
\end{align}
 with $\ker (\theta_{\ss})=\langle T_{\beta_1},...,T_{\beta_r},T_{\gamma_1},...,T_{\gamma_r}\rangle$. 
 Let $\mathcal{Q}_i$ be the set of  punctures  in $\widehat{S_i}$ coming from boundary components of $S_i$.  Note that  $\theta_{S_i}(\Gamma(S_i))=\mcgsiqi$ is the subgroup of $\Gamma(\widehat{S_i})$ that fixes pointwise all $p\in \mathcal{Q}_i$.    
Since $\Gamma(\ss)=\prod_{i=1}^k\Gamma(S_i)$ and 
 $\theta_{\ss}=\Pi_{i=1}^k\theta_{S_i}$, then 
\begin{align}\label{corking-homom}
\theta_{\ss} \colon \Gamma(\ss)\to  \prod_{i=1}^k    \Gamma(\widehat{S}_i,\mathcal{Q}_i).  
 \end{align} 
Note that $\prod_{i=1}^k \Gamma(\widehat{S}_i,\mathcal{Q}_i)$ can be seen as  the subgroup of $\mcgss$ that  fixes each subsurface $\widehat{S_i}$ and fixes  pointwise the punctures in $\mathcal{Q}=\cup_{i=1}^k \mathcal{Q}_i$. 

\begin{figure}
\begin{center}
\includegraphics[scale=0.24]{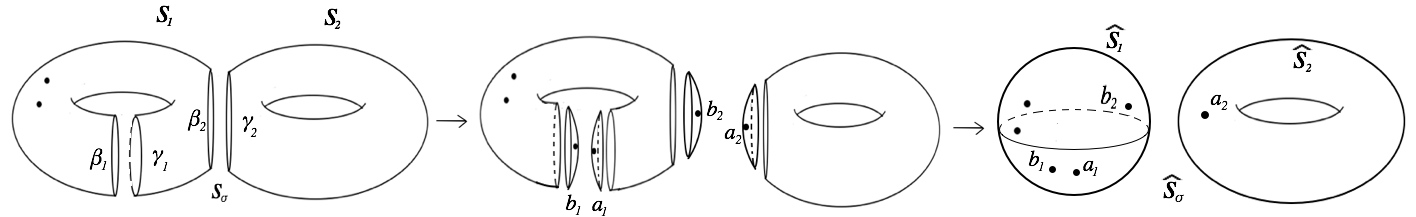}
\caption{Corking each boundary component with a 1-punctured disc. }
\label{fig-corking}
\end{center}
\end{figure}

\subsection{Pure elements }
 
Let $S$ be an orientable compact surface with finitely many punctures.  
A diffeomorphism $\psi \in \h(S)$ is called \emph{pure} if there exists a $1$-submanifold $C$ (possibly empty) of $S$ such that
the following  conditions are satisfied:\begin{enumerate}
 \item[(P)] $C$  is the realization of an element $\sigma\in \cs$ or $C$ is empty; if $C\neq \emptyset$,  $\psi$ fixes $C$, it does not rearrange the components of $S-C$, and it induces on each component of $S-C$ a diffeomorphism isotopic to either a pseudo-Anosov or the identity diffeomorphism.
 \end{enumerate} 
We call an element $f \in \mcg$ \textit{pure} if the isotopy class of $f$ contains a pure diffeomorphism. 
We call $H\subseteq \mcg$ pure if $H$ consists of pure elements.  

\begin{thm}\cite[Cor. 1.5 and 1.8]{ivanov} \label{thm-modm-pure}
If $m \geq 3$, then $\mcgm$ is a torsion free group and  a pure subgroup of $\mcg$. 
\end{thm}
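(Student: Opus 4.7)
The plan is to exploit the hypothesis $m\geq 3$, which by Theorem \ref{thm-modm-pure} makes $\mcgm$ torsion-free and pure. Since every torsion-free virtually cyclic group is infinite cyclic, the problem reduces to showing that every infinite cyclic subgroup $\langle g\rangle\subseteq\mcgm$ is contained in a unique maximal infinite cyclic subgroup. I would prove this by establishing (i) a commutation lemma yielding uniqueness, and (ii) a divisibility bound yielding existence; both ingredients use the Nielsen--Thurston classification (Theorem \ref{classif-mcg}) to split into the pseudo-Anosov and the pure reducible cases.

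For the commutation lemma I would show that if $a,b\in\mcgm$ satisfy $a^p=b^q$ for some nonzero $p,q$, then $ab=ba$. Since the Nielsen--Thurston type of a pure element is preserved under nonzero powers, $a$ and $b$ are either both pseudo-Anosov or both pure reducible with the same canonical reduction system $\sigma$. In the pseudo-Anosov case, $a$ and $b$ share their invariant measured foliations with $a^p=b^q$, so they both lie in the stabilizer of the foliation pair, which is virtually cyclic in $\mcg$ by a classical result; torsion-freeness of $\mcgm$ makes this stabilizer cyclic, forcing $ab=ba$. In the pure reducible case, I would use the cutting and corking homomorphisms of Section \ref{sec-homom-cota-k-r} to decompose $a$ and $b$ into a product of commuting Dehn twists along the components $\alpha_1,\ldots,\alpha_r$ of $\sigma$ and pseudo-Anosov or trivial restrictions $a_j,b_j$ on each complementary piece $\widehat{S_j}$. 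The equation $a^p=b^q$ forces $p\,k_i(a)=q\,k_i(b)$ on twist exponents and $a_j^p=b_j^q$ on each piece, and the pseudo-Anosov subcase (applied inside each pure group $\Gamma(\widehat{S_j},\mathcal{Q}_j)$) gives $a_jb_j=b_ja_j$; since all factors then commute pairwise, $ab=ba$.

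Uniqueness of the maximal cyclic is now immediate: if $\langle h_1\rangle,\langle h_2\rangle$ are two maximal infinite cyclic subgroups containing $\langle g\rangle$, then $h_1^{n_1}=g=h_2^{n_2}$ with $n_1,n_2\neq 0$, so the commutation lemma gives $h_1h_2=h_2h_1$. The subgroup $\langle h_1,h_2\rangle$ is then a finitely generated torsion-free abelian subgroup of $\mcgm$, hence free abelian by Birman--Lubotzky--McCarthy, and of rank one since $h_1^{n_1}=h_2^{n_2}$ is a nontrivial relation; it is therefore infinite cyclic, and maximality of $\langle h_1\rangle$ and $\langle h_2\rangle$ forces $\langle h_1\rangle=\langle h_1,h_2\rangle=\langle h_2\rangle$. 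For existence, I would bound the divisibility of $g$ inside $\mcgm$: in the pseudo-Anosov case Thurston's dilatation satisfies $\lambda(h^n)=\lambda(h)^n$ and is uniformly bounded below on $\mcg$ by some $\lambda_0>1$, so any factorization $g=h^n$ forces $n\leq\log\lambda(g)/\log\lambda_0$; in the pure reducible case, a nonzero Dehn-twist coefficient $k_i(g)$ bounds $n$ via $n\mid k_i(g)$, and if all $k_i(g)=0$ then some pseudo-Anosov restriction on a complementary piece bounds $n$ by the same dilatation argument (inducting on $-\chi(S)$ if needed). Hence $\langle g\rangle$ has only finitely many infinite cyclic overgroups, yielding a maximal one.

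The main obstacle is the pure reducible half of the commutation lemma: it requires carefully aligning the cutting and corking short exact sequences, whose kernels involve twists $T_{\beta_i},T_{\gamma_i}$ about boundary-parallel curves, and verifying that the canonical reduction system really does coincide for $a$, $a^p$ and $b$, $b^q$ inside the pure group $\mcgm$. Once this geometric input is in place, the remainder is essentially algebraic bookkeeping on Nielsen--Thurston normal forms combined with standard facts about torsion-free abelian subgroups of mapping class groups.
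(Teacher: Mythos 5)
Your proposal does not prove the statement at hand. The statement asserts two facts about $\mcgm$ for $m\geq 3$: that it is torsion free, and that it is a pure subgroup of $\mcg$ (every element admits a pure representative in the sense of condition (P)). Your very first sentence \emph{assumes} both of these facts --- ``the hypothesis $m\geq 3$, which by Theorem \ref{thm-modm-pure} makes $\mcgm$ torsion-free and pure'' --- and then proceeds to prove a different result entirely, namely that every infinite cyclic subgroup of $\mcgm$ is contained in a unique maximal one. That is the content of Proposition \ref{prop-modm-maximality} (the $Max_{\vci_{\mcgm}}$ property), not of Theorem \ref{thm-modm-pure}. As an argument for the stated theorem it is circular, and as written it establishes neither torsion-freeness nor purity.

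For the record, the paper gives no proof of this statement: it is imported directly from Ivanov \cite{ivanov}, Corollaries 1.5 and 1.8. A genuine proof would have to work with the congruence condition itself --- the action on $H_1(S,\z/m\z)$ --- showing for instance that a finite-order mapping class acting trivially on $H_1(S,\z/m\z)$ with $m\geq 3$ must be trivial (via the Nielsen realization of finite-order classes and the fixed-point analysis of finite-order diffeomorphisms), and that an element of $\mcgm$ cannot permute the curves of its canonical reduction system nor the complementary subsurfaces, again because such a permutation would be detected in homology mod $m$. None of this appears in your write-up. If your goal was Proposition \ref{prop-modm-maximality}, your outline is broadly in the spirit of the paper's argument for that proposition (the paper also reduces to showing that all cyclic overgroups of a given $\langle f\rangle$ land in a common free abelian subgroup, built from $\ker(\rho_{\sigma,m})\simeq\z^s$ and the cyclic centralizers of the pseudo-Anosov components via Lemma \ref{lem-abelian}), but it should be submitted against that statement, not this one.
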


From now on, we will consider  the subgroup $\mcgm$ with $m \geq 3$.\\

\emph{Canonical reduction system. } Let $G \subseteq \mcgm$.  An isotopy class $\alpha\in V(S)$ is called an essential reduction class for $G$ if the following conditions are satisfied  (i) $g(\alpha)=\alpha$ for all $g\in G$; (ii) if $\beta\in \cs$ with $i(\alpha,\beta)\neq 0$, \footnote{ The geometric intersection number of $\alpha$ and $\beta$  is denoted by $i(\alpha,\beta)$. }  then $h(\beta)\neq \beta$ for some $h\in G$. The set of essential reduction classes of $G$ is a simplex of $\cs$ and it is called 
 \textit{a canonical  reduction system} of $G$ and it is denoted by $\sigma(G)$. In general, for a subgroup $H\subseteq \mcg$ define $\sigma(H):=\sigma(H\cap\mcgm)$  and for  $f\in \mcg$ define $\sigma(f):=\sigma(\langle f\rangle)$. 
\begin {lem}\cite[Sec. 7.2, 7.3]{ivanov}  \label{lem-red-syst} 
Let $f\in \mcg$,  and $G\subseteq \mcg$. \\   
 (i) If $H\unlhd G $ has finite index, then  $\sigma(G)=\sigma(H)$, \\
 (ii)  $\sigma(fGf^{-1})=f \sigma(G)$.
\end{lem}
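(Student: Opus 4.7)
The plan is to first reduce to infinite cyclic subgroups, then analyze their cyclic overgroups via the canonical reduction system and the cutting/corking decomposition of Section \ref{sec-homom-cota-k-r}. Since $\mcgm$ is torsion-free by Theorem \ref{thm-modm-pure}, every infinite virtually cyclic subgroup is in fact infinite cyclic: a torsion-free group containing $\z$ with finite index embeds into $\q$ (via $g \mapsto 1/n$ where $g^n$ equals a chosen generator of the $\z$-subgroup), and finitely generated subgroups of $\q$ are cyclic. Thus it suffices to show that each infinite cyclic $C = \langle g\rangle \subseteq \mcgm$ is contained in a unique maximal infinite cyclic subgroup.

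Let $\sigma := \sigma(g)$ be the canonical reduction system of $g$, with vertices $\alpha_1,\ldots,\alpha_r$. If $h\in\mcgm$ satisfies $g = h^n$ for some $n\geq 1$, then $\langle g\rangle$ has finite index in $\langle h\rangle$, so Lemma \ref{lem-red-syst}(i) gives $\sigma(h) = \sigma$. Hence every cyclic overgroup of $C$ shares the same reduction system $\sigma$, so all admit a common canonical decomposition. Writing $\widehat{\ss} = \widehat{S_1}\sqcup\cdots\sqcup\widehat{S_k}$, the purity of $h$ (Theorem \ref{thm-modm-pure}) yields an expression
$$h = T_{\alpha_1}^{f_1}\cdots T_{\alpha_r}^{f_r}\cdot \eta_{\ss}(\phi_1,\ldots,\phi_k),$$
where each $\phi_j\in\mcgsiqi$ is either trivial or pseudo-Anosov; writing $g$ analogously with exponents $e_i$ and components $\psi_j$, the relation $g = h^n$ translates component-wise to $e_i = n f_i$ and $\psi_j = \phi_j^n$, once one checks that the twist and component parts are independent modulo the kernels of the cutting/corking maps.

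For each pseudo-Anosov component $\psi_j$, the centralizer of $\psi_j$ in $\mcgsi$ is virtually infinite cyclic by classical results on pseudo-Anosov mapping classes (see \cite{ivanov}); intersecting with the torsion-free $\Gamma_m(\widehat{S_j})$ produces an infinite cyclic group, in which $\psi_j$ admits a unique primitive root $\tilde\psi_j$, and every root of $\psi_j$ is a power of $\tilde\psi_j$. Writing $\psi_j = \tilde\psi_j^{d_j}$ and combining with $n\mid e_i$, the admissible $n$'s divide a fixed positive integer $n_0$ equal to the $\gcd$ of the nonzero $e_i$'s together with the $d_j$'s. Because $g$ has infinite order, at least one of these data is nontrivial, so $n_0 < \infty$; the element $h_0$ corresponding to $n = n_0$ — with twist exponents $e_i/n_0$ and pseudo-Anosov components $\tilde\psi_j^{d_j/n_0}$ — then generates a cyclic subgroup containing every cyclic overgroup of $C$, giving both existence and uniqueness of the maximal. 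If $h_0\notin\mcgm$, replacing $h_0$ by its smallest power lying in $\mcgm$ (which exists since $\mcgm$ has finite index in $\mcg$) still yields a cyclic group containing $C$ and maximal in $\mcgm$, since any larger cyclic subgroup of $\mcgm$ would be contained in $\langle h_0\rangle$ by the analysis above.

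The main obstacle is the infinite cyclicity of pseudo-Anosov centralizers in the pure mapping class group, which rests on the classical work of Ivanov and McCarthy. A secondary technicality is the rigorous handling of the cutting/corking homomorphisms: one must verify that the twist exponents $e_i$ and the pseudo-Anosov components $\psi_j$ are well-defined by $g$ modulo the kernels $\langle T_{\beta_i}T_{\gamma_i}^{-1}\rangle$ and $\langle T_{\beta_i}, T_{\gamma_i}\rangle$ of the cutting and corking maps, and that the reassembled $h_0$ (or a suitable power thereof) ultimately lies inside the congruence subgroup $\mcgm$.
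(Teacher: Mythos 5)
Your proposal does not address the statement at hand. Lemma \ref{lem-red-syst} asserts two basic properties of the canonical reduction system $\sigma(\cdot)$: (i) that it is unchanged when passing to a finite-index normal subgroup, and (ii) that it is equivariant under conjugation, $\sigma(fGf^{-1})=f\sigma(G)$. What you have written is instead an argument that every infinite (virtually) cyclic subgroup of $\mcgm$ is contained in a unique maximal one, which is the content of Proposition \ref{prop-modm-maximality}, not of this lemma. Nothing in your text establishes either (i) or (ii); worse, you explicitly invoke Lemma \ref{lem-red-syst}(i) as an ingredient (``Lemma \ref{lem-red-syst}(i) gives $\sigma(h)=\sigma$''), so as a proof of the lemma itself the argument would be circular.

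For comparison, the paper offers no proof of this lemma either: it is quoted directly from Ivanov \cite[Sec. 7.2, 7.3]{ivanov}. A self-contained proof would proceed from the definition of an essential reduction class. Part (ii) is essentially formal: $\alpha$ satisfies conditions (i)--(ii) of the definition for $G$ exactly when $f\alpha$ satisfies them for $fGf^{-1}$, since $g(\alpha)=\alpha$ is equivalent to $(fgf^{-1})(f\alpha)=f\alpha$ and geometric intersection number is $\mcg$-invariant. Part (i) requires both inclusions $\sigma(G)\subseteq\sigma(H)$ and $\sigma(H)\subseteq\sigma(G)$; the nontrivial points are that a class fixed by all of $H$ need not be fixed by $G$ a priori, and that if some $h\in G$ moves a class $\beta$ with $i(\alpha,\beta)\neq 0$ one must produce an element of $H$ doing the same, which uses that $H$ is normal of finite index (so suitable powers and conjugates land in $H$). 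None of this machinery appears in your write-up. If your intention was to prove Proposition \ref{prop-modm-maximality}, your outline is broadly parallel to the paper's proof of that result (reduction to infinite cyclic subgroups via torsion-freeness, cutting along $\sigma$, and the fact that pseudo-Anosov centralizers in the torsion-free components are infinite cyclic), but it should be matched to that statement rather than to this one.
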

\begin{lem}\label{gs=s-reduct}
 Let $S$ be an orientable compact  surface  with finitely many  punctures.  If $f,g \in \mcg$ are such that $f^q=gf^pg^{-1}$  for some $p,q \in \z-\{0\}$,  then $g\sigma(f)=\sigma(f)$. 
\end{lem}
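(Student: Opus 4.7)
The plan is to combine the two parts of Lemma~\ref{lem-red-syst} with the definition $\sigma(H) := \sigma(H \cap \mcgm)$. Part (ii) applied to the infinite cyclic group $\langle f^p\rangle$ gives
\[
\sigma(\langle f^q\rangle) \;=\; \sigma\bigl(g\langle f^p\rangle g^{-1}\bigr) \;=\; g\,\sigma(\langle f^p\rangle),
\]
since $g\langle f^p\rangle g^{-1} = \langle g f^p g^{-1}\rangle = \langle f^q\rangle$ by hypothesis. Therefore the desired identity $g\sigma(f)=\sigma(f)$ will follow as soon as we establish
\[
\sigma(\langle f\rangle) \;=\; \sigma(\langle f^p\rangle) \;=\; \sigma(\langle f^q\rangle).
\]

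First I would dispose of the case where $f$ has finite order. By Theorem~\ref{thm-modm-pure} the group $\mcgm$ is torsion free, so $\langle f\rangle \cap \mcgm = \{1\}$, and $\sigma(f) = \sigma(\{1\}) = \emptyset$ (no isotopy class can satisfy condition (ii) in the definition of essential reduction class against the trivial group), so the claim is vacuous. Assume henceforth that $f$ has infinite order. Because $\mcgm$ has finite index in $\mcg$, there exists a smallest positive integer $N$ with $f^N \in \mcgm$, and $\langle f\rangle \cap \mcgm = \langle f^N\rangle$. The same argument shows $\langle f^p\rangle \cap \mcgm = \langle f^M\rangle$ for some positive $M$ divisible by $p$, and the cyclic group $\langle f^{pN}\rangle$ sits inside both $\langle f^N\rangle$ and $\langle f^M\rangle$ as a finite-index subgroup, automatically normal because $\langle f\rangle$ is abelian.

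Now I would apply Lemma~\ref{lem-red-syst}(i) twice along the chain
\[
\sigma(\langle f\rangle) \;=\; \sigma(\langle f^N\rangle) \;=\; \sigma(\langle f^{pN}\rangle) \;=\; \sigma(\langle f^M\rangle) \;=\; \sigma(\langle f^p\rangle),
\]
and the identical argument with $q$ in place of $p$ yields $\sigma(\langle f\rangle) = \sigma(\langle f^q\rangle)$. Together with the equivariance identity from the first paragraph this gives $\sigma(f) = \sigma(\langle f^q\rangle) = g\,\sigma(\langle f^p\rangle) = g\sigma(f)$, as required. The only step demanding any attention is the verification that $\langle f^{pN}\rangle$ really lies in the intersection $\langle f^N\rangle \cap \langle f^M\rangle$ as a finite-index subgroup so that Lemma~\ref{lem-red-syst}(i) applies; both facts are immediate because all groups involved are infinite cyclic subgroups of $\langle f\rangle$, and $\mcgm$ has finite index in $\mcg$.
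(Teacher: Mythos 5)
Your proof is correct and follows essentially the same route as the paper's: both arguments combine Lemma~\ref{lem-red-syst}(i) (applied to the finite-index normal inclusions $\langle f^p\rangle,\langle f^q\rangle \unlhd \langle f\rangle$ to get $\sigma(\langle f^p\rangle)=\sigma(\langle f^q\rangle)=\sigma(f)$) with Lemma~\ref{lem-red-syst}(ii) for the conjugation equivariance. Your extra steps — disposing of the finite-order case and threading the finite-index comparison through $\langle f\rangle\cap\mcgm$ — are just a more careful justification of the same invocation of part (i); the paper applies it directly to $\langle f^q\rangle\unlhd\langle f\rangle$.
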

\begin{proof}
  The hypothesis entails that  $\sigma(f^q)=\sigma(gf^pg^{-1})$, by Lemma \ref{lem-red-syst} 
  \begin{align*}
  \sigma(f^q)=\sigma(f)\;\;\text{ and } \;\;\sigma(gf^pg^{-1})=\sigma(gfg^{-1}),
  \end{align*} then
  $\sigma(gfg^{-1})=\sigma(f)$
and by Lemma \ref{lem-red-syst}   $\sigma(gfg^{-1})=g\sigma(f)$,  therefore  $g\sigma(f)=\sigma(f)$. 
\end{proof}
%

\emph{Canonical Form. } By cutting $S$ along a reduction system $\sigma'$ of an element in $\mcg$ and applying the Nielsen-Thurston classification Theorem to each subsurface of $S_{\sigma'}$ we can obtain a decomposition of the element as follows, see Figure \ref{fig-canonicalf}.   
\begin{thm} \cite[Cor. 13.3]{margalit}
\label{canonical-descom-mod}
Let $f\in \mcg $ and $\sigma=\sigma(f)$ be its canonical
reduction system with
vertices $\alpha_1,...,\alpha_r$. Let    
$S_{\sigma}=S_1\cup ...\cup S_k$ be as before and let   $\overline{N_{\sigma}}=S_{k+1}\cup \cdots \cup S_{k+r}$ be the union of pairwise disjoint closed neighborhoods $S_{k+i}$  of curves representatives of the $\alpha_i$.   Then 
there is a representative $\phi $ of $f$ that
permutes the $S_i$, so that some power of $\phi$ leaves invariant each
$S_i$. Moreover, there exists an integer  $p> 0$ so that 
$\phi^p(S_i)=S_i$ for all $i$ and
\begin{align}\label{canonical-descomp}
  f^p=\prod_{i=1}^{k} \eta_{S_i}(\overline{f}_i) \prod_{j=1}^{r } T_{\alpha_j}^{n_j},
\end{align}
where  each $\overline{f_i} \in \Gamma(S_i)$ is either pseudo-Anosov or the identity  and $n_j \in \z$ for  $1\leq j \leq r$.
\end{thm}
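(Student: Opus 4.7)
The plan is to realize $f$ by a diffeomorphism that respects $\sigma=\sigma(f)$, pass to a power that additionally lies in $\mcgm$ and fixes each resulting subsurface setwise, and then read off the factorization directly from the definition of a pure diffeomorphism.

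First I would choose a representative $\phi\in\dif(S)$ of $f$ preserving the realization $C=\alpha_1\cup\cdots\cup\alpha_r$ setwise and, after a further isotopy, preserving a pairwise disjoint family of closed annular neighborhoods $S_{k+1},\dots,S_{k+r}$ of the $\alpha_j$; such a $\phi$ exists because $f$ fixes the simplex $\sigma$ by definition of canonical reduction system. The induced action of $\phi$ on the finite set of pieces $\{S_1,\ldots,S_{k+r}\}$ has finite order, so there is $p_0>0$ with $\phi^{p_0}(S_i)=S_i$ for each $i$. Because $[\mcg:\mcgm]$ is finite, some further multiple $p$ of $p_0$ also satisfies $f^p\in\mcgm$, and I would fix such a $p$.

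By Theorem \ref{thm-modm-pure} the mapping class $f^p$ is pure, and by Lemma \ref{lem-red-syst}(i) we have $\sigma(f^p)=\sigma(f)=\sigma$ (using that $\langle f^p\rangle$ has finite index in $\langle f\rangle$ when $f$ has infinite order, with the remaining case trivial). Combining these, I can choose an isotopic representative $\psi$ of $f^p$ which fixes $C$ and induces on each non-annular component $S_i$ ($1\le i\le k$) of $S_{\sigma}$ either a pseudo-Anosov class or the identity class $\bar f_i\in\Gamma(S_i)$. On each annular component $S_{k+j}$ the mapping class group rel boundary is infinite cyclic, generated by $T_{\alpha_j}$, so $\psi|_{S_{k+j}}$ represents $T_{\alpha_j}^{n_j}$ for some $n_j\in\mathbb{Z}$.

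To assemble, since the supports of the $\bar f_i$ on the $S_i$ and of the $T_{\alpha_j}^{n_j}$ on the $S_{k+j}$ are pairwise disjoint, the factors $\eta_{S_i}(\bar f_i)$ and $T_{\alpha_j}^{n_j}$ commute and multiply to a diffeomorphism isotopic to $\psi$, yielding
\[
f^p=\prod_{i=1}^{k}\eta_{S_i}(\bar f_i)\prod_{j=1}^{r}T_{\alpha_j}^{n_j}.
\]
The main obstacle is identifying the pure data of $f^p$ with the cut along $\sigma$: the definition of pure provides \emph{some} $1$-submanifold $C'$ along which the complementary restrictions are pseudo-Anosov or identity, and we need $C'$ to be isotopic to $C$. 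This matching is where the theory of canonical reduction systems (Ivanov) is used essentially, since the realization of $\sigma(f^p)$ must coincide, up to isotopy, with the minimal such $C'$ for a pure mapping class.
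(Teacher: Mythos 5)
The paper offers no proof of this statement---it is quoted verbatim from Farb--Margalit \cite[Cor.~13.3]{margalit}---so there is nothing internal to compare against; your sketch follows the standard route (realize the reduction system, pass to a power landing in the pure subgroup $\mcgm$, read off the pieces). The structure is sound, but the step you flag at the end is the actual mathematical content of the corollary, and you have named it rather than closed it. To close it: purity of $f^p$ gives a witness $1$-submanifold $C'$ on whose complementary pieces $f^p$ is pseudo-Anosov or the identity. Every class of $\sigma(f^p)$ is represented by a component of $C'$ (a curve fixed by $f^p$ cannot cross $C'$ essentially, cannot lie in a pseudo-Anosov piece, and if it lay in an identity piece without being parallel to $C'$ it would fail criterion (ii) of an essential reduction class). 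Conversely, if a component $\gamma$ of $C'$ satisfies $[\gamma]\notin\sigma(f^p)$, then some $\beta$ with $i(\gamma,\beta)\neq 0$ is fixed by all powers of $f^p$; such a $\beta$ enters the pieces adjacent to $\gamma$, forcing both to be identity pieces with vanishing relative twisting about $\gamma$. Hence each component of $S_\sigma$ is obtained by gluing identity pieces of $S-C'$ along untwisted curves, so each $\overline{f}_i$ is genuinely pseudo-Anosov or the identity, and the surplus curves of $C'$ contribute no twist factors. Without this argument your proof would only produce the decomposition along \emph{some} reduction system, not along $\sigma(f)$, and the pieces could a priori be reducible. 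One further small point: to obtain well-defined classes $\overline{f}_i\in\Gamma(S_i)$ (which is defined rel boundary) and integers $n_j$, you must first isotope $\psi$ to restrict to the identity on every $\partial S_i$; this is possible because $\psi$ preserves each boundary circle orientation-preservingly, and the adjustment is absorbed into the annular factors $T_{\alpha_j}^{n_j}$. Your use of purity to rule out periodic (rather than identity) restrictions is correct and is precisely why passing to $f^p\in\mcgm$ is the right move.
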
 
 When $f\in\mcgm$ and $m\geq 3$  the integer $p$ can always be taken to be one.   

\begin{figure}
\begin{center}
\includegraphics[scale=0.3]{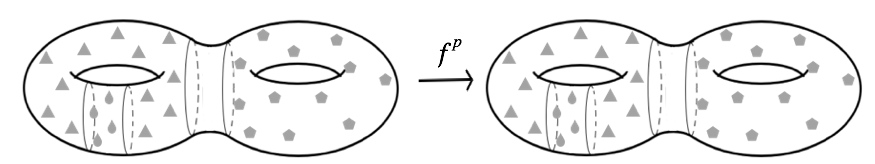}
\caption{The canonical form of $f^k$, where each subsurface is fixed. A shaded region indicates a pseudo-Anosov component or a Dehn Twist and  a unshade region indicates an identity component.    }
\label{fig-canonicalf}
\end{center}
\end{figure}
%

\subsection{Stabilizers }\label{section-stabilizers}
\label{sec-stabilizer}
 
In this section, we shall consider $S$ with empty boundary. The stabilizers  $\mcg_{\sigma}$  will be used to understand  the normalizers of reducible elements of  $\mcg$.

 Let  $\sigma \in \cs$ and    $S_{\sigma}=S_1\cup \cdots \cup S_k$ be as in Section \ref{sec-prelim}. 
  Denote   the stabilizer subgroup of $\sigma$  in $\mcg$ by 
       $$\mcg_{\sigma}=\{g \in \mcg | g(\sigma)=\sigma \}.$$
\begin{prop}\cite[Prop. 3.20]{margalit}\label{cutting-homom}
 Let $\sigma \in \cs$ with
vertices $\alpha_1,...,\alpha_r$ and  $\ss=S_1\cup \cdots \cup S_k$. Then there is a well defined homomorphism
\begin{align} 
\rhos \colon \mcg_{\sigma} \to \mcgss.
\end{align}
where  $\ker (\rhos)=\langle T_{\alpha_1},...,T_{\alpha_r}\rangle$  is the  free abelian subgroup generated by Dehn twists about the curves $\alpha_1,..., \alpha_r$.
\end{prop}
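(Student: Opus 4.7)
The plan is to construct $\rhos$ as a ``cut and cork'' operation: given $f\in\mcg_\sigma$, pick a representative that preserves the realization of $\sigma$, restrict it to $\ss$, and then pass to the corked surface $\widehat{\ss}$. The kernel is then identified via the geometry of Dehn twists supported in $N_\sigma$. For the construction, given $f\in\mcg_\sigma$ I would use that $f$ permutes the isotopy classes $[\alpha_1],\ldots,[\alpha_r]$ together with the standard tubular-neighborhood and isotopy-of-curves arguments to choose a representative $\phi\in\h(S,\partial S)$ that preserves the realization $C$ of $\sigma$ setwise, preserves $N_\sigma$ setwise, and acts on each annular component of $N_\sigma$ by an isometric permutation possibly composed with a power of the Dehn twist about its core. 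Restricting $\phi$ to $\ss$ and then collapsing each boundary component to the corresponding puncture of $\widehat{\ss}$ yields an orientation-preserving diffeomorphism whose isotopy class I take as $\rhos(f)\in\mcgss$.

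For well-definedness, if $\phi,\phi'$ are two such representatives of the same $f$, any ambient isotopy between them can be modified, by the same isotopy-of-curves technology applied now to the full multicurve $C$, to preserve $C$ and to remain in standard form on $N_\sigma$ throughout. Restricting such an isotopy to $\ss$ and passing to $\widehat{\ss}$ shows that the induced mapping classes in $\mcgss$ agree. The homomorphism property follows from this, since a composition of representatives in standard form is, up to an isotopy of the required type, again in standard form, and both the restriction to $\ss$ and the corking operation commute with composition.

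For the kernel, each $T_{\alpha_i}$ has a representative supported in the annular component of $N_\sigma$ around $\alpha_i$, so its restriction to $\ss$ is the identity and $T_{\alpha_i}\in\ker\rhos$. Conversely, if $\rhos(f)=1$ I would choose a representative $\phi$ so that the induced diffeomorphism of $\widehat{\ss}$ is isotopic to the identity, then pull this isotopy back to $\ss$ and absorb it into $\phi$ to assume $\phi$ is the identity away from an arbitrarily small neighborhood of $C$, hence supported on $N_\sigma$. Since $N_\sigma$ is a disjoint union of annuli and the mapping class group of an annulus rel boundary is $\mathbb{Z}$ generated by the Dehn twist about the core, this gives $f=\prod_{i=1}^r T_{\alpha_i}^{n_i}$ for some integers $n_i$. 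Freeness of the resulting abelian subgroup is then obtained by recovering each $n_i$ from the action of $f$ on a curve crossing $\alpha_i$ once and disjoint from the other $\alpha_j$, using the Dehn-twist properties (commutation when intersection number is zero and infinite order) listed earlier.

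The main obstacle is the well-definedness step: it rests on the ability to isotope arbitrary representatives of a mapping class to standard form on $N_\sigma$, and to modify isotopies between two such representatives to preserve this form, which requires careful use of isotopy-extension and isotopy-of-curves results on surfaces with punctures. Once this technical step is in hand, the homomorphism property and the kernel calculation reduce to routine manipulations with the properties of Dehn twists and of the mapping class group of an annulus.
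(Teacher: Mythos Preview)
The paper does not supply its own proof of this proposition; it is quoted as \cite[Prop.~3.20]{margalit} and used as a black box. Your sketch is the standard argument and is correct in outline. Immediately after the statement the paper does record the factorization $\rhoso=\theta_{\ss}\circ\eta_{\ss}^{-1}$ on the finite-index subgroup $\mcg_\sigma^0$, which is precisely your cut-and-cork construction expressed through the homomorphisms $\eta_{\ss}$ and $\theta_{\ss}$ introduced earlier; from that viewpoint the kernel computation reduces to the already-stated facts $\ker\eta_{\ss}=\langle T_{\beta_i}T_{\gamma_i}^{-1}\rangle$ and $\ker\theta_{\ss}=\langle T_{\beta_i},T_{\gamma_i}\rangle$, since both $T_{\beta_i}$ and $T_{\gamma_i}$ become $T_{\alpha_i}$ in $\mcg$. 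The one step in your argument that deserves an extra sentence is ``pull this isotopy back to $\ss$'': an isotopy of $\widehat{\ss}$ rel punctures need not be the identity near the punctures, so one must first adjust it to fix a disk neighborhood of each puncture before extending over the corking disks --- equivalently, this is exactly the content of the computation of $\ker\theta_{\ss}$.
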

Let $\mcg_{\sigma}^0$ be the finite index  subgroup of $\mcg_{\sigma}$  that fixes  each $\alpha_i$ with orientation, since  elements in $\mcg_{\sigma}^0$  are  orientation-preserving, it follows that they also preserve the sides of each curve $\alpha_i$ in $S$,  
thus they fix each subsurface $S_i$. Denote the restriction $\rhoso= \rhos|_{\mcg_{\sigma}^0}$,      
 then
 $\rhoso= \theta_{\ss} \eta_{\ss}^{-1}.$   
 Therefore we have   that
\begin{align}\label{cutting-homom-0}
\rhoso\colon \mcg_{\sigma}^0 \to \prod_{i=1}^k \Gamma(\widehat{S}_i,\mathcal{Q}_i),
\end{align}
 is surjective and  $\ker (\rhoso)=\ker(\rhos)$.

\begin{rem}\label{rem-image-pseudo}
By  \cite[Thm. 1.2]{ivanov}, elements in $\mcgm_{\sigma}$ do not rearrange the components of $S_{\sigma}$ and fix each curve of $\sigma$, 
 so $\mcgm_{\sigma}\subseteq \mcg_{\sigma}^0$.
Let  $f\in\mcgm$, $\sigma=\sigma(f)$ with vertices $\alpha_1,...,\alpha_r$, and let the canonical form of $f$ be as follows  
\begin{align*}
  f=\Pi_{i=1}^{k} \eta_{S_i}(\overline{f}_i) \Pi_{j=1}^{r } T_{\alpha_j}^{n_j}. 
\end{align*} 
Moreover, assume  that     $\rhoso(f)=(f_1,...,f_k)$. Note that $f\in \mcgm_{\sigma}$ and 
$$(\overline{f}_1,...,\overline{f}_k) \in \eta_{\ss}^{-1}(f).
$$ 
Since $\rhoso(f)=\theta_{\ss}\eta_{\ss}^{-1}(f)$ and $\rhoso$ is well defined, then 
\begin{align*}
(f_1,...,f_k)&=\theta_{\ss}(\overline{f}_1,...,\overline{f}_k)\\&= (\theta_{S_1}(\overline{f}_1),...,\theta_{S_k}(\overline{f}_k)),
\end{align*}
hence for each $i$, $f_i=\theta_{S_i}(\overline{f}_i)$,  therefore  $f_i$ is pseudo-Anosov or the identity. 
 \end{rem}
\section{Commensurators in $\mcg$}\label{sec-commen}
In order to  build a model  $\evce \mcg$ as  mentioned in Sec. \ref{section-build},  we need to know the commensurators of the infinite virtually cyclic subgroups in $\mcg$. 

\subsection{Condition (C) for $\mcg$ } \label{proof-thm-condition-c}
Using the following condition,   we may give a description of the commensurators of infinite virtually cyclic subgroups. 
\begin{enumerate}
\item[(C)] For every $g,h\in G$, with $|h|=\infty$,  and $k,l \in \z$,
$$ \text{if  } \;\;gh^kg^{-1}=h^l  \;\;\text{ then }  \;\;|k|=|l|.$$
\end{enumerate}  

\begin{prop}\label{cond-c-mod} Let $S$ be an orientable compact  surface with finitely many punctures  and $\chi(S)<0$. The group $\mcg$ satisfies condition (C).   
\end{prop}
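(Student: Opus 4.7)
The plan is to reduce to the case where $h$ is pure via a suitable power, and then apply the Nielsen–Thurston classification (Theorem \ref{classif-mcg}) together with the canonical form of Theorem \ref{canonical-descom-mod}.

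First, since $\mcgm$ has finite index in $\mcg$, there exists $M\geq 1$ with $h':=h^M\in\mcgm$. The relation $gh^kg^{-1}=h^l$ rewrites as $g(h')^kg^{-1}=(h')^l$, so condition (C) for $h'$ implies condition (C) for $h$. By Theorem \ref{thm-modm-pure} the element $h'$ is pure, so I may assume $h\in\mcgm$ is pure with $|h|=\infty$. The case $k=0$ forces $l=0$, so assume $k,l\ne 0$.

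Set $\sigma:=\sigma(h)$. Lemma \ref{lem-red-syst}(i) applied to the finite-index (abelian) subgroup $\langle h^k\rangle\subseteq\langle h\rangle$ gives $\sigma(h^k)=\sigma=\sigma(h^l)$, and Lemma \ref{gs=s-reduct} applied to $h^l=gh^kg^{-1}$ then gives $g\sigma=\sigma$, so $g\in\mcg_\sigma$. If $\sigma=\emptyset$, the Nielsen–Thurston theorem forces $h$ to be pseudo-Anosov with some dilatation $\lambda>1$. Since $h^k$ has dilatation $\lambda^{|k|}$ and dilatation is a conjugacy invariant, $gh^kg^{-1}=h^l$ yields $\lambda^{|k|}=\lambda^{|l|}$, hence $|k|=|l|$.

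If $\sigma\neq\emptyset$, I would apply the canonical form
\[
h=\prod_{i=1}^{K}\eta_{S_i}(\bar h_i)\prod_{j=1}^{r}T_{\alpha_j}^{n_j},
\]
with each $\bar h_i$ pseudo-Anosov or trivial (and $p=1$ since $h\in\mcgm$). Let $\pi$ and $\tau$ denote the permutations that $g\in\mcg_\sigma$ induces on the subsurfaces $\{S_i\}$ and the curves $\{\alpha_j\}$. Rewriting $gh^kg^{-1}$ in canonical form via property (2) of Dehn twists and the naturality of the $\eta_{S_i}$, and matching components against those of $h^l$, one obtains at each pseudo-Anosov component $\bar h_i$ (with dilatation $\lambda_i$) the equality $\lambda_i^{|k|}=\lambda_{\pi(i)}^{|l|}$, and at each curve $\alpha_j$ with $n_j\neq 0$ the equality $kn_j=ln_{\tau(j)}$ (by property (1) of Dehn twists). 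Since $|h|=\infty$, at least one such nontrivial component exists. Iterating either identity around a cycle of $\pi$ or $\tau$ of length $d$ yields respectively $|k|^d=|l|^d$ or $k^d=l^d$, hence $|k|=|l|$.

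The main obstacle is the bookkeeping in the canonical-form comparison, since $g$ need not fix the subsurfaces $S_i$ or the curves $\alpha_j$ individually; the orbit-iteration trick is what sidesteps this without passing to a further finite-index subgroup. A secondary subtle point is to verify that at least one $\bar h_i$ is pseudo-Anosov or at least one $n_j\neq 0$, which follows from $|h|=\infty$ and the definition of the canonical reduction system.
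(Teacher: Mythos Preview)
Your argument is essentially the paper's approach: reduce to a pure element, use the canonical reduction system to separate pseudo-Anosov pieces (handled by dilatation) from Dehn--twist pieces (handled by exponent matching), and run an orbit-iteration to deal with the permutation induced by $g$. The paper organizes this into separate propositions (Propositions~\ref{pAnosov-cond-c} and~\ref{reducible-cond-c}) and works via the cutting homomorphism $\rho_\sigma$ rather than directly comparing canonical forms; this has the advantage that the pseudo-Anosov components $f_i=\theta_{S_i}(\bar h_i)$ and the Dehn--twist exponents in $\ker(\rho_\sigma)\cong\z^r$ are genuinely well-defined, whereas in your formulation the $n_j$'s in the canonical form are only determined modulo boundary twists of the $S_i$, so your matching $kn_j=ln_{\tau(j)}$ is only literally valid once you know all $\bar h_i$ are trivial (which is exactly the paper's Case~1 of Proposition~\ref{reducible-cond-c}).

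There is, however, one genuine omission: the non-empty boundary case. Your dichotomy ``$\sigma=\emptyset\Rightarrow h$ pseudo-Anosov'' fails when $\partial S\neq\emptyset$: a product of Dehn twists about boundary-parallel curves has $\sigma(h)=\emptyset$ (it fixes every essential class, so no class satisfies condition~(ii) of the canonical reduction system) yet is neither pseudo-Anosov nor trivial. The paper handles this in Case~II of Proposition~\ref{cond-c-mod} by applying the corking homomorphism $\theta_S\colon\mcg\to\Gamma(\widehat S)$: if $h\in\ker\theta_S$ one uses that $\ker\theta_S$ is central to get $k=l$ directly, and otherwise one applies the closed-surface case to $\theta_S(h)$. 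You should insert this reduction before invoking Nielsen--Thurston.
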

\begin{proof}
\textbf{Case I.}
 The proof when $S$ has empty boundary is given in Propositions  \ref{pAnosov-cond-c} and \ref{reducible-cond-c}, since  elements with infinite order in $\mcg$ are reducible or pseudo-Anosov. \\
\textbf{Case II.} Assume $\partial S\neq \emptyset$  and it has $b$ connected components. Let $f,g\in \mcg$, with $|f|=\infty$ such  that $gf^pg^{-1}=f^q$ for some $p,q\in \z-\{0\}$.  
Let $$\theta_S\colon \mcg \to \Gamma(\widehat{S})$$  be the corking homomorphism of $S$ as in (\ref{corking-h}),   
then  $\ker(\theta_S) \simeq \z^b $ is the free abelian group generated by Dehn twists about curves isotopic to the  boundary components of $S$.  
Since  $\ker( \theta_S)$ is central in $\mcg$,   if $f\in \ker( \theta_S)$ we conclude that $p=q$. \\
On the other hand, if $f\not \in \ker( \theta_S)$, applying $\theta_S$, we have     
 $$\theta_S(g)\theta_S(f)^p\theta_S(g)^{-1}=\theta_S(f)^q,$$
and $|\theta_S(f)|=\infty$, applying Case I  we  conclude that $|p|=|q|$.  
\end{proof}

About  powers of a pseudo-Anosov diffeomorphism  we have the following:
Suppose that $S$ has empty boundary. Let  $f\in \mcg $ be a pseudo-Anosov ele-ment and $\phi $ be a pseudo-Anosov diffeomorphism in its class,  with $(\f^s_{\phi},\mu^s_{\phi})$, $(\f^u_{\phi},\mu^u_{\phi})$  the stable and unstable foliations of ${\phi}$ respectively and $\lambda_{\phi}$  its  dilatation.   For $n\in \z-\{0\}$,
\begin{align}
 \phi^n (\f^s_{\phi},\mu^s_{\phi})=(\f^s_{\phi},\lambda_{\phi}^{-n}\mu^s_{\phi}), \label{power-p-a-fol}\\
 \phi^n (\f^u_{\phi},\mu^u_{\phi})= (\f^u_{\phi},\lambda_{\phi}^{n}\mu^u_{\phi}).\nonumber
\end{align}
The reader may consult   \cite{Thurston's-work} and \cite{McCarthy} as references.

\begin{prop}\label{pAnosov-cond-c} 
 Let $S$ be an orientable closed surface with finitely many punctures  and $\chi(S)<0$. Let $r\in \mathbb{N}$ and $h_1,...,h_r\in \mcg$ be pseudo-Anosov mapping classes, suppose that there exist $g_1,...,g_r\in \mcg$ and a permutation $\gamma\in \Sigma_r$ such that  
\begin{align}\label{eq-conj-pAs}
g_i h_i^p g_i^{-1}=h_{\gamma(i)}^q, \;\;\; \text{ for all } i \in \{1,...,r\},
\end{align}
for  some $p,q\in \mathbb{Z}-\{0\}$, then $|p|=|q|.$
\end{prop}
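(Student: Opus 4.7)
The plan is to exploit the dilatation, a conjugacy invariant attached to every pseudo-Anosov mapping class. Each $h_i$ has a well-defined dilatation $\lambda_i>1$: choosing a pseudo-Anosov representative $\phi_i$ of dilatation $\lambda_i$, the identity (\ref{power-p-a-fol}) and its unstable analogue show that, for every $n\neq 0$, $\phi_i^n$ is pseudo-Anosov with the same invariant foliations and dilatation $\lambda_i^{|n|}$. Moreover conjugation preserves this invariant: for any diffeomorphism $\psi$, the map $\psi\phi_i^n\psi^{-1}$ is pseudo-Anosov with foliations $\psi_*(\f^s_i,\mu^s_i)$, $\psi_*(\f^u_i,\mu^u_i)$ and the same dilatation $\lambda_i^{|n|}$. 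Both statements descend to mapping classes.

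First I would apply these two facts to (\ref{eq-conj-pAs}): the class $g_i h_i^p g_i^{-1}$ is pseudo-Anosov with dilatation $\lambda_i^{|p|}$, while $h_{\gamma(i)}^q$ is pseudo-Anosov with dilatation $\lambda_{\gamma(i)}^{|q|}$. Equating these numerical invariants yields
\begin{equation}\label{plan-lambda-eq}
\lambda_i^{|p|}=\lambda_{\gamma(i)}^{|q|},\qquad i=1,\dots,r.
\end{equation}

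The main step is then an elementary iteration of (\ref{plan-lambda-eq}) along the cycles of $\gamma$. Fix $i$ and let $m\geq 1$ be the length of its $\gamma$-orbit, so $\gamma^m(i)=i$. Applying (\ref{plan-lambda-eq}) successively to $i,\gamma(i),\dots,\gamma^{m-1}(i)$ and multiplying the resulting equalities,
$$\prod_{j=0}^{m-1}\lambda_{\gamma^j(i)}^{|p|}\;=\;\prod_{j=0}^{m-1}\lambda_{\gamma^{j+1}(i)}^{|q|}.$$
The index sets $\{\gamma^j(i):0\leq j\leq m-1\}$ and $\{\gamma^{j+1}(i):0\leq j\leq m-1\}$ coincide, so both sides share the common base $\Lambda:=\prod_{j=0}^{m-1}\lambda_{\gamma^j(i)}>1$. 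Hence $\Lambda^{|p|}=\Lambda^{|q|}$, and therefore $|p|=|q|$.

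I do not expect a real obstacle here: the whole argument rests on the conjugacy invariance of the dilatation together with the cyclic structure of $\gamma$, which is exactly what forbids a nontrivial ratio $|p|/|q|\neq 1$ (without the cycles, (\ref{plan-lambda-eq}) would only force the $\lambda_i$ to differ by successive powers of $|p|/|q|$). No further input from the topology of $S$, its punctures, or the particular choice of the $g_i$'s enters the argument.
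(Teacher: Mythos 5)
Your proof is correct and follows essentially the same route as the paper: both arguments reduce (\ref{eq-conj-pAs}) to the dilatation identity $\lambda_i^{|p|}=\lambda_{\gamma(i)}^{|q|}$ via conjugacy-invariance and the behaviour of dilatation under powers, and then exploit the cycle structure of $\gamma$. Your finishing step — multiplying the identities around a $\gamma$-cycle to get $\Lambda^{|p|}=\Lambda^{|q|}$ with $\Lambda>1$ — is a slightly cleaner way to close the argument than the paper's iteration $\lambda_{\gamma^a(1)}^{p/q}=\lambda_{\gamma^{a+1}(1)}$ and its separate case analysis on the sign of $q$, but it is the same idea.
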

  \begin{proof}
Suppose that $p>0$ and let $I=\{1,...,r\}$. For each $i \in I$, let $\phi_i$  be pseudo-Anosov diffeomorphisms in the class  $h_i$, for each $i$ there exists  $G_i\in \dif(S)$ in the class $g_i$, such that $G_i\phi_i^pG_i^{-1}=\phi_{\gamma(i)}^q$, this follows by the uniqueness of pseudo-Anosovs \cite[Exp. 12]{Thurston's-work}.    \\ 
Let   $(\f_i^s,\mu_i^s)$ and $(\f_i^u,\mu_i^u)$ be the stable and unstable foliations,  respectively, of $\phi_i$, with dilatation $\lambda_i >1$, $i\in I$. 
By (\ref{power-p-a-fol}), if $n>0 $ (or $n<0$), $(\f^s_{i},\mu^s_{i})$ and  $(\f^u_{i},\mu^u_{i})$ 
are the  stable and unstable   (unstable and stable)  foliations respectively of $\phi_i^n$ with dilatation $\lambda_{i}^n$ (or $\lambda_i^{-n}$).

Suppose that $q>0$. Then  $G_i$ sends  the stable and unstable foliations  of $\phi_i^p $ to the  stable and unstable  foliations  of $\phi_{\gamma(i)}^q $ respectively (\cite[Lem. 16, Exp. 12]{Thurston's-work}). Since the foliations are uniquely ergodic, the measure is up to a constant,  that is,
\begin{align*}
G(\f^s_i,\mu^s_i)=(\f^s_{\gamma(i)},a\mu^s_{\gamma(i)}), &\;\;
G(\f^u_i,\mu^u_i)=(\f^u_{\gamma(i)},b\mu^u_{\gamma(i)}),
\end{align*} 
with $ab=1$. Thus
\begin{align*}
 \phi_{\gamma(i)}^q(\f^s_{\gamma(i)},a\mu_{\gamma(i)}^{s})&= 
 \phi_{\gamma(i)}^q(G(\f^s_{i},\mu^s_{i}))\\
 &=  G \phi_i^p G^{-1}(G(\f^s_{i},\mu^s_{i}))\\
   &= G\phi_i^p (\f^s_{i},\mu^s_{i}) \\
   &= G (\f^s_{i},\lambda_{i}^{-p}\mu^s_{i})  \\
   &=(\f^s_{\phi},\lambda_{i}^{-p}a\mu^s_{\phi}),
 \end{align*}
then   the diffeomorphisms $\phi_i^p$  and $\phi_{\gamma(i)}^q$ have the same dilatation. Then for each $i$, we have    
$\lambda_i^p=\lambda_{\gamma(i)}^q $. Since $\lambda_i>1$, for all $i$, $\lambda_i^p$ has only one real positive $q$-root, then  we can conclude that 
$$\lambda_i^{\frac{p}{q}}=\lambda_{\gamma(i)}, \;\;\; \text{for all }  i.$$ 
If for some $i$, $\gamma(i)=i$, then we conclude that $p=q$. If $\gamma(i)\neq i$ for all $i$, let $n\geq 2$ be the minimum positive integer such that $\gamma^n(1)=1$.  Since  
$\lambda_i^{\frac{p}{q}}=\lambda_{\gamma(i)}$ for all $i$, then 
\begin{align}\label{lambda}
\lambda_{\gamma^a(1)}^{\frac{p}{q}}=\lambda_{\gamma^{a+1}(1)}, \;\: a\in \{0,...,n\},
\end{align}
thus  $\lambda_1=\lambda_1^{(\frac{p}{q})^{n-1}}$. Since $\lambda_1> 1$, we conclude that $p=q$.   
On the other hand, if $q<0$, we have that $\lambda_i^p=\lambda_{\gamma(i)}^{-q}$, and in a similar way, we conclude  $p=-q$. 
 \end{proof}

\begin{prop} \label{reducible-cond-c}
 Let $S$ be an orientable closed  surface with finitely many punctures  and $\chi(S)<0$.  Let $f\in \mcg$ be a  reducible element such that $f^q=gf^pg^{-1}$ for some $g\in \mcg$ and $p,q \in \z-\{0\}$, then $|q|=|p|$.
\end{prop}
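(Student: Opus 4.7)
The plan is to reduce the reducible case to the pseudo-Anosov case (Proposition \ref{pAnosov-cond-c}) via the Nielsen--Thurston decomposition along the canonical reduction system $\sigma=\sigma(f)$, which is non-empty since $f$ is reducible of infinite order.

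First I would use Lemma \ref{gs=s-reduct}, which applied to $f^q=gf^pg^{-1}$ gives $g\sigma=\sigma$, i.e.\ $g\in\mcg_{\sigma}$. Since $\mcgm$ has finite index in $\mcg$ for $m\geq 3$, pick $N$ with $f^N\in\mcgm$; by Lemma \ref{lem-red-syst}(i) we have $\sigma(f^N)=\sigma(f)=\sigma$, and raising the original identity to the $N$-th power yields $(f^N)^{q}=g(f^N)^{p}g^{-1}$. Thus we may replace $f$ by $f^N$ and assume $f$ is pure, $f\in\mcgm\cap\mcg_{\sigma}^0$, with $|f|=\infty$.

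Next I would arrange $g\in\mcg_{\sigma}^0$. Since $\mcg_{\sigma}^0$ has finite index in $\mcg_{\sigma}$, some $g^M$ lies in $\mcg_{\sigma}^0$. A straightforward induction from $gf^pg^{-1}=f^q$, using $(g^nf^{p^n}g^{-n})^p=g^nf^{p\cdot p^n}g^{-n}$ at the inductive step, shows $g^n f^{p^n} g^{-n}=f^{q^n}$ for every $n\geq 1$. Replacing $(g,p,q)$ by $(g^M,p^M,q^M)$, it suffices to prove $|p^M|=|q^M|$, which is equivalent to $|p|=|q|$. I now assume $g\in\mcg_{\sigma}^0$ and continue to write the exponents as $p,q$.

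With $f\in\mcgm\cap\mcg_{\sigma}^0$ and $g\in\mcg_{\sigma}^0$, apply the cutting homomorphism $\rhoso$ of \eqref{cutting-homom-0}, obtaining $(f_1,\dots,f_k)=\rhoso(f)$ and $(g_1,\dots,g_k)=\rhoso(g)$ with $f_i^{q}=g_if_i^{p}g_i^{-1}$ in each $\Gamma(\widehat{S_i},\mathcal{Q}_i)\subseteq\Gamma(\widehat{S_i})$. By Remark \ref{rem-image-pseudo}, each $f_i$ is pseudo-Anosov or the identity. If some $f_i$ is pseudo-Anosov, Proposition \ref{pAnosov-cond-c} applied with $r=1$ inside $\Gamma(\widehat{S_i})$ (which has negative Euler characteristic by Remark \ref{rem-euler-k-r}) gives $|p|=|q|$ and we are done. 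Otherwise every $f_i$ is the identity, so $f\in\ker(\rhoso)=\langle T_{\alpha_1},\dots,T_{\alpha_r}\rangle$; write $f=\prod_{j=1}^r T_{\alpha_j}^{n_j}$. Since $g\in\mcg_{\sigma}^0$ fixes each $\alpha_j$, property (2) of Dehn twists yields $gT_{\alpha_j}g^{-1}=T_{\alpha_j}$, so $g$ commutes with $f$ and $gf^pg^{-1}=f^p$. Combined with $f^q=gf^pg^{-1}$ this gives $f^{p-q}=1$, and since $|f|=\infty$ we conclude $p=q$.

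The main obstacle is the bookkeeping of two independent passages to a power: replacing $f$ by $f^N$ to make it pure (so Remark \ref{rem-image-pseudo} applies on the components) and replacing $g$ by $g^M$ to bring it into $\mcg_{\sigma}^0$ (so $\rhoso$ is defined on it and $g$ fixes each $\alpha_j$), while keeping the conjugation relation intact and keeping track of how the exponents scale so that the final conclusion $|p|=|q|$ on the reduced data pulls back to the original $p,q$.
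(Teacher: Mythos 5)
Your proof is correct, and its skeleton is the paper's: Lemma \ref{gs=s-reduct} puts $g$ in $\mcg_{\sigma}$, you pass to $f^N\in\mcgm$ so that Remark \ref{rem-image-pseudo} makes each component of $\rhoso(f)$ pseudo-Anosov or trivial, and you finish either with Proposition \ref{pAnosov-cond-c} or with a Dehn-twist computation in $\ker(\rhos)$. The one genuine divergence is your extra reduction replacing $g$ by a power $g^M\in\mcg_{\sigma}^0$, justified by the identity $g^nf^{p^n}g^{-n}=f^{q^n}$ (which is correct, and correctly noted to preserve the conclusion since $|p^M|=|q^M|$ iff $|p|=|q|$). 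This kills the permutation of the curves $\alpha_i$ and of the subsurfaces $\widehat{S_i}$ before it can cause trouble: you then only need the $r=1$ (trivial permutation) case of Proposition \ref{pAnosov-cond-c}, and your Dehn-twist case collapses to ``$g$ commutes with $f$, hence $f^{p-q}=1$.'' The paper instead keeps the original $g$ and confronts the permutations directly --- the full permutation statement of Proposition \ref{pAnosov-cond-c} with the orbit argument $\lambda_1=\lambda_1^{(p/q)^{n-1}}$ in the pseudo-Anosov case, and the vector argument $pv_1=qv_2$ with $v_2$ a coordinate permutation of $v_1$ in the Dehn-twist case. Your route is more elementary at each step but buys this by the exponent bookkeeping you flagged; the paper's route proves the stronger permutation-equivariant statement, which it reuses elsewhere. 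Both are valid; just make sure to state explicitly that $f$ is assumed of infinite order (as the intended application in Proposition \ref{cond-c-mod} guarantees), since a finite-order reducible element would make the claim false and would also void the nonemptiness of $\sigma(f)$.
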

\begin{proof} 
Let $\sigma=\sigma(f)$,  suppose that  $\sigma$ has vertices $\alpha_1,...,\alpha_r$.  By hypothesis and by  Lemma \ref{gs=s-reduct},  $g\sigma=\sigma$, so $g$ lies in the stabilizer  $\mcg_{\sigma}$. 
Let $\rhos$ as in Theorem \ref{cutting-homom},
  \begin{align*}
\rhos \colon \mcg_{\sigma }\to \mcgss.
\end{align*}
\textbf{Case 1.} Suppose that $f\in \ker (\rhos) $, that is,   $f=\Pi_{i=1}^{r} T_{\alpha_i}^{n_i} $; without loss of generality we can suppose  that  $gdc\{n_1,...,n_r\}=1$. Note that $g$ may permute   $\alpha_1,...,\alpha_r$, let $\delta\in \Sigma_r$ such that $g(\alpha_i)=\alpha_{\delta(i)}$ for all $i\in I$. Since the $\alpha_i's$ have  disjoint realizations in $S$, the Dehn twists $T_{\alpha_i}$ commute, thus  by hypothesis and because $gT_{\alpha_i}g^{-1}=T_{g(\alpha_i)}=T_{\alpha_{\delta(i)}}$ (see  \cite[Sec. 3.3]{margalit}), we have the following,
\begin{align*}
\Pi_{i=1}^{r} T_{\alpha_i}^{qn_i} &= g\Pi_{i=1}^{r} T_{\alpha_i}^{pn_i}g^{-1}\nonumber \\
&= \Pi_{i=1}^{r} T_{\alpha_{\delta(i)}}^{pn_i}
\end{align*}
 then   we conclude that $qn_i=pn_{\delta^{-1}(i)}$ $\text{ for all } i \in I$,  we can regard these  as vectors $v_1=(n_1,...,n_r)$ and $v_2=(n_{\delta^{-1}(1)},...,n_{\delta^{-1}(r)})$ in $\z^r$ and  $pv_1=qv_2$. Then $v_1$ and $v_2$ are in the same  line of $\r^r$, and since $v_2$ is obtained by a permutation of the coordinates of $v_1$, we conclude that $v_1=v_2$ or $v_1=-v_2$, therefore $|p|=|q|$.   
\\
\textbf{Case 2.} Since $\mcgm \unlhd \mcg $ is of finite index, we may assume that $f\in \mcgm$. 
Suppose that $f \not \in \ker(\rhos)$. Since $f\in \mcgm$, $\rhos(f)$ fixes each subsurface $\widehat{S_j} $ of $\widehat{\ss}$. Let $J=\{1,...,k\}$, for every $j\in J$, let $\tilde{f}_j= \rhos(f)|_{\widehat{S_j}} \colon \widehat{S}_j \to \widehat{S}_j.$ 
Note that $\rhos(g)$ may permute the subsurfaces $\widehat{S_i}$ of $\widehat{\ss}$, let $\tilde{g}_j=\rhos(g)|_{\widehat{S}_j}$ for each $j\in J$ and  let $\gamma\in \Sigma_k$ such that
$\tilde{g}_j \colon \widehat{S}_j \to \widehat{S}_{\gamma(j)}$, $ \text{ for all } j \in J$. 
By hypothesis  we have that  $\rhos(f)^q=\rhos(g) \rhos(f)^p\rhos(g)^{-1}$, then 
\begin{align}\label{conjugate-red}
\tilde{g}_j \tilde{f}_j^p\tilde{g}_j^{-1}=\tilde{f}_{\gamma(j)}^q, & \;\;\; \text{ for all } j\in J.
\end{align}  
Since $f\in \mcgm$ and $\rhos(f)\neq Id$, for some $l\in J$, $\tilde{f}_l$ is pseudo-Anosov (see Remark \ref{rem-image-pseudo}). Let $x>0$  be the minimum positive integer such that $\gamma^x(l)=l$,  from (\ref{conjugate-red}) it follows that 
\begin{align*}
\tilde{g}_{\gamma^i(l)}\tilde{f}_{\gamma^i(l)}^p \tilde{g}_{\gamma^i(l)}^{-1}=\tilde{f}_{\gamma^{i+1}(l)}^q, \;\;\; \text{ for all } i\in \{0,1,...,x\}. 
\end{align*}
Then $\tilde{f}_{\gamma^i(l)}$ is pseudo-Anosov for each $i\in \{0,1,...,x-1\}$.  Observe that $\widehat{S}_l$  and $\widehat{S}_{\gamma^i(l)}$ are homeomorphic for all $i\in\{0,1,...,{x-1}\}$, then we can apply Proposition \ref{pAnosov-cond-c}, therefore $|p|=|q|$.
\end{proof}

\subsection{Description of commensurators}

We denote by $C_G(f)$ and  $N_G(f)$  the centralizer and normalizer respectively  of the subgroup $\langle f\rangle $ in $G$.
\begin{thm} \cite[Thm. 6.1]{bonatti-paris}\label{f^m=g^m}
 Let  $S$ be an orientable compact surface  with finitely many punctures.  Let $G \subseteq \mcg$ be a pure subgroup. If $f,g\in G $ are such that $f^t=g^t$ for some $t\geq 1$, then $f=g$.
\end{thm}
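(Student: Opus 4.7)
The plan is to bring $f$ and $g$ into a common canonical form and then extract the $t$-th root factor by factor. The key observation is that, for a pure element, the canonical form of Theorem \ref{canonical-descom-mod} is a product of pairwise commuting factors with disjoint support, so $(\cdot)^t$ distributes termwise and uniqueness is reduced to uniqueness on each summand.

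First, by Lemma \ref{lem-red-syst}(i) applied to $\langle f^t\rangle \subseteq \langle f\rangle$ (and similarly for $g$), we have $\sigma(f)=\sigma(f^t)=\sigma(g^t)=\sigma(g)$. Call this common simplex $\sigma$, with vertices $\alpha_1,\dots,\alpha_r$ and $\ss=S_1\cup\cdots\cup S_k$. Purity ensures $f$ and $g$ do not permute the components of $\ss$ and fix each $\alpha_j$ with orientation, so $f,g\in\mcg_\sigma^0$. Theorem \ref{canonical-descom-mod}, with the integer $p$ equal to $1$ as is possible in the pure setting, yields canonical forms
\begin{equation*}
f=\prod_{i=1}^k \eta_{S_i}(\bar{f}_i)\cdot \prod_{j=1}^r T_{\alpha_j}^{n_j},\qquad g=\prod_{i=1}^k \eta_{S_i}(\bar{g}_i)\cdot \prod_{j=1}^r T_{\alpha_j}^{m_j},
\end{equation*}
where each $\bar{f}_i,\bar{g}_i\in\Gamma(S_i)$ is either the identity or pseudo-Anosov.

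The factors above have pairwise disjoint supports and hence commute with one another, so raising to the $t$-th power distributes termwise. Combining $f^t=g^t$ with the homomorphism $\rhoso\colon \mcg_\sigma^0\to \prod_i\mcgsiqi$, whose kernel is generated by the $T_{\alpha_j}$ (Proposition \ref{cutting-homom}), the hypothesis splits into two independent conclusions:
\begin{equation*}
\theta_{S_i}(\bar{f}_i)^t=\theta_{S_i}(\bar{g}_i)^t\text{ in }\mcgsiqi\text{ for each }i,\qquad tn_j=tm_j\text{ for each }j.
\end{equation*}
The second gives $n_j=m_j$ immediately since $t\geq 1$. It therefore remains to upgrade the first conclusion to $\bar{f}_i=\bar{g}_i$ for every $i$, after which the canonical forms of $f$ and $g$ agree and $f=g$ follows.

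Write $\tilde{f}_i:=\theta_{S_i}(\bar{f}_i)$ and $\tilde{g}_i:=\theta_{S_i}(\bar{g}_i)$; by Remark \ref{rem-image-pseudo} each is either the identity or pseudo-Anosov on $\widehat{S}_i$. The mixed case is impossible since no nontrivial power of a pseudo-Anosov is the identity, and the identity--identity case is trivial. The main obstacle is the case of two pseudo-Anosovs: showing $\tilde{f}_i^t=\tilde{g}_i^t$ forces $\tilde{f}_i=\tilde{g}_i$. The strategy is the classical uniqueness of roots of pseudo-Anosovs. The common $t$-th power determines both the dilatation, via extraction of a positive real $t$-th root as in the proof of Proposition \ref{pAnosov-cond-c}, and the stable/unstable measured foliations. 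Hence $\tilde{f}_i\tilde{g}_i^{-1}$ preserves this pair of foliations and lies in their stabilizer, a virtually cyclic subgroup generated by a pseudo-Anosov root of $\tilde{f}_i$; the torsion-freeness inherent to purity then forces $\tilde{f}_i\tilde{g}_i^{-1}$ to be trivial, completing the argument.
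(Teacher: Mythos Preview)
The paper does not give its own proof of this theorem; it is quoted verbatim from Bonatti--Paris \cite[Thm.~6.1]{bonatti-paris} and used as a black box. So there is no in-paper argument to compare against, and your attempt should be judged on its own.

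Your overall strategy---reduce to a common canonical reduction system, split into the Dehn-twist part and the subsurface parts via $\rhoso$, and handle each pseudo-Anosov factor separately---is the right one and is essentially how the Bonatti--Paris proof proceeds. Two points, however, need tightening.

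First, the splitting into ``$\tilde f_i^t=\tilde g_i^t$'' and ``$tn_j=tm_j$'' is not as independent as you present it. Applying $\rhoso$ gives the first conclusion immediately, but the second does \emph{not} follow yet: the factors $\eta_{S_i}(\bar f_i^t\bar g_i^{-t})$ can contribute Dehn twists about the $\alpha_j$ (boundary curves of $S_i$ become $\alpha_j$'s in $S$), so one cannot read off $tn_j=tm_j$ before controlling the subsurface pieces. The clean fix is to \emph{first} prove $\tilde f_i=\tilde g_i$ for all $i$, so that $\rhoso(f)=\rhoso(g)$; then $fg^{-1}\in\ker\rhoso\cong\z^r$, which is central in $\mcg_\sigma^0$, and $f^t=g^t$ forces $(fg^{-1})^t=\mathrm{id}$, hence $fg^{-1}=\mathrm{id}$.

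Second, and more seriously, your final step for the pseudo-Anosov factors is underjustified. You correctly argue that $\tilde f_i\tilde g_i^{-1}$ preserves both invariant measured foliations with dilatation $1$, hence has finite order in the (virtually cyclic) stabilizer. But ``torsion-freeness inherent to purity'' does not obviously apply: $\tilde f_i$ and $\tilde g_i$ are each pseudo-Anosov, yet their product $\tilde f_i\tilde g_i^{-1}$ lives in $\Gamma(\widehat S_i,\mathcal Q_i)$, which has torsion in general, and a product of pure elements need not be pure. What saves you is that $fg^{-1}\in G$ is pure (since $G$ is a pure \emph{subgroup}), $fg^{-1}\in\mcg_\sigma^0$, and the components of $\rhoso$ applied to a pure element are again pure (this is the observation the paper records just before (\ref{cutting-hom-m}), citing Ivanov). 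Thus $\tilde f_i\tilde g_i^{-1}=\rhoso(fg^{-1})_i$ is pure, and a pure element of finite order is the identity. You should make this step explicit; as written, the appeal to purity is a genuine gap.
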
 
\begin{lem}\label{lem-n(f)=n(fk)}
 Let  $S$ be an orientable compact surface  with finitely many punctures. Let  $f\in \mcgm$, $t\in \z-\{0\} $ and let $\Gamma$ be either $\mcg$ or $\mcgm$, then  
 $$ C_{\Gamma}(f)=C_{\Gamma}(f^t)\;\; \text{ and }\;\; N_{\Gamma}(f)=N_{\Gamma}(f^t). $$
\end{lem}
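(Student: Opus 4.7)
The plan is to establish both equalities by first noting the trivial inclusions, then using the Bonatti--Paris rigidity result (Theorem \ref{f^m=g^m}) to obtain the reverse inclusions. The crucial point is that although $g$ is allowed to live in $\Gamma=\mcg$, the conjugate $gfg^{-1}$ still lies in the pure subgroup $\mcgm$, where root extraction is unambiguous.

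First I would reduce to the case $t\geq 1$. Since $\langle f^t\rangle = \langle f^{|t|}\rangle$, both the centralizer and the normalizer of $\langle f^t\rangle$ depend only on $|t|$. If $f=1$ the statement is trivial, so I may assume $f\neq 1$; Theorem \ref{thm-modm-pure} then implies $|f|=\infty$, because $\mcgm$ is torsion free for $m\geq 3$. Under this reduction, the inclusions $C_{\Gamma}(f)\subseteq C_{\Gamma}(f^t)$ and $N_{\Gamma}(f)\subseteq N_{\Gamma}(f^t)$ are immediate: any element commuting with $f$ commutes with $f^t$, and if $g\in N_{\Gamma}(f)$ then $gfg^{-1}=f^{\varepsilon}$ for some $\varepsilon\in\{\pm 1\}$ (as $f$ has infinite order), so $gf^tg^{-1}=f^{\varepsilon t}\in\langle f^t\rangle$.

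For the reverse inclusions, the key observation is that $\mcgm$ is normal in $\mcg$, being the kernel of $\mcg\to \mathrm{Aut}(H_1(S,\z/m))$. Therefore, for any $g\in\Gamma$ and any $f\in\mcgm$, the conjugate $h:=gfg^{-1}$ belongs to $\mcgm$, which is a pure subgroup. Now if $g\in C_{\Gamma}(f^t)$, then
\[
h^t = gf^tg^{-1}=f^t,
\]
and Theorem \ref{f^m=g^m} applied to $h,f\in\mcgm$ yields $h=f$; hence $g\in C_{\Gamma}(f)$. Similarly, if $g\in N_{\Gamma}(f^t)$, the equality $g\langle f^t\rangle g^{-1}=\langle f^t\rangle$ together with $|f|=\infty$ forces $gf^tg^{-1}=f^{\varepsilon t}$ for some $\varepsilon\in\{\pm 1\}$, so
\[
h^t = f^{\varepsilon t} = (f^{\varepsilon})^t.
\]
Since both $h$ and $f^{\varepsilon}$ lie in the pure subgroup $\mcgm$, Theorem \ref{f^m=g^m} gives $h=f^{\varepsilon}$, so $gfg^{-1}\in\langle f\rangle$ and $g\in N_{\Gamma}(f)$.

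The only nontrivial ingredient is the rigidity theorem of Bonatti--Paris, which upgrades the identity of $t$-th powers into an identity of elements. There is no real obstacle beyond making sure (a) we may work with $t\geq 1$, (b) $f$ has infinite order so that the generators of $\langle f^t\rangle$ are exactly $f^{\pm t}$, and (c) all conjugates remain inside the pure group $\mcgm$ where Theorem \ref{f^m=g^m} applies; each of these is handled cleanly by the torsion-freeness of $\mcgm$ and its normality in $\mcg$.
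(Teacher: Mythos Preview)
Your proof is correct and follows essentially the same approach as the paper: both arguments reduce to $t\geq 1$, use the trivial inclusions in one direction, and for the reverse inclusions invoke the Bonatti--Paris rigidity (Theorem~\ref{f^m=g^m}) together with normality of $\mcgm$ in $\mcg$ to deduce $gfg^{-1}=f^{\varepsilon}$ from $(gfg^{-1})^t=(f^{\varepsilon})^t$. Your write-up is in fact slightly more careful in spelling out why $|f|=\infty$ and why the conjugate remains in $\mcgm$, but the underlying argument is identical.
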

\begin{proof}
  Suppose that $t\geq 1$.   Since $N_{\mcgm}(f)\subseteq N_{\mcgm}(f^t)$, we need to prove that $N_{\mcgm}(f^t)\subseteq N_{\mcgm}(f)$. 
  If $h\in N_{\mcgm}(f^t)$,  we have
  \begin{align*}
    (hfh^{-1})^t =(f^i)^t
   \end{align*}
for some $i \in \{1,-1\}$.   Since $f^i, hfh^{-1}\in \mcgm$ and $\mcgm$ is pure (Thm. \ref{thm-modm-pure}), we can apply  Theorem \ref{f^m=g^m} and we conclude that
  $$hfh^{-1}=f^i,$$
therefore, $h\in N_{\mcgm}(f)$. Thus $\mcgm$ is a  normal subgroup of $\mcg$, we can use Theorem \ref{f^m=g^m} in similar way to prove that $N_{\mcg}(f)=N_{\mcg}(f^t)$.  We have the proof for the centralizers by taking $i=1$. 
\end{proof}
\begin{lem} \label{lem-norm-dehnt}
Let  $S$ be an orientable compact surface  with finitely many punctures. Let $\sigma\in \cs$ with vertices $\alpha_1$,...,$\alpha_r$, and $f=\Pi_{i=1}^r T_{\alpha_i}^{n_i} $, with $n_i\in \z-\{0\}$, then for any $k\neq 0$, $$N_{\mcg}(f)=N_{\mcg}(f^k).$$ 
\end{lem}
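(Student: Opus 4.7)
The inclusion $N_{\mcg}(f)\subseteq N_{\mcg}(f^k)$ is immediate, since $g\langle f\rangle g^{-1}=\langle f\rangle$ forces $g\langle f^k\rangle g^{-1}=\langle f^k\rangle$. For the reverse inclusion, I will start from $g\in N_{\mcg}(f^k)$ and show that conjugation by $g$ sends $f$ to $f^{\pm 1}$. Since $n_i\neq 0$ for all $i$ and the $T_{\alpha_i}$ generate a free abelian group, $f$ (and hence $f^k$) has infinite order; therefore $gf^kg^{-1}=(f^k)^\epsilon$ for some $\epsilon\in\{\pm 1\}$. (Alternatively, this is immediate from Condition (C), Proposition~\ref{cond-c-mod}.)

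The next step is to see that $g$ permutes the curves $\alpha_1,\dots,\alpha_r$. Applying Lemma~\ref{gs=s-reduct} with the pair $(f^k,g)$ in place of $(f,g)$ gives $g\,\sigma(f^k)=\sigma(f^k)$. On the other hand, $\sigma(f^k)=\sigma(f)=\sigma$: indeed, for a sufficiently divisible $N$ we have $f^N,f^{kN}\in \mcgm$, and by Lemma~\ref{lem-red-syst}(i) both $\sigma(f)$ and $\sigma(f^k)$ equal $\sigma(\langle f^{kN}\rangle)$; since $f^{kN}$ is pure with canonical form consisting only of Dehn twists about the $\alpha_j$'s, Ivanov's description of the canonical reduction system gives $\sigma(f^{kN})=\{\alpha_1,\dots,\alpha_r\}$. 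Hence $g\sigma=\sigma$, so there is a permutation $\delta\in\Sigma_r$ with $g(\alpha_i)=\alpha_{\delta(i)}$, and by property (2) of Dehn twists $gT_{\alpha_i}g^{-1}=T_{\alpha_{\delta(i)}}$.

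Now I just compute. Using commutativity of the $T_{\alpha_i}$ (property (3), since $i(\alpha_i,\alpha_j)=0$),
\[
gf^{k}g^{-1}=\prod_{i=1}^{r}T_{\alpha_{\delta(i)}}^{kn_i}=\prod_{j=1}^{r}T_{\alpha_j}^{k\,n_{\delta^{-1}(j)}},
\]
while $f^{\epsilon k}=\prod_{j=1}^r T_{\alpha_j}^{\epsilon k n_j}$. Equating these two expressions inside the free abelian group generated by $T_{\alpha_1},\dots,T_{\alpha_r}$ (free abelian by property (1) plus commutativity) yields $k\,n_{\delta^{-1}(j)}=\epsilon k\,n_j$ for every $j$, and hence $n_{\delta^{-1}(j)}=\epsilon n_j$. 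Therefore
\[
gfg^{-1}=\prod_{j=1}^{r}T_{\alpha_j}^{n_{\delta^{-1}(j)}}=\prod_{j=1}^{r}T_{\alpha_j}^{\epsilon n_j}=f^{\epsilon},
\]
so $g\in N_{\mcg}(f)$, completing the proof.

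\textbf{Main obstacle.} The argument is essentially a bookkeeping exercise once $g\sigma=\sigma$ is known; the one point that needs care is the identification $\sigma(f^k)=\{\alpha_1,\dots,\alpha_r\}$. This is handled by passing to a power lying in $\mcgm$ and invoking Lemma~\ref{lem-red-syst}(i), so that Ivanov's description of canonical reduction systems for pure elements applies. Everything else reduces to the free abelian structure of multitwists on disjoint curves.
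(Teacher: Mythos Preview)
Your proof is correct and follows essentially the same approach as the paper: establish that $g$ stabilizes $\sigma$, use the conjugation formula $gT_{\alpha_i}g^{-1}=T_{g(\alpha_i)}$ to compare exponents inside the free abelian group $\langle T_{\alpha_1},\dots,T_{\alpha_r}\rangle$, and deduce $gfg^{-1}=f^{\pm 1}$. Your treatment is in fact a bit more careful than the paper's in justifying $\sigma(f^k)=\sigma$ (via passage to a pure power and Lemma~\ref{lem-red-syst}(i)), which the paper's proof invokes only implicitly.
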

\begin{proof}
Let $g\in N_{\mcg}(f^k)$, we will  prove that $g\in N_{\mcg}(f)$. By Lemma \ref{lem-red-syst}, $g(\sigma)=\sigma$ and $g$ may permute the classes  $\alpha_1,...,\alpha_r$. Let $\delta\in \Sigma_r$ such that $g(\alpha_i)=\alpha_{\delta(i)}$ for all $i$. By the results about Dehn twists given in Section \ref{sec-prelim} and  since $f^{jk}=gf^kg^{-1}$, for some $j\in\{1,-1\}$, we have  
\begin{align*}
\Pi_{i=1}^{r} T_{\alpha_i}^{jkn_i} &= g\Pi_{i=1}^{r} T_{\alpha_i}^{kn_i}g^{-1}\\
&= \Pi_{i=1}^{r} T_{\alpha_{\delta(i)}}^{kn_i},
\end{align*}
 then we  conclude that 
   $jkn_i=kn_{\delta(i)}$ $\text{ for all } i$,  so $jn_i=n_{\delta(i)}$ for all $i$. 
  On the other hand, we have that 
   \begin{align*}
gfg^{-1}&= g\Pi_{i=1}^{r} T_{\alpha_i}^{n_i}g^{-1}\\
& =\Pi_{i=1}^{r} T_{\alpha_{\delta(i)}}^{n_i},
\end{align*}
since $jn_i=n_{\delta(i)}$ then 
 \begin{align*}
gfg^{-1}=\Pi_{i=1}^{r} T_{\alpha_{\delta(i)}}^{jn_{\delta(i)}}=f^{j},
\end{align*}
therefore $g\in N_{\mcg}(f)$. 
\end{proof}

\begin{lem} \cite[Lem. 4.2]{luck-cat(0)-vyc}
 \label{lem-norm-cond-c}
Suppose that $G$ satisfies Condition (C). Then, for any $C\in \C_G$ there is a nested sequence of subgroups
$$N_G(C)\subseteq N_G(2! C) \subseteq N_G(3!C) \subseteq N_{G}(4!C)\subseteq \cdots$$
where $k!C$ is the subgroup of $C$ given by $\{h^{k!}| h\in C\}$,
observe that
$$N_G[C]=\bigcup_{k\geq 1} N_G(k!C).  $$
The subgroup $N_G(K!C)$ denotes the normalizer of $k!C$ in $G$. 
\end{lem}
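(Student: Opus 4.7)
The plan is to fix a generator $c$ of $C$, so that $k!C=\langle c^{k!}\rangle$, and then dispatch the two easy inclusions directly. For the nesting $N_G(k!C)\subseteq N_G((k+1)!C)$, any $g$ satisfying $gc^{k!}g^{-1}=c^{\pm k!}$ raises to the $(k+1)$-st power to give $gc^{(k+1)!}g^{-1}=c^{\pm(k+1)!}$. For the inclusion $\bigcup_{k\geq 1}N_G(k!C)\subseteq N_G[C]$, if $g\in N_G(k!C)$ then $gCg^{-1}\cap C$ contains $\langle c^{k!}\rangle$, which is infinite, so $g$ commensurates $C$ in the sense of \eqref{nh}.

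The substantive direction is $N_G[C]\subseteq\bigcup_{k\geq 1}N_G(k!C)$. Given $g\in N_G[C]$, the group $gCg^{-1}\cap C$ is an infinite cyclic subgroup of $C$, hence equals $\langle c^{q}\rangle$ for some $q\neq 0$; since $c^{q}\in gCg^{-1}$ one can write $c^{q}=gc^{p}g^{-1}$ for some $p\neq 0$. This is precisely the hypothesis of Condition (C) with $h=c$, so $|p|=|q|$, and therefore $gc^{p}g^{-1}=c^{\varepsilon p}$ for some $\varepsilon\in\{\pm 1\}$. Now choose any integer $k\geq |p|$, so that $p$ divides $k!$, and write $k!=mp$. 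Then
$$gc^{k!}g^{-1}=(gc^{p}g^{-1})^{m}=c^{\varepsilon mp}=c^{\varepsilon k!},$$
which shows $g\in N_G(k!C)$, as required.

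The only real obstacle in this argument is the passage from ``$g$ conjugates some nonzero power of $c$ into $C$'' to ``$g$ conjugates $c^{k!}$ to $c^{\pm k!}$''. Without Condition (C) one only has $gc^{p}g^{-1}=c^{q}$ with no control over the ratio $q/p$, in which case $(gc^{p}g^{-1})^{m}=c^{qm}$ is not of the form $c^{\pm pm}$, and a factorial exponent on the left need not produce a factorial exponent on the right. Condition (C) is exactly the rigidity needed to make the argument go through, and it is the reason Proposition \ref{cond-c-mod} was established first for $\mcg$.
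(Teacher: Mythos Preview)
Your proof is correct. The paper does not supply its own argument for this lemma; it simply cites \cite[Lem.~4.2]{luck-cat(0)-vyc}, so there is nothing to compare against. Your treatment of all three parts is sound: the nesting uses only that the automorphism group of an infinite cyclic group is $\{\pm 1\}$, the inclusion $\bigcup_k N_G(k!C)\subseteq N_G[C]$ is immediate, and the reverse inclusion is exactly where Condition~(C) enters, as you explain. One very minor point: in the nesting step you could remark that $gc^{k!}g^{-1}=c^{\pm k!}$ follows just from $\langle c^{k!}\rangle\cong\mathbb{Z}$ having only $\pm 1$ as automorphisms, so Condition~(C) is not needed there; you implicitly use this but do not say it.
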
 
We will follow the same notation as Section \ref{section-build}. Let $\C_G$ be the set of infinite cyclic subgroups of $G$. 
\begin{prop}\label{prop-comm-mcgm}
Let $S$ be an orientable closed  surface with finitely many punctures  and $\chi(S)<0$. Let $m\geq 3$ be fixed. Let $C=\langle g\rangle \in \C_{\mcg}$ and  $n\in \mathbb{N}$ such that $g^n\in \mcgm$. Then  
$$N_{\mcg}[C]=N_{\mcg}(g^n). $$ 
Furthermore, the subgroup $\langle g^n\rangle$ can be choosen maximal in $\C_{\mcgm}$. 
\end{prop}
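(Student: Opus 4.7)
The proposition has two claims: the set-theoretic equality $N_{\mcg}[C]=N_{\mcg}(g^n)$ for any $n$ with $g^n\in\mcgm$, and the existence of a choice of $n$ making $\langle g^n\rangle$ maximal in $\C_{\mcgm}$. My plan is to handle the first claim by combining Proposition~\ref{cond-c-mod} (condition (C) for $\mcg$) with Lemma~\ref{lem-n(f)=n(fk)}, and the second by a structural argument inside the pure subgroup $\mcgm$.

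The inclusion $N_{\mcg}(g^n)\subseteq N_{\mcg}[C]$ is immediate: any $h$ normalizing $\langle g^n\rangle$ satisfies $\langle g^n\rangle\subseteq C\cap h^{-1}Ch$, which is therefore infinite. For the reverse inclusion, let $h\in N_{\mcg}[C]$; the infiniteness of $C\cap h^{-1}Ch$ produces nonzero integers $p,q$ with $h^{-1}g^ph=g^q$. Proposition~\ref{cond-c-mod} forces $|p|=|q|$, so $h^{-1}g^ph=g^{\pm p}$. Raising to the $n$-th power gives $h^{-1}(g^n)^ph=(g^n)^{\pm p}$, whence $h\in N_{\mcg}((g^n)^p)$; since $g^n\in\mcgm$, Lemma~\ref{lem-n(f)=n(fk)} identifies $N_{\mcg}((g^n)^p)$ with $N_{\mcg}(g^n)$, completing the equality.

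For the maximality, I would take $n$ to be the least positive integer with $g^n\in\mcgm$, so $\langle g\rangle\cap\mcgm=\langle g^n\rangle$. Suppose for contradiction that $\langle g^n\rangle\subsetneq\langle d\rangle$ for some $d\in\mcgm$, say $d^k=g^n$ with $k\geq 2$. Because $g$ centralizes $g^n=d^k$ and $d\in\mcgm$, Lemma~\ref{lem-n(f)=n(fk)} yields $C_{\mcg}(d)=C_{\mcg}(d^k)$, so $g$ and $d$ commute. Setting $e=\gcd(k,n)$, $k=ek'$, $n=en'$, a Bezout calculation in the abelian subgroup $\langle g,d\rangle$, combined with Theorem~\ref{f^m=g^m} applied to coinciding powers landing in the pure subgroup $\mcgm$, produces $w\in\mcg$ with $w^{k'}=g$ and $w^{n'}=d$. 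If $k'=1$ then $d=g^{n'}\in\mcgm$, so minimality of $n$ gives $n\mid n'$, forcing $k=1$, a contradiction. If $k'\geq 2$, then $g=w^{k'}$ is a proper power in $\mcg$, so one replaces $C$ by the commensurable overgroup $\langle w\rangle$ (which preserves $N_{\mcg}[C]$) and iterates the argument with $w$ in place of $g$.

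The main obstacle is to show this iteration terminates, equivalently that an element of $\mcgm$ has bounded root depth inside $\mcg$. I would establish this using the Nielsen--Thurston canonical form (Theorem~\ref{canonical-descom-mod}): on each pseudo-Anosov component, a $k$-th root must have dilatation $\lambda^{1/k}$ bounded below by the minimum pseudo-Anosov dilatation $\lambda_{\min}$ on that subsurface, giving $k\leq \log\lambda/\log\lambda_{\min}$; on the Dehn twist piece, a $k$-th root requires $k$ to divide each twist exponent. These uniform bounds cap the ascending chain and deliver the maximal $\langle g^n\rangle\in\C_{\mcgm}$ required by the proposition.
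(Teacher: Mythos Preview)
Your argument for the equality $N_{\mcg}[C]=N_{\mcg}(g^n)$ is correct and is essentially the paper's proof: the paper routes through Lemma~\ref{lem-norm-cond-c} (itself a consequence of condition~(C)) and then uses Lemma~\ref{lem-n(f)=n(fk)} to see that the ascending chain of normalizers stabilizes at $N_{\mcg}(g^n)$, while you invoke condition~(C) directly via Proposition~\ref{cond-c-mod} to reach the same conclusion.

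The maximality argument, however, has a real gap at the Bezout step. From $d^{k}=g^{n}$ with $e=\gcd(k,n)$ one gets $(g^{n'})^{e}=(d^{k'})^{e}$, and you want Theorem~\ref{f^m=g^m} to yield $g^{n'}=d^{k'}$ and hence the common root $w$. But that theorem requires \emph{both} elements to lie in a pure subgroup; while $d^{k'}\in\mcgm$, the element $g^{n'}=g^{n/e}$ generally does not, precisely because $n$ was chosen minimal. Without unique $e$-th roots the element $w$ need not exist: the abelian group $\langle g,d\rangle$ is an extension of $\langle d\rangle\cong\z$ by a finite cyclic group and may carry torsion coming from finite-order elements of $\mcg$, in which case $g$ need not be a proper power and your dichotomy and iteration never get started. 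The paper sidesteps this entirely by deferring to Proposition~\ref{prop-modm-maximality}, where one shows that any $\langle g^{n}\rangle\subset\mcgm$ sits inside an explicit \emph{free abelian} subgroup assembled from $\ker\rhosm$ together with the infinite cyclic centralizers of the pseudo-Anosov components of $g^{n}$ (via Lemma~\ref{lem-abelian}); existence and uniqueness of a maximal cyclic overgroup then follow from the structure of $\z^{r}$, with no root extraction required.
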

\begin{proof}
Let $C \in \C_{\mcg}$, $[C]$ its class and suppose that $C=\langle g\rangle$. 
From Lemma \ref{lem-norm-cond-c}, we have that
$$N_{\mcg}(g)\subseteq N_{\mcg}(g^{2!})\subseteq N_{\mcg}(g^{3!})\subseteq \cdots, \;\;\; \text{and}$$ 
\begin{align*}
N_{\mcg}[C]=\bigcup_{k\geq 1} N_{\mcg}(g^{k!}).  
\end{align*} 
Let $n\in \mathbb{N}$  such that $g^n\in \mcgm$, 
by  Lemma \ref{lem-n(f)=n(fk)},   we have that 
$$N_{\mcg}(g)\subseteq N_{\mcg}(g^{2!})\subseteq \cdots \subseteq N_{\mcg}(g^{n!})=N_{\mcg}(g^{(n+k)!}) $$
for any $k\geq1$, and $N_{\mcg}(g^{n!})=N_{\mcg}(g^{n})$,  then   $$N_{\mcg}[C]=N_{\mcg}(g^n).  $$ 
We will prove in Proposition \ref{prop-modm-maximality} that $\langle g^n\rangle$ is contained in a unique  maximal $\overline{C}\in \C_{\mcgm}$, therefore 
$$N_{\mcg}[C]=N_{\mcg}(g^n)=N_{\mcg}(\overline{C}). $$
\end{proof}
 
 By Lemma \ref{lem-norm-dehnt}, we have: 
\begin{cor} \label{cor-comm-dehn}
Let  $S$ be an orientable compact surface  with finitely many punctures. Let $\sigma\in \cs$ with vertices $\alpha_1$,...,$\alpha_r$, and $f=\Pi_{i=1}^r T_{\alpha_i}^{n_i} $, with $n_i\in \z-\{0\}$. Then   
$$N_{\mcg}[\langle f\rangle]=N_{\mcg}(f). $$ 
\end{cor}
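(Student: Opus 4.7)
The plan is to assemble the corollary as an immediate consequence of the three preceding results and so no new surface-topological input is needed. The inclusion $N_{\mcg}(f) \subseteq N_{\mcg}[\langle f\rangle]$ is a general fact about commensurators (any element that normalizes $\langle f\rangle$ in particular commensurates it), so all the work lies in the reverse inclusion.

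For the reverse inclusion, I would apply Lemma \ref{lem-norm-cond-c} to the infinite cyclic subgroup $C=\langle f\rangle$. This is legal precisely because Proposition \ref{cond-c-mod} has established that $\mcg$ satisfies Condition (C) for every orientable compact surface of negative Euler characteristic (with finitely many punctures). The lemma then yields the nested description
$$ N_{\mcg}[\langle f\rangle] \;=\; \bigcup_{k\geq 1} N_{\mcg}(f^{k!}). $$

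Now I would invoke Lemma \ref{lem-norm-dehnt} to collapse this union. That lemma applies exactly to elements of the form $f=\prod_{i=1}^{r} T_{\alpha_i}^{n_i}$ with $n_i\in \z-\{0\}$ supported on a simplex $\sigma \in \cs$, and it asserts $N_{\mcg}(f) = N_{\mcg}(f^k)$ for every $k\neq 0$. Applying this with $k=k!$ for each $k\geq 1$ gives $N_{\mcg}(f^{k!}) = N_{\mcg}(f)$ uniformly, so the increasing union above is a constant chain and equals $N_{\mcg}(f)$. Combining with the trivial inclusion yields the desired equality.

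The main potential obstacle would have been verifying that the relation $gfg^{-1}=f^{j}$ derived from commensuration forces $j\in\{1,-1\}$ when $f$ is a multi-twist; but this is exactly the content of Condition (C) for multi-twists, which has already been incorporated into Lemma \ref{lem-norm-dehnt} via the coordinate-vector argument on the exponents $(n_1,\ldots,n_r)$. Hence the corollary follows by routine assembly and there is no further case analysis to perform.
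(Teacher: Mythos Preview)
Your proof is correct and follows exactly the approach the paper intends: the paper records the corollary as an immediate consequence of Lemma~\ref{lem-norm-dehnt}, tacitly relying (as in the proof of Proposition~\ref{prop-comm-mcgm}) on the description $N_{\mcg}[C]=\bigcup_{k\geq 1} N_{\mcg}(k!C)$ from Lemma~\ref{lem-norm-cond-c}, which is available because $\mcg$ satisfies Condition~(C). You have simply made these steps explicit.
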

 
 \subsection{Description of normalizers  } \label{sec-normalizers}
For surfaces with empty boundary, by the Nielsen-Thurston Classification Theorem, infinite order elements are pseudo-Anosov or reducible. 
\begin{thm}\label{normalizer-pseudoA} \cite[Thm. 1]{McCarthy}
Let $S$ be an orientable closed  surface  with finitely many  punctures. Let $f\in \mcg$ be a pseudo-Anosov mapping class. The centralizer $C_{\mcg}(f)$ is a finite extension of an infinite cyclic group. The normalizer, $N_{\mcg}(f)$ is either equal to $C_{\mcg}(f)$ or contains $C_{\mcg}(f)$ as a normal subgroup of index 2.
\end{thm}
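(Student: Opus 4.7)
The plan is to exploit the Teichm\"uller geodesic axis of $f$ together with the properly discontinuous action of $\mcg$ on Teichm\"uller space $\mathcal{T}(S)$. Since $f$ is pseudo-Anosov, it has a unique invariant Teichm\"uller geodesic $A_f \subset \mathcal{T}(S)$ on which it acts as translation by $\log\lambda$, where $\lambda>1$ is the dilatation of a pseudo-Anosov representative of $f$.

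For the centralizer, take $g\in C_{\mcg}(f)$; then $gA_f$ is the invariant axis of $gfg^{-1}=f$, so $gA_f=A_f$ by uniqueness. The induced isometric action on $A_f\cong\r$ must be a translation, because a reflection of $\r$ does not commute with the nontrivial translation induced by $f$. This yields a homomorphism $\tau\colon C_{\mcg}(f)\to\r$ with $\tau(f)=\log\lambda$. Because the $\mcg$-action on $\mathcal{T}(S)$ is properly discontinuous, orbits are discrete; restricting to a basepoint $x_0\in A_f$, the image $\tau(C_{\mcg}(f))$ is a discrete subgroup of $\r$ containing $\log\lambda$, hence infinite cyclic. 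The kernel of $\tau$ is the $\mcg$-stabilizer of $x_0$, which is finite by proper discontinuity. Therefore $C_{\mcg}(f)$ is a finite extension of $\z$.

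For the normalizer, any $g\in N_{\mcg}(f)$ satisfies $g\langle f\rangle g^{-1}=\langle f\rangle$, and since the only generators of an infinite cyclic group are $f$ and $f^{-1}$, we get $gfg^{-1}=f^{\epsilon(g)}$ with $\epsilon(g)\in\{+1,-1\}$. In particular $gA_f=A_f$, and the induced action of $g$ on $A_f$ is a translation when $\epsilon(g)=+1$ and a reflection when $\epsilon(g)=-1$. The assignment $g\mapsto\epsilon(g)$ is then a homomorphism $N_{\mcg}(f)\to\z/2$ whose kernel is precisely $C_{\mcg}(f)$, whence $[N_{\mcg}(f):C_{\mcg}(f)]\in\{1,2\}$.

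The main obstacle, and the place where substantive Teichm\"uller theory is invoked, is the existence and uniqueness of the invariant axis $A_f$ together with the proper discontinuity of the $\mcg$-action on $\mathcal{T}(S)$; once these are taken for granted (both are classical and implicit in the references already cited in the paper), the remainder of the argument is a formal manipulation of isometries of $\r$ and reduces to the elementary observation about generators of infinite cyclic subgroups used for normalizers elsewhere in Section \ref{sec-commen}.
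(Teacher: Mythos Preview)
The paper does not supply its own proof of this theorem; it is quoted verbatim as \cite[Thm.~1]{McCarthy}. Consequently there is no in-paper argument to compare against. Your proof via the Teichm\"uller axis is correct and is one of the standard routes to this result (it is essentially the argument in \cite[Sec.~13]{margalit}, which the paper already cites). The ingredients you invoke --- uniqueness of the invariant Teichm\"uller geodesic for a pseudo-Anosov (Bers) and proper discontinuity of the $\mcg$-action on $\mathcal{T}(S)$ --- are both classical and available in the references the paper lists.

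By way of comparison, McCarthy's original argument, and the style adopted elsewhere in this paper (see the proof of Proposition~\ref{pAnosov-cond-c}), works directly with the stable and unstable measured foliations rather than with the axis in $\mathcal{T}(S)$: one shows that any $g$ commuting with $f$ must preserve the pair $\{\f^s,\f^u\}$ (up to scaling of the transverse measures), and then uses unique ergodicity and a discreteness argument for the set of dilatations to extract the infinite cyclic quotient. Your Teichm\"uller-geometric approach has the advantage of being shorter and making the $\z/2$ dichotomy for the normalizer transparent as the orientation character on $A_f$; the foliation approach, on the other hand, generalizes more readily to the situation of several pseudo-Anosovs permuted by conjugation, which is exactly what is needed in Proposition~\ref{pAnosov-cond-c}.

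One small point worth tightening: you identify $\ker\tau$ with ``the $\mcg$-stabilizer of $x_0$''. Strictly, $\ker\tau$ is the subgroup of $C_{\mcg}(f)$ fixing $A_f$ pointwise, which is only \emph{contained in} the $\mcg$-stabilizer of $x_0$; that containment is what you actually need for finiteness, so the conclusion stands.
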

 Since $\mcgm$ is torsion free for $m\geq 3$,
for $g\in \mcgm$  a pseudo-Anosov mapping class, we know that $C_{\mcgm}(g)=N_{\mcgm}(g)$ is an infinite cyclic group.\\

 Let $f\in \mcgm$ be  reducible and  $\sigma=\sigma(f)$ with vertices $\alpha_1,...,\alpha_r$. By Lemma \ref{gs=s-reduct}, $C_{\mcg}(f)$ and $N_{\mcg}(f)$ are subgroups of  the stabilizer $\mcg_{\sigma}$. 
Recall that $\mcg_{\sigma}^0$ is  the subgroup of $\mcg_{\sigma}$  that fixes  each $\alpha_i$ with orientation. Denote by $C_{\mcg}(f)^0=C_{\mcg}(f)\cap \mcg_{\sigma}^0$    and $N_{\mcg}(f)^0=N_{\mcg}(f)\cap \mcg_{\sigma}^0$,  the finite index subgroups of $C_{\mcg}(f)$ and $N_{\mcg}(f)$ respectively. 
\begin{prop}\label{prop-centralizer}
Let $S$ be an orientable closed  surface  with finitely many  punctures. Let $f\in \mcgm$ with   $\rhoso(f)=(f_1,...,f_k)$, then
\begin{align}\label{sucesion-centralizer}
\xymatrix{
1\ar[r] & \z^r \ar[r] & C_{\mcg}(f)^0 \ar[r]^(0.4){\rhoso}& \prod_{i=1}^k C_{\Gamma(\widehat{S}_i,\mathcal{Q}_i)}(f_i) \ar[r] & 1 , }
\end{align}
and $C_{\mcg}(f)^0$ has index $\leq 2^k$ in $N_{\mcg}(f)^0$.
\end{prop}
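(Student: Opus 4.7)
The plan is to restrict the cutting homomorphism $\rhoso\colon \mcg_\sigma^0 \to \prod_{i=1}^k \Gamma(\widehat{S}_i,\mathcal{Q}_i)$ to $C_{\mcg}(f)^0$ and $N_{\mcg}(f)^0$. Both subgroups lie in $\mcg_\sigma^0$ by Lemma \ref{gs=s-reduct} (any $g$ with $gfg^{-1}=f^{\pm 1}$ preserves $\sigma=\sigma(f)$). The image of $\rhoso|_{C_{\mcg}(f)^0}$ is clearly contained in $\prod_{i=1}^k C_{\Gamma(\widehat{S}_i,\mathcal{Q}_i)}(f_i)$ because homomorphisms preserve conjugation relations. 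Since every element of $\mcg_\sigma^0$ fixes each $\alpha_j$ with orientation, each $T_{\alpha_j}$ is central in $\mcg_\sigma^0$; hence $\ker\rhoso=\langle T_{\alpha_1},\ldots,T_{\alpha_r}\rangle\cong \z^r$ lies entirely in $C_{\mcg}(f)^0$ and is the kernel of the restricted map.

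The hard part is surjectivity onto $\prod_{i=1}^k C_{\Gamma(\widehat{S}_i,\mathcal{Q}_i)}(f_i)$. Given $(g_1,\ldots,g_k)$ with $g_if_i=f_ig_i$ for each $i$, I would lift each $g_i$ through the surjective corking homomorphism $\theta_{S_i}$ to some $\widetilde{g}_i\in\Gamma(S_i)$ and set $g:=\eta_{\ss}((\widetilde{g}_1,\ldots,\widetilde{g}_k))$. Using the canonical form $f=\eta_{\ss}((\overline{f}_1,\ldots,\overline{f}_k))\cdot\prod_{j=1}^r T_{\alpha_j}^{n_j}$ of Theorem \ref{canonical-descom-mod} (with $p=1$ since $f\in\mcgm$) together with the centrality of the $T_{\alpha_j}$, verifying $gfg^{-1}=f$ reduces to proving $[\widetilde{g}_i,\overline{f}_i]=1$ in each $\Gamma(S_i)$. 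If $f_i=\mathrm{id}$, the canonical form allows taking $\overline{f}_i=\mathrm{id}$, and the commutator is trivially $1$. If $f_i$ is pseudo-Anosov, the commutator $[\widetilde{g}_i,\overline{f}_i]$ sits in the central subgroup $\ker\theta_{S_i}\cong \z^{b_i}$ and, because this kernel is central, is independent of the chosen lift $\widetilde{g}_i$; it therefore defines a homomorphism $\Phi_i\colon C_{\Gamma(\widehat{S}_i,\mathcal{Q}_i)}(f_i)\to \z^{b_i}$ that vanishes on $\langle f_i\rangle$ (since $\overline{f}_i$ commutes with itself). Theorem \ref{normalizer-pseudoA} tells me that $\langle f_i\rangle$ has finite index in $C(f_i)$, and combining this with torsion-freeness of $\z^{b_i}$ forces $\Phi_i=0$. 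I expect this central-extension obstruction argument to be the most delicate step.

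For the index statement, I would consider the composition
\begin{align*}
\Psi\colon N_{\mcg}(f)^0 \xrightarrow{\rhoso} \prod_{i=1}^k N_{\Gamma(\widehat{S}_i,\mathcal{Q}_i)}(f_i) \longrightarrow \prod_{i=1}^k N_{\Gamma(\widehat{S}_i,\mathcal{Q}_i)}(f_i)\big/C_{\Gamma(\widehat{S}_i,\mathcal{Q}_i)}(f_i),
\end{align*}
which is well defined because any $g\in N_{\mcg}(f)^0$ satisfies $gfg^{-1}=f^n$ for some $n$ and so $\rhoso(g)$ normalizes each $\langle f_i\rangle$. Each factor of the codomain has order at most $2$ by Theorem \ref{normalizer-pseudoA}, so $|\mathrm{im}\,\Psi|\leq 2^k$. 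The short exact sequence above yields $C_{\mcg}(f)^0\subseteq\ker\Psi$ immediately. For the reverse inclusion, Condition (C) (Proposition \ref{cond-c-mod}) restricts $n$ to $\{\pm 1\}$; the case $n=-1$ would force $f_i=f_i^{-1}$ for every $i$, hence every $f_i=\mathrm{id}$ (each $f_i$ is identity or infinite-order pseudo-Anosov), reducing $f$ to a product of central $T_{\alpha_j}$'s so that $gfg^{-1}=f$, contradicting $f\neq f^{-1}$ for $f\neq 1$ in the torsion-free group $\mcgm$. Therefore $\ker\Psi=C_{\mcg}(f)^0$ and $[N_{\mcg}(f)^0:C_{\mcg}(f)^0]\leq 2^k$.
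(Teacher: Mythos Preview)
Your proof is correct and reaches the same conclusion as the paper, but the two arguments differ in interesting ways at the key technical step.

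For surjectivity of $\rhoso|_{C_{\mcg}(f)^0}$, the paper writes an arbitrary $g\in\mcg_\sigma^0$ in a canonical form analogous to that of $f$, computes $gfg^{-1}$ explicitly, and reduces the question to whether each $[\overline g_i,\overline f_i]$ vanishes in $\Gamma(S_i)$; it then disposes of the obstruction by asserting that ``any commutator has no Dehn twists about boundary components''. Your argument is structurally the same up to this obstruction, but you eliminate it differently: you observe that $g_i\mapsto[\widetilde g_i,\overline f_i]$ is a well-defined homomorphism $\Phi_i\colon C_{\Gamma(\widehat S_i,\mathcal Q_i)}(f_i)\to\ker\theta_{S_i}\cong\z^{b_i}$ which kills $\langle f_i\rangle$, and then use that $\langle f_i\rangle$ has finite index in the centralizer (Theorem~\ref{normalizer-pseudoA}) together with torsion-freeness of the target to conclude $\Phi_i\equiv 0$. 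This is a genuinely different mechanism: the paper appeals to an abelianization-type fact about boundary twists in $\Gamma(S_i)$, whereas you exploit the specific structure of centralizers of pseudo-Anosov classes. Your route has the advantage of being self-contained within the tools already developed in the paper.

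For the index bound, the paper splits into two cases: if some exponent $n_j\neq 0$ in the canonical form of $f$, the centrality of $T_{\alpha_j}$ in $\mcg_\sigma^0$ forces $N_{\mcg}(f)^0=C_{\mcg}(f)^0$ outright; only when all $n_j=0$ does it invoke the index-$2$ bound from Theorem~\ref{normalizer-pseudoA} on each factor. Your map $\Psi$ handles both cases uniformly. One small simplification: once the exact sequence is established, $\ker\Psi=C_{\mcg}(f)^0$ follows immediately from $(\rhoso)^{-1}\bigl(\prod_i C(f_i)\bigr)=C_{\mcg}(f)^0$, so the detour through Condition~(C) and the $n=-1$ contradiction is not strictly needed.
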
  
\begin{proof}
Write $f$ in its canonical form as in (\ref{canonical-descomp}), 
\begin{align*}
  f=\Pi_{i=1}^{k} \eta_{S_i}(\overline{f}_i) \Pi_{j=1}^{r } T_{\alpha_j}^{n_j},
\end{align*}
by Remark \ref{rem-image-pseudo} we have   
\begin{align}\label{eq-theta-i}
(f_1,...,f_k)=(\theta_{S_1}(\overline{f}_1),...,\theta_{S_k}(\overline{f}_k)).
\end{align}
Let $g\in \mcg_{\sigma}^0$, following the method of Theorem \ref{canonical-descom-mod}, since $g$ fixes each subsurface $S_i$  of $\ss$ and each $\alpha_i$ with orientation,  $g$  can be written as    
\begin{align*}
  g=\Pi_{i=1}^{k} \eta_{S_i}(\overline{g}_i) \Pi_{j=1}^{r } T_{\alpha_j}^{m_j},
\end{align*} 
and $\overline{g}_i$ can be reducible, periodic or pseudo-Anosov, for each $i$. In a similar way as  in Remark \ref{rem-image-pseudo},  
$$
\rhoso(g)=(g_1,...,g_k)=(\theta_{S_1}(\overline{g}_1),...,\theta_{S_k}(\overline{g}_k)).
$$
Thus   
\begin{align}\label{norm=cent}
gfg^{-1}=\Pi_{i=1}^{k} \eta_{S_i}(\overline{g}_i)\eta_{S_i}(\overline{f}_i)\eta_{S_i}(\overline{g}_i)^{-1} \Pi_{j=1}^{r } T_{\alpha_j}^{n_j}, 
\end{align}
then 
  \begin{align*} 
& f=gfg^{-1}\\
\text{if and only if}\;\;\;\;  &\eta_{S_i}(\overline{f}_i)=\eta_{S_i}(\overline{g}_i)\eta_{S_i}(\overline{f}_i)\eta_{S_i}(\overline{g}_i)^{-1}, \;\;\; \text{ for all } i,  \\
\text{if and only if}\;\;\;\;  &\overline{f}_i= \overline{g}_i\overline{f}_i\overline{g}_i^{-1}, \;\;\; \text{ for all } i. 
\end{align*}
 The result follows since by the definition of $ \eta_{S_i}$, its kernel is generated by elements of the form $T_{\beta}T^{-1}_{\gamma}$ and any commutator has no Dehn twists about boundary components.

Now, if $\theta_{S_i}(\overline{f}_i)= \theta_{S_i}(\overline{g}_i)\theta_{S_i}(\overline{f}_i)\theta_{S_i}(\overline{g}_i)^{-1}$, then $\overline{g}_i\overline{f}_i \overline{g}_i^{-1}\overline{f}_i^{-1}\in \ker \theta_{S_i} $ which is generated by Dehn twists about boundary components of $S_i$, then we have $\overline{g}_i\overline{f}_i \overline{g}_i^{-1}\overline{f}_i^{-1}=Id$, therefore 
\begin{align*}
 &\overline{f}_i= \overline{g}_i\overline{f}_i\overline{g}_i^{-1}, \;\;\; \text{ for all } i,\\
 \text{if and only if}\;\;\;\; &\theta_{S_i}(\overline{f}_i)= \theta_{S_i}(\overline{g}_i)\theta_{S_i}(\overline{f}_i)\theta_{S_i}(\overline{g}_i)^{-1}\;\; \text{ for all } i, \\
  \text{if and only if}\;\;\;\; &  f_i= g_if_i g_i^{-1} \;\; \text{ for all } i, 
  \end{align*}
which follows by the equality given in  (\ref{eq-theta-i}). \\
Since there are  no restrictions for  $m_j$  with $j\in \{1,...,r\}$, we  conclude (\ref{sucesion-centralizer}). \\
From the equality (\ref{norm=cent}),  if some $n_j\neq 0$, then $C_{\mcg}(f)^0=N_{\mcg}(f)^0$. \\On the other hand, if $n_i=0, \;\text{ for all } i$,   since each $f_i$ is the identity or pseudo-Anosov, in case $f_i$ is pseudo-Anosov,  $C_{\mcgsi}(f_i)$ is a subgroup of index $1$ or $2$ in $N_{\mcgsi}(f_i)$  (Theorem \ref{normalizer-pseudoA}), therefore we conclude that  $C_{\mcg}(f)^0$ has index $\leq 2^k$ in $N_{\mcg}(f)^0$. 
\end{proof}
By Proposition \ref{prop-centralizer} and Theorem \ref{normalizer-pseudoA}, if we rename the subsurfaces $S_i$ as  necessary,   we have: 
\begin{prop}\label{prop-centralizer-vcy}
Let $S$ be an orientable closed  surface  with finitely many  punctures. Let $f\in \mcgm$ with   $\rhoso(f)=(Id_{\widehat{S}_{1}},...,Id_{\widehat{S}_{a}},f_{a+1},...,f_k)$ where $f_{a+1},...,f_k$ are pseudo-Anosov. Then 
\begin{align}
\xymatrix{
1\ar[r] & \z^r \ar[r] & C_{\mcg}(f)^0 \ar[r]^(0.4){\rhos}& \prod_{i=1}^a \Gamma(\widehat{S}_i,\mathcal{Q}_i) \prod_{j=a+1}^k V_j \ar[r] & 1 , }
\end{align}
where $V_j=C_{\Gamma(\widehat{S}_j,\mathcal{Q}_j)}(f_j)$ is virtually cyclic for each $j\in\{a+1,...,k\}$. 
\end{prop}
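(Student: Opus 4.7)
The plan is to derive this statement as a direct specialization of Proposition \ref{prop-centralizer}. Applying that proposition to the given $f$ with $\rhoso(f)=(f_1,\dots,f_k)$ produces the short exact sequence
$$1 \longrightarrow \z^r \longrightarrow C_{\mcg}(f)^0 \xrightarrow{\;\rhoso\;} \prod_{i=1}^k C_{\Gamma(\widehat{S}_i,\mathcal{Q}_i)}(f_i) \longrightarrow 1,$$
so, after the renaming of subsurfaces placing the identity components first and the pseudo-Anosov components last, the only task remaining is to identify each factor of the product on the right.

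For each $i \in \{1,\dots,a\}$ one has $f_i = \mathrm{Id}_{\widehat{S}_i}$, and its centralizer in $\Gamma(\widehat{S}_i,\mathcal{Q}_i)$ is the whole group $\Gamma(\widehat{S}_i,\mathcal{Q}_i)$; this produces the first block of factors on the right-hand side of the exact sequence as stated. For each $j \in \{a+1,\dots,k\}$, the component $f_j$ is pseudo-Anosov, and Theorem \ref{normalizer-pseudoA} asserts that $C_{\Gamma(\widehat{S}_j)}(f_j)$ is a finite extension of an infinite cyclic subgroup, hence virtually cyclic. Since $V_j = C_{\Gamma(\widehat{S}_j,\mathcal{Q}_j)}(f_j) = C_{\Gamma(\widehat{S}_j)}(f_j) \cap \Gamma(\widehat{S}_j,\mathcal{Q}_j)$ is a subgroup of a virtually cyclic group, it too is virtually cyclic; and since it contains $\langle f_j\rangle$ with $f_j$ of infinite order, it is in fact infinite virtually cyclic.

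Plugging these two identifications into the exact sequence produced by Proposition \ref{prop-centralizer} yields exactly the sequence in the statement. I do not anticipate any genuine obstacle: the claim is essentially a repackaging of Proposition \ref{prop-centralizer}, with the identity and pseudo-Anosov components separated, combined with McCarthy's classification of centralizers of pseudo-Anosov classes (Theorem \ref{normalizer-pseudoA}) and the elementary fact that a subgroup of a virtually cyclic group is virtually cyclic.
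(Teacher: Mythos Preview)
Your proposal is correct and matches the paper's approach exactly: the paper derives Proposition~\ref{prop-centralizer-vcy} directly from Proposition~\ref{prop-centralizer} and Theorem~\ref{normalizer-pseudoA} after renaming the subsurfaces, which is precisely what you do. The extra details you supply (identifying the centralizer of the identity with the whole group, and passing to the subgroup $\Gamma(\widehat{S}_j,\mathcal{Q}_j)$ for the pseudo-Anosov factors) are the natural unpacking of that one-line deduction.
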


\textsc{Surfaces with boundary}. Suppose that $S$ has  $b\neq 0$  boundary components $\beta_1,...,\beta_b$, let the corking homomorphism
$\theta_{S}\colon \mcg\to \Gamma(\widehat{S}),$
with kernel $\ker(\theta_{S})\simeq \z^b$, generated by Dehn twists about curves isotopic to boundary   components of $S$. Let $\mathcal{R}$ be the set of punctures of $\widehat{S}$ which comes from  the boundary components of $S$. Then $\theta_{S}(\mcg)=\Gamma(\widehat{S},\mathcal{R})$ which is the subgroup of $\Gamma(\widehat{S})$ that fixes pointwise the set $\mathcal{R}$. 

Let $f,g\in \mcg$, suppose that $\theta_S(f)$ and $\theta_S(g)$ commute, then $gfg^{-1}f^{-1}$ is in $\ker\theta_S$, but $gfg^{-1}f^{-1}$ has no Dehn twists about boundary components, so $gfg^{-1}f^{-1}=Id$. Therefore
\begin{align}\label{centraliz-boundary}
\xymatrix{ 1 \ar[r]&\z^b \ar[r]& C_{\mcg}(f) \ar[r]^(0.45){\theta_{S}}& C_{\Gamma(\widehat{S},\mathcal{R})}(\theta_{S}(f)) \ar[r]& 1. 
}
\end{align}
Observe that elements in $\mcg$ leave invariant a regular neighborhood of the boundary $\partial S$.  Then,  if $f$ has a non-zero power of a Dehn twist $T_{\beta_i}$, for any $g\in \mcg$,  $gfg^{-1}$ has the same power of $T_{\beta_i}$, then    
\begin{align}\label{norm-boundary-eq}
N_{\mcg}(f)=C_{\mcg}(f).
\end{align}
 Moreover,  if $f$ has no Dehn twist about curves $\beta_1,...,\beta_b$, then $$gfg^{-1}=f^{\pm 1} \;\;\;\text{iff}\;\;\; \theta_S(g)\theta_S(f)\theta_S(g)^{-1}= \theta_S(f)^{\pm 1}. $$
Then 
 \begin{align}\label{normaliz-boundary}
 \xymatrix{ 1 \ar[r]&\z^b \ar[r]& N_{\Gamma(S)}(f) \ar[r]^(0.45){\theta_{S}}& N_{\Gamma(\widehat{S},\mathcal{R})}(\theta_{S}(f)) \ar[r]& 1 
}.
\end{align}


\section{Geometric dimension for the family $\vc$ }\label{geom-dimens-mcg}
   
In Section  \ref{sec-geom-dim-mcg-mcgm} we prove that $\gdvc \mcg<\infty$, and in  
  Section \ref{sec-bounds-gdvc}, we will give bounds for $\gdvc \mcgm$ and $\gdvc \mcg$.  

Let $S$ be an orientable compact surface with finitely many punctures  and $\chi(S)<0$. 
It is well-known that the Teichm\"uller space $\mathcal{T}(S)$ is a finite dimensional space which is contractible, on which $\mcg$ acts properly and  it is a model for $\underline{E} \mcg$ by results of Kerckhoff given in \cite{Kerckhoff}.  On the other hand, J. Aramayona and C. Mart\'inez proved in \cite{aramayona} the following:  
\begin{thm}\cite[Cor. 1.3]{aramayona} \label{thm-aramayona}
Let $S$  be an orientable compact surface with finitely many punctures. Then there exist a cocompact model for  $\underline{E}\Gamma(S)$   of dimension equal to the virtual cohomological dimension $vcd(\Gamma(S))$. 
\end{thm}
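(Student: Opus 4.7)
The plan is to exhibit the desired model as an $\mcg$-equivariant spine inside Teichm\"uller space. By Kerckhoff's theorem $\mathcal{T}(S)$ is a contractible finite-dimensional space on which $\mcg$ acts properly with finite stabilizers, hence a model for $\efin$, but it is generally not cocompact and its real dimension exceeds $vcd(\mcg)$. The strategy is to produce an $\mcg$-invariant closed subcomplex $Y \subset \mathcal{T}(S)$, of dimension exactly $vcd(\mcg)$, onto which $\mathcal{T}(S)$ equivariantly deformation retracts, with $Y/\mcg$ compact.

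First I would handle the case of a surface with punctures but empty boundary. Here the natural construction uses the Epstein--Penner/Bowditch ideal cell decomposition of the decorated Teichm\"uller space, whose cells are indexed by ideal arc systems on $S$. These combinatorics are $\mcg$-equivariant, and collapsing out the positive-weight decoration factor yields an $\mcg$-equivariant cell complex $Y \subset \mathcal{T}(S)$ which is an equivariant deformation retract. Since only finitely many ideal arc systems exist modulo $\mcg$, the quotient $Y/\mcg$ is a finite CW complex, giving cocompactness. For $S$ with $b>0$ boundary components I would reduce to the closed-punctured case via the corking short exact sequence $1\to\z^b\to\mcg\to\Gamma(\widehat{S},\mathcal{R})\to 1$ from Section \ref{sec-homom-cota-k-r}: crossing the spine for $\Gamma(\widehat{S},\mathcal{R})$ with the $b$-torus $(\s^1)^b$ (a cocompact model for $\underline{E}\z^b$) yields a cocompact model for $\efin$, and since the extension is central the dimensions add up to $vcd(\mcg)$. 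For a closed surface without punctures one instead uses Harer's classical systole spine inside $\mathcal{T}(S)$.

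With the spine in hand, the remaining checks are routine except for the dimension count. Contractibility of $Y$ and the isotropy condition follow from $Y$ being an equivariant deformation retract of $\mathcal{T}(S)$, so $Y$ is a model for $\underline{E}\mcg$ by Theorem \ref{thm-esp-clas-defi}. The inequality $\dim Y \geq vcd(\mcg)$ is automatic: taking any torsion-free finite-index $\Gamma'\subseteq\mcg$, the quotient $Y/\Gamma'$ is a $K(\Gamma',1)$ whose dimension lies between $\mathrm{cd}(\Gamma')=vcd(\mcg)$ and $\dim Y$. The hard part is the reverse inequality $\dim Y \leq vcd(\mcg)$: this requires a careful combinatorial analysis of the top-dimensional strata of the ideal cell decomposition and must reproduce Harer's computation in \cite{harer}. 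This dimension-matching step is the main technical obstacle; the rest of the argument is bookkeeping assembling a cocompact $\mcg$-CW model from equivariant pieces.
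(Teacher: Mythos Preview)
The paper does not give its own proof of this theorem: it is quoted verbatim as \cite[Cor.~1.3]{aramayona} and used as a black box, so there is no in-paper argument to compare your proposal against.

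That said, your sketch has two issues worth flagging. First, a small slip: $(\s^1)^b$ is not a model for $\underline{E}\z^b$; the model is $\r^b$ with the translation action (whose quotient is the torus, which is what makes it cocompact). Second, and more seriously, the closed-surface case is a genuine gap. There is no ``Harer's classical systole spine'' of dimension $vcd$ for a closed surface; Harer's spine constructions in \cite{harer} require at least one puncture or boundary component to anchor the arc systems, and producing a cocompact $\mcg$-equivariant deformation retract of $\mathcal{T}(S)$ of minimal dimension for closed $S$ was not available by those methods. Aramayona and Mart\'inez-P\'erez do not build such a spine either: their argument passes through Bredon cohomology and a criterion of L\"uck relating $\gdf G$ to $vcd(G)$ via control over the lengths of chains of finite subgroups and the cohomology of their normalizers. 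Your spine strategy is in the right spirit for the punctured case (and indeed close to what Harer and Penner do there), but it does not extend to closed surfaces, and the actual proof in \cite{aramayona} proceeds along a different, more algebraic route.
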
 
And Harer computed $vcd(\Gamma(S))$  in \cite{harer}.    
\begin{thm} \cite[Thm. 4.1]{harer}\label{thm-vcd-harer}
Let $S$ be  an orientable surface surface with genus $g$, $b$ boundary components  and  $n$ punctures. If $2g+b+n>2$, then  
\begin{align*}
vcd(\Gamma(S))= 
        \left \{ \begin{array}{cc}  
             4g+2b+n-4 & \;\; \;\;\;\;\;\;\; 
             \text{if } g >0, \; b+n  >0, 
                  \\   4g-5     & \;\;\; \text{if } n,b=0, 
                  \\ 2b+  n-3     &  \text{if } g=0. 
              \end{array} 
         \right.
\end{align*}
\end{thm}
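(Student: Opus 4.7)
The plan is to establish matching upper and lower bounds for $vcd(\mcg)$, following Harer's original strategy in \cite{harer}. Since $vcd(\mcg) = cd(\mcgm)$ for $m\geq 3$ by Theorem \ref{thm-modm-pure} (torsion-freeness of the congruence subgroup), the problem reduces to computing the cohomological dimension of a torsion-free finite-index subgroup.

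For the upper bound, I would exhibit a contractible $\mcg$-CW-complex $X$ of the claimed dimension with finite cell stabilizers, giving $vcd(\mcg)\leq \dim X$ by the standard bound via Serre. The construction depends on the topological type of $S$. For a closed surface of genus $g\geq 2$, the plan is to produce Harer's deformation retract of Teichm\"uller space $\mathcal{T}(S)$ of dimension $4g-5$: stratify $\mathcal{T}(S)$ by the combinatorial type of the systolic curve system, then retract each stratum onto a spine whose maximal cells correspond to filling collections of geodesics realizing the systole. For surfaces with punctures or boundary, the approach is to use the arc complex $\mathcal{A}(S)$ of isotopy classes of essential simple arcs with endpoints at marked points or boundary components; removing the subcomplex of non-filling arc systems leaves a contractible $\mcg$-invariant subspace whose dimension matches each stated formula.

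For the lower bound, the plan is to invoke Harer's virtual duality theorem: $\mcgm$ is a duality group of dimension equal to the conjectured $vcd$. Concretely, one constructs an explicit nonzero class in $H^{d}(\mcgm;\z\mcgm)$ for $d$ equal to the right-hand side of the theorem, identifying the dualizing module via a Steinberg-type complex built from the curve or arc complex. A cruder elementary lower bound comes from exhibiting a free abelian subgroup of Dehn twists along a maximal disjoint system of curves (together with boundary twists), but for closed surfaces this yields only $3g-3$ rather than $4g-5$, so a cohomological input is essential to reach the sharp value.

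The main obstacle is proving contractibility of Harer's spine in the closed-surface case, and of the filling arc complex in the punctured or bordered case. These contractibility results require substantial geometric machinery: convexity properties of length functions on $\mathcal{T}(S)$, careful handling of degenerations to the boundary of moduli space, and delicate surgery arguments on arc systems to produce equivariant deformation retractions. Once contractibility is in hand, the dimension tally is a combinatorial exercise counting edges in a maximal filling curve or arc system on $S$, which directly reproduces the three case formulas $4g-5$, $4g+2b+n-4$, and $2b+n-3$.
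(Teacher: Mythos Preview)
Your sketch is a faithful outline of Harer's original argument, but note that the paper under review does not prove this theorem at all: it is simply quoted as \cite[Thm.~4.1]{harer} and used as a black box in the dimension estimates of Section~\ref{geom-dimens-mcg}. There is no ``paper's own proof'' to compare against; the authors invoke only the statement, specifically the consequence $vcd(\Gamma(\widehat{S}_i))\leq -2\chi(\widehat{S}_i)$ for punctured subsurfaces (see inequality~(\ref{eq-vcd-Si})). So while your plan---Harer's spine for the upper bound and the virtual duality theorem for the lower bound---is the correct route to the result itself, in the context of this paper no proof is expected or supplied.
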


\subsection{Geometric dimension for $\mcg$}\label{sec-geom-dim-mcg-mcgm}

 We will use the same notation of Section \ref{section-build}.
   We will prove that there exist finite dimensional models for $\underline{E} N_{\mcg}[C]$ and   $E_{\g[C]} N_{\mcg}[C]$ and a uniform bound on $\gd_{\g[C]}N_{\Gamma}[C]$ for any $[C]\in [\C_{\mcg}]$, with these results,  Theo\-rem \ref{thm-luck-weierm}  and the fact that $\gdf \mcg$ is finite, we have:
\begin{thm}\label{thm-gdf-mcg}
Let $S$ be an orientable compact surface with finitely many punctures  and $\chi(S)<0$. Then $\gdvc \mcg< \infty$, that is, the  mapping class group $\mcg$ admits a finite dimensional model for $\underline{\underline{E}} \mcg$. 
\end{thm}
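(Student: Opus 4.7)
The plan is to verify the hypotheses of the Lück--Weiermann construction (Theorem \ref{thm-luck-weierm}), which expresses a model for $\evc \mcg$ as a $\mcg$-pushout assembled from a model for $\underline{E}\mcg$ together with, for each representative $[C] \in [\C_{\mcg}]/\mcg$, models for $\underline{E} N_{\mcg}[C]$ and $E_{\g[C]} N_{\mcg}[C]$. To conclude $\gdvc \mcg < \infty$ it therefore suffices to produce finite-dimensional models in each of these three flavors, with the dimensions of the $N_{\mcg}[C]$-models uniformly bounded over the set of conjugacy classes $[C]$. Finiteness of $\gdf \mcg$ is immediate from Kerckhoff's and Aramayona--Martínez's results (Theorem \ref{thm-aramayona}): Teichmüller space $\mathcal{T}(S)$, and in fact the cocompact model of dimension $vcd(\Gamma(S))$, is a finite-dimensional model for $\underline{E}\mcg$.

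For the second ingredient I would argue by induction on the complexity $\xi(S) = -\chi(S)$. By Proposition \ref{intro-prop-comm}, every commensurator may be written as $N_{\mcg}[C] = N_{\mcg}(D)$ where $D = \langle g^n \rangle \subseteq \mcgm$ is maximal infinite cyclic. If $g^n$ is pseudo-Anosov, Theorem \ref{normalizer-pseudoA} identifies $N_{\mcg}(g^n)$ as virtually cyclic, so it admits a $0$-dimensional model. If $g^n$ is reducible, Proposition \ref{prop-centralizer-vcy} (together with the analog coming from \eqref{centraliz-boundary}--\eqref{normaliz-boundary} in the case $\partial S \neq \emptyset$) presents a finite-index subgroup of $N_{\mcg}(g^n)$ as an extension
\[
1 \longrightarrow \mathbb{Z}^{r} \longrightarrow N_{\mcg}(g^n)^0 \longrightarrow \prod_{i=1}^{a} \Gamma(\widehat{S_i}, \mathcal{Q}_i) \times \prod_{j=a+1}^{k} V_j \longrightarrow 1,
\]
with each $V_j$ virtually cyclic. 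By Remark \ref{rem-euler-k-r} each subsurface $\widehat{S_i}$ is strictly simpler than $S$, so the inductive hypothesis furnishes finite-dimensional models for $\underline{E}\Gamma(\widehat{S_i}, \mathcal{Q}_i)$; combined with \eqref{efin-product} for the product and the standard fact that a finite-dimensional model exists for any extension of groups admitting such (via the Borel construction applied to a quotient model), one obtains a finite-dimensional model for $\underline{E} N_{\mcg}[C]$. For the third ingredient, the family $\g[C]$ is generated modulo finite subgroups by virtually cyclic subgroups commensurate with $D$; using that $D$ contains a characteristic finite-index subgroup $D'$ which is central in $N_{\mcg}[C]$ (by condition (C) of Proposition \ref{cond-c-mod}), a model for $E_{\g[C]} N_{\mcg}[C]$ can be built as the pullback along $N_{\mcg}[C] \to N_{\mcg}[C]/D'$ of a model for $\underline{E}(N_{\mcg}[C]/D')$, whose existence in finite dimension again follows from the same inductive setup.

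The main obstacle is the uniform bound: the Lück--Weiermann pushout produces a model whose dimension is bounded by the max of the dimensions of the three pieces (up to an additive constant), and we need this maximum to be uniform over all infinitely many classes $[C]$. The key observation is that Remark \ref{rem-euler-k-r} bounds the number $r$ of reducing curves, the number $k$ of components, and the complexity of each $\widehat{S_i}$ purely in terms of $\xi(S)$, so only finitely many homeomorphism types of subsurface decompositions occur. Since the inductive construction of models for $\underline{E} N_{\mcg}[C]$ and $E_{\g[C]} N_{\mcg}[C]$ depends only on these finitely many data, the resulting dimensions are bounded uniformly in $[C]$. Putting this together with Theorem \ref{thm-luck-weierm} and the finiteness of $\gdf \mcg$ yields the theorem.
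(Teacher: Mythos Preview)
Your overall strategy matches the paper's: apply Theorem~\ref{thm-luck-weierm} and bound the three ingredients uniformly. However, there are two places where you overcomplicate or misstate things.

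First, for $\underline{E}N_{\mcg}[C]$ no induction or structure theory is needed: since $N_{\mcg}[C]\leq\mcg$, restricting any model for $\underline{E}\mcg$ to the subgroup already gives a model for $\underline{E}N_{\mcg}[C]$, so $\gdf N_{\mcg}[C]\leq\gdf\mcg\leq vcd(\mcg)$ uniformly by~\eqref{gd-subg} and Theorem~\ref{thm-aramayona}. This is exactly the paper's ``Part~1'', and it replaces your entire inductive paragraph with one line.

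Second, condition~(C) does \emph{not} produce a central subgroup: it only says that conjugation by $g\in N_{\mcg}[C]$ sends a generator of $D$ to its $\pm 1$ power, so $D$ is normal in $N_{\mcg}[C]$, not central. Fortunately normality is all you need, and in fact Proposition~\ref{prop-comm-mcgm} already gives $N_{\mcg}[C]=N_{\mcg}(D)$ with $D$ maximal cyclic in $\mcgm$; the pullback of $\underline{E}(N_{\mcg}[C]/D)$ is then a model for $E_{\g[C]}N_{\mcg}[C]$ (this is Remark~\ref{rem-w-ec}). The paper bounds $\gdf W_{\mcg}(C)$ directly from the extension in Proposition~\ref{prop-centralizer-vcy} via Theorems~\ref{seq-efin}, \ref{finite-index-gd}, \ref{thm-aramayona} and the Harer formula, obtaining an explicit bound depending only on $\chi(S)$ and~$n$. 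No induction on complexity is invoked, and indeed your chosen measure $\xi=-\chi$ does not strictly decrease when cutting along a single nonseparating curve, so the induction as you set it up would not close; the direct bound avoids this entirely.
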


For the  proof of Theorem \ref{thm-gdf-mcg} we need the following results. 
\begin{prop}\cite[Prop. 4]{Daniel-Leary} \label{prop-Daniel-Leary}
Let $G$ be an infinite virtually cyclic group, then there is model for  $\efin$ with finitely many orbits of cells which is homeomorphic to the real line.  
\end{prop}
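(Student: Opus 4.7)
The plan is to invoke the standard classification of infinite virtually cyclic groups and, in each of the two cases, exhibit an explicit action of $G$ on $\r$ whose underlying CW structure has finitely many orbits. Recall that any infinite virtually cyclic group $G$ is either of \emph{orientable type}, meaning there is an extension $1\to F \to G \to \z \to 1$ with $F$ finite, or of \emph{dihedral type}, meaning $G$ splits as an amalgamated product $G_1*_F G_2$ with $F$ finite and $[G_i:F]=2$ for $i=1,2$.

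In the orientable case, I would let $G$ act on $\r$ via the composition $G\twoheadrightarrow \z \hookrightarrow \mathrm{Isom}(\r)$, where $\z$ acts by translation. Equip $\r$ with the $G$-CW structure whose $0$-cells are the integers and whose $1$-cells are the intervals $[n,n+1]$; this yields exactly two orbits of cells. Since $\z$ is torsion-free, every finite subgroup $H\leq G$ lies in $F$ and therefore acts trivially, so $\r^H=\r$ is contractible. Conversely, any infinite subgroup contains an element mapping to a nonzero integer, which acts by nontrivial translation and so fixes no point; hence $\r^H=\emptyset$ for infinite $H$. By Theorem \ref{thm-esp-clas-defi}, $\r$ is a model for $\efin$.

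In the dihedral case, I would take $T$ to be the Bass--Serre tree associated to the splitting $G_1*_F G_2$. Because $[G_i:F]=2$, every vertex of $T$ has valence $2$, so $T$ is homeomorphic to $\r$. The action has two orbits of vertices (with stabilizers conjugate to $G_1$ and $G_2$) and a single orbit of edges (with stabilizer conjugate to $F$); all stabilizers are finite, and there are finitely many orbits of cells. By Bass--Serre theory, every finite subgroup of $G$ is subconjugate to $G_1$ or $G_2$, hence fixes a vertex, and its fixed set is a nonempty subtree of $T$, thus contractible; an infinite subgroup cannot fix a point of $T$, since every stabilizer is finite. Applying Theorem \ref{thm-esp-clas-defi} again, $T\cong \r$ is a model for $\efin$.

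The only real decision point is invoking the dichotomy for infinite virtually cyclic groups; once the correct geometric object is chosen (translation action on $\r$ versus the Bass--Serre tree), the verification of fixed sets and the count of cell orbits is immediate, so I do not anticipate a genuine obstacle.
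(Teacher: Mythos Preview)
The paper does not supply its own proof of this proposition; it is quoted verbatim as \cite[Prop.~4]{Daniel-Leary} and used as a black box. Your argument is correct and is precisely the standard one given in that reference: split into the finite-by-$\z$ and infinite-dihedral cases, and in each case exhibit an action on $\r$ (by translations, respectively via the Bass--Serre tree of the amalgam) with finite stabilizers and finitely many cell orbits, then verify the fixed-set criterion of Theorem~\ref{thm-esp-clas-defi}. There is nothing to compare against in the present paper, and no gap in what you wrote.
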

\begin{thm}\cite[Thm. 5.16]{luck}\label{seq-efin}
Let $1 \to H \to G \to K\to 1$ be an exact sequence of groups.  Suppose that $H$ has the property that for any group $\tilde{H}$ which contains $H$ as subgroup of finite index, $\gdf \tilde{H}\leq n$. If $\gdf K\leq k$, then $\gdf G\leq n+k$.   
\end{thm}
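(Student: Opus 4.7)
The plan is to build a model for $E_{\fin_G}G$ by ``fibering'' a $k$-dimensional model for $E_{\fin_K}K$ with $n$-dimensional models for $\underline{E}L$ over each $G$-stabilizer $L$, so that the total dimension becomes $k+n$. Let $Y$ be a $k$-dimensional model for $E_{\fin_K}K$, which exists since $\gdf K\leq k$, and view $Y$ as a $G$-CW-complex via the projection $\pi\colon G\to K$. For an open cell $\sigma\subset Y$ with $K$-stabilizer $F_\sigma$ (finite), the $G$-stabilizer is $L_\sigma:=\pi^{-1}(F_\sigma)$, and the short exact sequence $1\to H\to L_\sigma\to F_\sigma\to 1$ shows $H\subseteq L_\sigma$ with finite index. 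The hypothesis on $H$ then supplies a model $Z_\sigma$ for $E_{\fin_{L_\sigma}}L_\sigma$ of dimension $\leq n$.

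Next, construct $X$ inductively over the skeleta of $Y$. Assume $X_{d-1}$ is a $G$-CW-complex of dimension $\leq (d-1)+n$ equipped with a $G$-map $p_{d-1}\colon X_{d-1}\to Y^{(d-1)}$ whose preimage over each equivariant cell $G\times_{L_\tau}e_\tau^{d-1}$ is $G$-homeomorphic to $G\times_{L_\tau}(Z_\tau\times e_\tau^{d-1})$. For each orbit of $d$-cells with representative $\sigma=G\times_{L_\sigma}D^d$ of $Y$, observe that whenever $\tau\subset\overline\sigma$ is a face one has $L_\sigma\subseteq L_\tau$, and the restriction $Z_\tau|_{L_\sigma}$ is an $L_\sigma$-CW-complex whose isotropy groups are still finite and whose fixed-point sets for finite subgroups of $L_\sigma$ are still contractible; thus $Z_\tau|_{L_\sigma}$ is itself a model for $E_{\fin_{L_\sigma}}L_\sigma$. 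The universal property of classifying spaces then yields an $L_\sigma$-equivariant map $Z_\sigma\to Z_\tau|_{L_\sigma}$, unique up to $L_\sigma$-homotopy. Choosing such maps compatibly across the face poset of $\sigma$, I then form the $G$-pushout
\begin{equation*}
\xymatrix@C=1.5em@R=1.5em{
G\times_{L_\sigma}\bigl(Z_\sigma\times\partial D^d\bigr) \ar[r] \ar[d] & X_{d-1} \ar[d] \\
G\times_{L_\sigma}\bigl(Z_\sigma\times D^d\bigr) \ar[r] & X_d,
}
\end{equation*}
where the top horizontal map combines the attaching map of $\sigma$ with the chosen maps $Z_\sigma\to Z_\tau|_{L_\sigma}$. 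The resulting $X=\bigcup_d X_d$ is a $G$-CW-complex of dimension $\leq k+n$ whose isotropy groups, being finite subgroups of the various $L_\sigma$'s acting on the $Z_\sigma$'s, are all finite.

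Finally, I would verify $X$ is a model for $E_{\fin_G}G$ via Theorem~\ref{thm-esp-clas-defi}. Emptiness of $X^F$ for infinite $F\subseteq G$ is automatic since all isotropies in $X$ are finite. For finite $F\subseteq G$, the map $X\to Y$ sends $X^F$ into $Y^{\pi(F)}$, and over a cell $\sigma\subset Y^{\pi(F)}$ the fiber of $X^F\to Y^{\pi(F)}$ is $L_\sigma$-conjugate to $Z_\sigma^F$, which is contractible because $Z_\sigma$ is a model for $\underline{E}L_\sigma$ and (a conjugate of) $F$ lies in $L_\sigma$. Since $Y^{\pi(F)}$ is contractible (as $Y$ is a model for $\underline{E}K$ and $\pi(F)$ is finite) and the fibers over its cells are contractible, a gluing-lemma/Mayer--Vietoris induction on the cells of $Y^{\pi(F)}$ shows that $X^F$ is contractible, as required.

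The main obstacle is the coherent choice of the $L_\sigma$-equivariant maps $Z_\sigma\to Z_\tau|_{L_\sigma}$ across the face poset: the universal property supplies them only up to equivariant homotopy, so strictly compatible attaching maps must be produced by an inductive rectification. This is standard obstruction theory (carried out explicitly in L\"uck's book via a bar-construction or homotopy-colimit formalism) but technically the most delicate ingredient. Once it is in place, the dimension count $\dim X\leq k+n$ is immediate from the cell-by-cell construction.
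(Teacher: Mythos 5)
This theorem is not proved in the paper at all: it is quoted verbatim from L\"uck's survey \cite{luck}, so there is no internal proof to compare against. Your argument is essentially the standard proof of that cited result --- pull the $k$-dimensional model for $E_{\fin_K}K$ back along $\pi$, note that each cell stabilizer $L_\sigma=\pi^{-1}(F_\sigma)$ contains $H$ with finite index and hence admits an $n$-dimensional model $Z_\sigma$ for $\underline{E}L_\sigma$, and thicken each equivariant cell $G\times_{L_\sigma}D^d$ to $G\times_{L_\sigma}(Z_\sigma\times D^d)$ by induction over skeleta --- and it is correct, with the one genuinely delicate point (rectifying the up-to-homotopy maps $Z_\sigma\to Z_\tau|_{L_\sigma}$ into strictly compatible attaching maps, equivalently building the $L_\sigma$-map $Z_\sigma\times\partial D^d\to X_{d-1}$ by equivariant obstruction theory) being exactly the point you explicitly flag and correctly attribute to standard technology.
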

In  \cite[Ex. 5.26]{luck} L\"uck shows that virtually poly-cyclic groups satisfies the condition about $H$ in Theorem 
\ref{seq-efin}, in particular $\z^n$ satisfies  such condition. 
\begin{thm}\cite[Thm. 2.4]{luck-type}\label{finite-index-gd}
Suppose  $H\subseteq G$ is a subgroup of finite index $n$, then $\gdf G \leq  \gdf H \cdot n\;$ and $\;\gdvc G\leq  \gdvc H \cdot n $. 
 \end{thm}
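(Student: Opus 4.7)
The plan is to prove both inequalities uniformly via a coinduction construction, which produces a $G$-CW-model for $E_{\f}G$ out of an $H$-CW-model for $E_{\f\cap H}H$, for either family $\f\in\{\fin,\vc\}$. Let $X$ be a model for $E_{\f\cap H}H$ of dimension $d=\gd_{\f\cap H}H$, and choose coset representatives $g_1,\ldots,g_n$ for $G/H$. Set
$$ Y \;=\; \mathrm{map}^H(G,X)\;=\;\{\phi\colon G\to X\mid \phi(gh)=h^{-1}\phi(g)\text{ for all }g\in G,\,h\in H\},$$
equipped with the $G$-action $(g_0\cdot\phi)(g)=\phi(g_0^{-1}g)$. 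Evaluation at $g_1,\ldots,g_n$ gives a $G$-equivariant homeomorphism $Y\cong X^n$, under which the $G$-action permutes factors via the permutation representation of $G$ on $G/H$ and twists each factor by an appropriate element of $H$. The product CW-structure on $X^n$ then endows $Y$ with a $G$-CW-structure of dimension exactly $n\cdot d$.

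The remaining task is to check the fixed-point property characterizing $E_{\f}G$. A direct computation using the $H$-equivariance of $\phi$ shows that for any subgroup $F\leq G$,
$$ Y^F \;=\; \prod_{FgH\in F\backslash G/H} X^{\,g^{-1}Fg\,\cap\, H}. $$
Since $g^{-1}Fg\cap H$ is the stabilizer of the coset $H\in G/H$ under the action of $g^{-1}Fg$, its index in $g^{-1}Fg$ is at most $n$, so $g^{-1}Fg\cap H$ is of finite index in the conjugate $g^{-1}Fg$ of $F$. Both $\fin$ and $\vc$ are closed under subgroups and under finite-index overgroups, so $F\in\f_G$ if and only if $g^{-1}Fg\cap H\in\f\cap H$ for every $g\in G$. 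When this holds, each factor $X^{g^{-1}Fg\cap H}$ is contractible, hence $Y^F$ is a finite product of contractible spaces and therefore contractible; otherwise some factor is empty, and so is $Y^F$. By Theorem~\ref{thm-esp-clas-defi}, $Y$ is a model for $E_{\f}G$ of dimension $nd$, yielding $\gd_{\f}G\leq n\cdot\gd_{\f\cap H}H$ for both $\f=\fin$ and $\f=\vc$.

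The main technical hurdle is confirming that $Y$ is honestly a $G$-CW-complex whose dimension equals $nd$. This amounts to verifying that the $G$-action on $X^n$ described above is cellular with respect to the product CW-structure, which follows immediately from the explicit formula: a group element sends a product cell to another product cell by permuting coordinates and acting on each factor by an element of $H$, which is cellular on $X$. The remaining ingredients---the fixed-point formula for coinductions and the closure of $\fin$ and $\vc$ under subgroups and finite-index extensions---are routine but essential, since they furnish exactly the matching between membership $F\in\f_G$ and non-emptiness of $Y^F$ required to identify $Y$ with $E_{\f}G$.
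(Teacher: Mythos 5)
The paper offers no proof of this statement---it is quoted verbatim from L\"uck \cite[Thm.~2.4]{luck-type}---and your coinduction argument is essentially the standard (indeed L\"uck's) proof, so there is no difference of method to report. The fixed-point formula for $\mathrm{map}^H(G,X)$, the equivalence $F\in\f_G \Leftrightarrow g^{-1}Fg\cap H\in\f\cap H$ for all $g$ (using that both $\fin$ and $\vc$ are closed under subgroups, conjugation, and finite-index overgroups), and the appeal to Theorem~\ref{thm-esp-clas-defi} are all correct, and they do yield both inequalities uniformly.

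The one genuine weak point is precisely the step you flag as the ``main technical hurdle'' and then dismiss: cellularity of the $G$-action on $X^n$ is \emph{not} sufficient for the product cell structure to be a $G$-CW structure. A $G$-CW complex also requires that any $g$ carrying an open cell onto itself fix that cell pointwise. For a diagonal action this is inherited from the factors, but your action permutes the $n$ factors: if $g$ induces a nontrivial permutation $\tau$ of the cosets and stabilizes the product cell $e_1\times\cdots\times e_n$, it acts on it by $(x_1,\ldots,x_n)\mapsto(h_1x_{\tau^{-1}(1)},\ldots,h_nx_{\tau^{-1}(n)})$, which is setwise but not pointwise the identity. So $Y$ with the product cells is not literally a $G$-CW complex, and ``the action is cellular'' does not close the gap. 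The standard repair is an equivariant subdivision: take $X$ to be a $d$-dimensional simplicial $H$-complex, use the natural triangulation of a product of simplices (which has dimension equal to the sum of the dimensions and refines the product cells equivariantly), and pass to a barycentric subdivision so that setwise-stabilized simplices are fixed pointwise. None of this raises the dimension, so the conclusion $\gd_{\f}G\leq n\cdot\gd_{\f\cap H}H$ stands, but this step must be argued rather than asserted.
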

\begin{prop}\cite[Lem. 4.3]{Guido} \label{prop-guido}
Let $1\to \z^n \to G \to F$ be an exact sequence of groups with $F$ finite. Then $G$ admits an $n$-dimensional cocompact $\efin$ homeomorphic to $\r^n$, with $G$ acting by affine maps. 
\end{prop}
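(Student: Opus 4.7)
The plan is to embed $G$ into the affine group $\mathrm{Aff}(\r^n)=\r^n\rtimes GL_n(\r)$ so that the standard action on $\r^n$ restricts to an affine action of $G$; the resulting space will be our candidate model, and the defining properties from Theorem~\ref{thm-esp-clas-defi} can be verified directly.

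Since $\z^n$ is normal and abelian in $G$, conjugation in $G$ descends to an action of $F$ on $\z^n$, giving a representation $\phi\colon F\to GL_n(\z)\subseteq GL_n(\r)$. The given extension corresponds to a class $\alpha\in H^2(F;\z^n)$; pushing forward along the inclusion $\z^n\hookrightarrow \r^n$ yields a class $\bar{\alpha}\in H^2(F;\r^n)$. The main obstacle is that the original extension need not split, so one cannot directly realise $G$ as a semidirect product. The key observation that overcomes this is the vanishing $H^2(F;\r^n)=0$, which holds because $F$ is finite and $\r^n$ is uniquely $|F|$-divisible. Consequently the pushed-out extension $1\to \r^n\to \tilde{G}\to F\to 1$ splits, so $\tilde{G}\cong \r^n\rtimes_\phi F\subseteq \mathrm{Aff}(\r^n)$. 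The canonical homomorphism $G\to \tilde{G}$ restricts to the inclusion $\z^n\hookrightarrow \r^n$ on kernels and to the identity on $F$, so by the five lemma it is injective, producing the desired affine action in which $\z^n$ acts by translations.

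It remains to verify the model-theoretic conditions. For any $x\in \r^n$, the stabilizer $\mathrm{Stab}_G(x)$ meets $\z^n$ trivially because a nonzero translation has no fixed point, so it embeds into $F$ and is finite. For any finite subgroup $H\subseteq G$, torsion-freeness of $\z^n$ forces $H\cap \z^n=1$ and $H\hookrightarrow F$; averaging an arbitrary $p\in\r^n$ over the $H$-orbit produces an $H$-fixed point, and $\mathrm{Fix}^H(\r^n)$, being the intersection of the affine fixed subspaces of the finitely many elements of $H$, is a nonempty affine subspace and hence contractible. Cocompactness is automatic: $\r^n/\z^n$ is an $n$-torus, whose further quotient by the finite group $F$ is compact. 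The only subtle step is the cohomology vanishing that promotes the possibly nonsplit $\z^n$-extension to a split $\r^n$-extension; the remaining verifications are a routine application of Theorem~\ref{thm-esp-clas-defi}.
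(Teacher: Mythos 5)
The paper does not prove this statement at all---it is quoted from Mislin \cite{Guido}---and your argument is precisely the standard proof of that lemma: push the extension out along $\z^n\hookrightarrow\r^n$, use $H^2(F;\r^n)=0$ (unique $|F|$-divisibility) to split the pushed-out extension inside $\mathrm{Aff}(\r^n)$, embed $G$ via the five lemma, and verify properness, cocompactness, and contractibility of fixed sets of finite subgroups directly. The only point you leave implicit is that this proper, cocompact affine action on $\r^n$ carries a $G$-CW structure (equivariant triangulation), which is standard; everything else is correct.
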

\begin{rem}\label{rem-w-ec}
Let $[C]\in [\C_{\mcg}]$. From Proposition \ref{prop-comm-mcgm}, we could assume that $C$ is a cyclic  maximal subgroup  in $\C_{\mcgm}$ and 
$N_{\mcg}[C]=N_{\mcg}(C)$.  
Let $W_{\mcg}({C})=N_{\mcg}({C})/{C}$ and $p\colon N_{\mcg}({C})\to W_{\mcg}({C})$,  the projection.\\
From Theorem \ref{thm-esp-clas-defi}, a model for $\underline{E} W_{\mcg}({C})$ with the $N_{\mcg}[C]$-action induced from the projection $p$ is a model for $E_{\g[C]} N_{\mcg}[C]$.  \\
 Then it is sufficient to consider  models for  $\underline{E} N_{\mcg}(C)$ and   $\underline{E}W_{\mcg}(C)$, of maximal infinite cyclic subgroups $C$ in  $\C_{\mcgm}$.  
 \end{rem} 

 %

\textit{Proof of Theorem} \ref{thm-gdf-mcg}: Suppose that the surface $S$ has genus $g$, $b$ boundary components and $n$ punctures. \\
\textbf{Part 1.} \textit{For any $[C]\in [\C_{\mcg}]$, $\gdf N_{\mcg}[C]$ is finite.}
We may assume that $C\in \C_{\mcgm}$, it follows that $N_{\mcg}[C]=N_{\mcg}({C})$. 
 Since $N_{\mcg}({C})\leq \mcg$ from the properties given in (\ref{gd-subg}) and Theorem \ref{thm-aramayona}, we conclude that $\gdf N_{\mcg}[C]\leq \gdf \mcg\leq vcd \mcg$, which is finite. \\ 
\textbf{Part 2.}
  \textit{ We will prove that there exist $z\in \z$, such that for any $[C]\in [\C_{\mcg}]$,  $\gd_{\g[C]} N_{\mcg}[C]\leq z.$}\\
(I) \emph{The surface $S$ has empty boundary. } Let $[C]\in \C_{\mcg}$,  with $C=\langle f\rangle$.  By the Nielsen-Thurston classification Theorem, $f$ is  either,  a pseudo-Anosov class or  a reducible element.\\
(a) If $f$ is pseudo-Anosov, then $N_{\mcg}[C]=N_{\mcg}(f)$ is virtually cyclic and  $\g [C]$ is the family of all subgroups of $N_{\mcg}(f)$, hence a point is a model for $E_{\g [C]}N_{\mcg}[C]$, therefore $\gd_{\g[C]}N_{\mcg}[C]=0$.   \\
(b) If $f$ is reducible and   $f=\Pi_{i=1}^r T_{\alpha_i}^{n_i}$, with $n_i\in \z-\{0\}$, where $\alpha_1,...,\alpha_r$ are the vertices of $\sigma=\sigma(f)$. By Corollary \ref{cor-comm-dehn}
,   $N_{\mcg}[C]=N_{\mcg}(f)$ and we can suppose that $gdc\{n_1,...,n_k\}=1$. \\
Following the same idea as in Remark \ref{rem-w-ec},  a model for $\underline{E} W_{\mcg}(f)$ with the induced action of the projection $N_{\mcg}(f)\to W_{\mcg}(f)$ is a model for $E_{\g[C]}N_{\mcg}[C]$. We will prove that $\gdf W_{\mcg}(f)$ is finite. \\
 Note that any element  $g\in \mcg_{\sigma}^0$ commutes with $f$, because  $g$ fixes each class $\alpha_i$, then $\mcg_{\sigma}^0\subseteq N_{\mcg}(f)$. On the other hand, we have    $N_{\mcg}(f)\subseteq \mcg_{\sigma}$, therefore $\mcg_{\sigma}^0$ is a finite index subgroup of $N_{\mcg}(f)$  and it is normal, then 
\begin{align*}
\xymatrix{
1\ar[r] & \mcg_{\sigma}^0 \ar[r] & N_{\mcg}(f)\ar[r]& B  \ar[r] & 1 , }
\end{align*}
where $B$ is finite, we will obtain a uniform bound for the order of $B$. Since $f\in \mcg_{\sigma}^0 $, we have 
\begin{align*}
\xymatrix{
1\ar[r] & \mcg_{\sigma}^0/\langle f\rangle \ar[r] & N_{\mcg}(f)/\langle f\rangle \ar[r]& B  \ar[r] & 1 . }
\end{align*}
Since the index $[\mcg_{\sigma}:\mcg_{\sigma}^0]\leq (2r)!$, then $|B|\leq (2r!)$.  By Theorem \ref{finite-index-gd}, 
\begin{align}
\gdf W_{\mcg}(f)& \leq \gdf(\mcg_{\sigma}^0/\langle f\rangle)\cdot |B| \nonumber
\\ & \leq  \gdf(\mcg_{\sigma}^0/\langle f\rangle) \cdot (2r)!. \label{ineq-w-t}
\end{align}  
From (\ref{cutting-homom-0}), since $f\in \ker(\rhoso)$ we have
\begin{align*}
\xymatrix{
1\ar[r] & \langle T_{\alpha_1},....,T_{\alpha_r}\rangle /\langle f\rangle \ar[r] & \mcg_{\sigma}^0/\langle f\rangle \ar[r]& \prod_{i=1}^k \Gamma(\widehat{S}_i,\mathcal{Q}_i)  \ar[r] & 1 , }
\end{align*}
since $\langle T_{\alpha_1},....,T_{\alpha_r}\rangle\simeq \z^r$  and $f$ is identified with  the point $(n_1,...,n_r)\in \z^r$ via that isomorphism, then the quotient $\z^r/\langle(n_1,...,n_r)\rangle\simeq \z^{r-1}$, because $gdc\{n_1,...,n_r\}=1$.  Then    
\begin{align*}
\xymatrix{
1\ar[r] & \z^{r-1} \ar[r] & \mcg_{\sigma}^0/\langle f\rangle \ar[r]& \prod_{i=1}^a \Gamma(\widehat{S}_i,\mathcal{Q}_i) \ar[r] & 1 , }
\end{align*}
we apply Theorem \ref{seq-efin}, the properties given in   (\ref{efin-product}) and (\ref{gd-subg}), and Theorem \ref{thm-aramayona}
 to conclude  that 
\begin{align}
\gdf (\mcg_{\sigma}^0/\langle f\rangle) &\leq (r-1)+ \gdf(\Pi_{i=1}^a \Gamma(\widehat{S}_i,\mathcal{Q}_i) )  \nonumber \\
& \leq (r-1)+ \sum_{i=1}^a \gdf(\Gamma(\widehat{S}_i,\mathcal{Q}_i)) \nonumber \\
& \leq (r-1)+ \sum_{i=1}^a vcd(\Gamma(\widehat{S}_i)). \label{eq-vcd-Si}
\end{align}
Observe that each $\widehat{S}_i$ has at least one puncture, no boundary components and  negative Euler characteristic, thus from Theorem \ref{thm-vcd-harer} we can see  that  $vcd(\Gamma(\widehat{S}_i))\leq -2\chi(\widehat{S}_i)$, then  
\begin{align}
\gdf (\mcg_{\sigma}^0/\langle f\rangle) &\leq  (r-1) + \sum_{i=1}^a (-2\chi(\widehat{S}_i)) \nonumber
\\
& \leq  (r-1)+ (-2 \chi(S)),\label{ineq-wt2}
\end{align}
from inequalities (\ref{ineq-w-t}) and (\ref{ineq-wt2}), we conclude 
\begin{align}
\gdf W_{\mcg}(f)& \leq  (-2 \chi(S)+ r-1)\cdot (2r)!. 
\end{align}
By Remark \ref{rem-euler-k-r}
$r\leq \frac{-3\chi(S)-n}{2}$, therefore 
\begin{align}\label{ineq-partb}
\gdf W_{\mcg}(f)& \leq  (-5\chi(S)-n) (-3\chi(S)-n)!. 
\end{align}
Note that the bound only depends on the surface.
 \\
(c) Now suppose that $f$ is reducible,  $\sigma=\sigma(f)$ has vertices $\alpha_1,...,\alpha_r$,  and $\rho(f)$ is not trivial.  Since $N_{\mcg}[C]=N_{\mcg}(f^n)$, for some $n\neq0$ such that $f^n\in \mcgm$,   then  we may assume that $f\in \mcgm$ and $C=\langle f\rangle $ is maximal in $\C_{\mcgm}$. We  will apply Remark  \ref{rem-w-ec} again.   \\
Note that $C_{\mcg}(f)^0 \unlhd N_{\mcg}(f)$ is of finite index,  then
\begin{align*}
\xymatrix{
1\ar[r] & C_{\mcg}(f)^0 \ar[r] & N_{\mcg} (f)   \ar[r] & F
\ar[r] & 1 , }
\end{align*}
with $|F|\leq 2^k((2r)!)$, this follows by Proposition \ref{prop-centralizer} and because the index  $[\mcg_{\sigma}:\mcg_{\sigma}^0]\leq (2r)!$.  Since $f\in \mcgm$, then $f\in C_{\mcg}(f)^0$, and so, 
\begin{align*}
\xymatrix{
1\ar[r] & C_{\mcg}(f)^0/\langle f\rangle \ar[r] & N_{\mcg} (f)/\langle f\rangle   \ar[r] & F
\ar[r] & 1 . }
\end{align*}
By Theorem \ref{finite-index-gd} 
\begin{align}\label{ineq-red}
\gdf (W_{\mcg}(C))\leq \gdf (C_{\mcg}(f)^0/\langle f\rangle) \cdot 2^k((2r)!). 
\end{align}
Let $\rhoso(f)=(f_1,...,f_k)$ and  we  rename the  $\widehat{S}_i$ such that $f_i=Id_{\Gamma(\widehat{S_i})} $ for $i\in \{1,...,a\}$ and $f_j\in \Gamma(\widehat{S_j},\mathcal{Q}_j)$ is pseudo-Anosov for $j\in \{a+1,..,k\}$, then by Proposition \ref{prop-centralizer-vcy},   
\begin{align*}
\xymatrix{
1\ar[r] & \langle T_{\alpha_1},....,T_{\alpha_r}\rangle \ar[r] & C_{\mcg}(f)^0 \ar[r]^(0.35){\rhoso}& \prod_{i=1}^a \Gamma(\widehat{S}_i,\mathcal{Q}_i) \prod_{j=a+1}^k V_j  \ar[r] & 1 , }
\end{align*}
where $V_j=C_{\Gamma(\widehat{S_j},\mathcal{Q}_j)}(f_j)$ is  virtually cyclic for each $j$. \\
 Denote  the group $\prod_{i=1}^a \Gamma(\widehat{S}_i,\mathcal{Q}_i) \prod_{j=a+1}^k V_j $ by $\Delta$, then we have the following homomorphism
\begin{align*}
\psi \colon C_{\mcg}(f)^0 /\langle f \rangle  &\to \Delta/\langle \rhoso(f)\rangle 
\\     g\langle f\rangle &\mapsto \rhoso(g)\langle \rhoso(f)\rangle,
\end{align*}
which is well-defined because $\rhoso(\langle f\rangle)= \langle \rhoso(f)\rangle$,  $\psi$ is a homomorphism and  since $\rhoso$  is onto, then $\psi$ is onto too. Thus 
$$
\ker\psi= \rhoso^{-1}(\langle \rhoso(f)\rangle )/\langle f\rangle,
$$
is a free abelian subgroup isomorphic to $\z^r$. Moreover,
\begin{align*}
\frac{\Delta}{\langle \rhoso(f)\rangle }&=\frac{\Pi_{i=1}^a \Gamma(\widehat{S}_i,\mathcal{Q}_i)\Pi_{j=a+1}^k V_j } {\langle(Id_{\Gamma(\widehat{S_1})},...,Id_{\Gamma(\widehat{S_a})}, f_{a+1},...,f_{k} )\rangle } 
\\
&= \Pi_{i=1}^a \Gamma(\widehat{S}_i,\mathcal{Q}_i) \times \frac{\Pi_{j=a+1}^k V_j}{\langle(f_{a+1},...,f_{k} )\rangle}.
\end{align*}
Since $\langle f_j\rangle\unlhd V_j$ is of finite index,  $\text{ for all } j$, if $\check{f}=(f_{a+1},...,f_k)$,  
\begin{align}\label{vj}
\xymatrix{
1\ar[r] & \Pi_{j=a+1}^k \langle f_j\rangle/\langle \check{f}\rangle  \ar[r] & \Pi_{i=a+1}^k  V_j/ \langle \check{f}\rangle  \ar[r]& \Pi_{i=a+1}^k F_j \ar[r] & 1 , }
\end{align} 
with $F_j$ finite $\text{ for all } j$, thus from Proposition \ref{prop-guido} and (\ref{vj}) we have that   $\gdf (\Pi_{i=a+1}^k  V_j/ \langle \check{f}\rangle) \leq k-a-1$. Thus applying Theorem \ref{seq-efin}  to the exact sequence given by $\psi$,  the properties given in   (\ref{efin-product}) and (\ref{gd-subg}), and Theorem \ref{thm-aramayona}, we have: 
\begin{align}
\gdf (C_{\mcg}^0/\langle f\rangle)& \leq r+ [\gdf (\Pi_{i=1}^a\Gamma(\widehat{S}_i,\mathcal{Q}_i)) + (k-a-1)] \nonumber \\ 
&\leq r+ [\gdf (\Pi_{i=1}^a\Gamma(\widehat{S}_i)) + (k-a-1)]\nonumber  \\
&\leq r+ [\sum_{i=1}^a \gdf (\Gamma(\widehat{S}_i)) + (k-a-1)]\nonumber \\
& \leq r+ [\sum_{i=1}^a vcd (\Gamma(\widehat{S}_i)) + (k-a-1)] \nonumber \\
& \leq r+ [\sum_{i=1}^a (-2\chi(\widehat{S}_i)) + (k-a-1)] \nonumber \\
&\leq r+ [(-2\chi(S)) + (k-a-1)] \nonumber \\
&\leq \frac{-3\chi(S)-n}{2}+ (-3\chi(S)-1), \label{eq-c0f}
\end{align}
the last inequality (\ref{eq-c0f}) holds because $k\leq -\chi(S)$ (see Remark \ref{rem-euler-k-r}) and $a\geq 0$. From (\ref{ineq-red}) and (\ref{eq-c0f}) we conclude that
\begin{align}\label{ineq-partc}
\gdf (W_{\mcg}(C))\leq (-6\chi(S)-n)  ((-3\chi(S)-n)!). 
\end{align}
Note that the bound only depends on the surface. 
From (a) and the inequalities (\ref{ineq-partc}) 
and (\ref{ineq-partb}), for any $[C]\in [\C_{\mcg}]$,  
\begin{align}
\gd_{\g[C]} N_{\mcg}[C]\leq (-6\chi(S)-n) ((-3\chi(S)-n)!).
\end{align} \\
(II) \emph{The surface $S$ has non-empty boundary. } Following in a similar way as in Part (I), from  (\ref{centraliz-boundary}),(\ref{norm-boundary-eq})  and (\ref{normaliz-boundary}), we   conclude that there exist $z\in \z$, such that for any $[C]\in [\C_{\mcg}]$,  $\gd_{\g[C]} N_{\mcg}[C]\leq z$.   
\qed
\\

\subsection{Bounds for geometric dimension} \label{sec-bounds-gdvc}

 We will prove that $\mcgm$ satisfies the following property, which we will use to give a bound for $\gdvc \mcgm$. 
  \begin{defi}
A group $G$ satisfies $Max_{\vci_G}$ if every subgroup $H\in \vci_G$ is contained in a unique $H_{max}\in \vci_G$ which is maximal in $\vci_G$. \end{defi}

We  follow the same notation as in  Section \ref{sec-prelim} and
\ref{sec-stabilizer}. The homomorphism $\rhos$  is given in Section \ref{section-stabilizers},  and $\rhosm=\rhos|_{\mcgm}$.
By  \cite[Thm. 1.2]{ivanov}, elements in $\mcgm_{\sigma}$ do not rearrange the components of $S_{\sigma}$ and fix each curve of $\sigma$, 
 then $\mcgm_{\sigma}\subseteq \mcg_{\sigma}^0$.  Note that if $g\in \mcgm_{\sigma}$  and 
  $\rhosm(g)=(g_1,...,g_k)$, by definition of $\rhosm$ and because $g$ is pure, each $g_i$ is pure, then
 the image $\rhosm(\mcgm_{\sigma})$ is a pure subgroup of $\prod_{i=1}^k \mcgsiqi$. Let  $\Gamma_i$ be the projection of the image $\rhosm(\mcgm_{\sigma})$ over  $\mcgsiqi$ for each $i$, then each $\Gamma_i$ is torsion free and we have  
 \begin{align}\label{cutting-hom-m}
\rhosm \colon \mcgm_{\sigma }\to \Pi_{i=1}^k \Gamma_i,
\end{align}
where  $\ker(\rhosm)$ is a free abelian subgroup of $\ker(\rhos)\simeq \z^r$, observe that $\ker(\rhosm)\simeq \z^r$ as $\mcgm$ is of finite index and 
$\ker(\rhosm)$ is of finite index in $ker(\rho_{\sigma})$ . As a reference, see \cite[Sec. 7.5]{ivanov}. 

\begin{lem}  \cite[Lem. 8.7]{ivanov} \label{lem-abelian}
Suppose that $S$ has empty boundary. Let $G$ be a subgroup of $\mcgm$, $m\geq 3$. Let $\sigma=\sigma(G)$, and suppose that $\rhos(G)=\Pi_{i=1}^k G_i$, where  $G_i$ denotes the projection of $\rhosm(G)$ over $\mcgsiqi$ for each $i$. The group   $G$ is abelian if and only if each $G_i$ is either trivial or an infinite cyclic group.  
\end{lem}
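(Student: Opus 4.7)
The plan is to handle the two directions separately, relying on the canonical form decomposition (Theorem \ref{canonical-descom-mod}) and the centralizer theory of pseudo-Anosov classes.

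\emph{Forward direction.} Each $G_i$ is the image of $G$ under $\rhos$ followed by projection to the $i$-th coordinate, hence is abelian whenever $G$ is. Because $G \subseteq \mcgm$, Remark \ref{rem-image-pseudo} ensures that every element of $G_i$ is either the identity or a pseudo-Anosov class in $\mcgsiqi$. If every element is the identity then $G_i = 1$. Otherwise fix a pseudo-Anosov $f_i \in G_i$; abelianness forces $G_i \subseteq C_{\mcgsi}(f_i)$, which by Theorem \ref{normalizer-pseudoA} is virtually infinite cyclic. Since $G_i$ consists only of identities and pseudo-Anosov elements, it is torsion-free; any torsion-free subgroup of a virtually infinite cyclic group is cyclic, so $G_i$ is infinite cyclic.

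\emph{Reverse direction.} Given $g,h \in G$, write their canonical forms
\begin{align*}
g = \prod_{i=1}^k \eta_{S_i}(\bar g_i) \prod_{j=1}^r T_{\alpha_j}^{a_j}, \qquad h = \prod_{i=1}^k \eta_{S_i}(\bar h_i) \prod_{j=1}^r T_{\alpha_j}^{b_j}.
\end{align*}
The Dehn twists $T_{\alpha_j}$ are central in $\mcg_{\sigma}^0 \supseteq G$, so they cancel in the commutator, giving $[g,h] = \prod_{i=1}^k \eta_{S_i}([\bar g_i, \bar h_i])$. Because the subsurfaces $S_i$ are pairwise disjoint, $[g,h] = 1$ in $\mcg$ if and only if $[\bar g_i, \bar h_i] = 1$ in $\Gamma(S_i)$ for each $i$, so it suffices to verify componentwise commutation.

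\emph{Componentwise commutation.} If $G_i = 1$ then both $\bar g_i$ and $\bar h_i$ lie in $\ker \theta_{S_i}$, the subgroup generated by Dehn twists about the boundary curves of $S_i$; this kernel is central in $\Gamma(S_i)$ because those curves are fixed pointwise by every mapping class, so the two elements commute. If $G_i = \langle f_i\rangle$, pick any lift $\bar f_i \in \Gamma(S_i)$ under the surjection $\theta_{S_i}$ and write $g_i = f_i^{n_i}$, $h_i = f_i^{m_i}$; then $\bar f_i^{-n_i}\bar g_i$ and $\bar f_i^{-m_i}\bar h_i$ both lie in the central kernel $\ker \theta_{S_i}$, so $\bar g_i$ and $\bar h_i$ both belong to the abelian subgroup $\langle \bar f_i\rangle \cdot \ker \theta_{S_i}$ of $\Gamma(S_i)$ and therefore commute.

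The main subtlety I anticipate is that $\bar g_i$ and $\bar h_i$ are not literal powers of a common element of $\Gamma(S_i)$; they become so only after the corking map $\theta_{S_i}$. The remedy is to absorb the ambiguity into the central subgroup $\ker\theta_{S_i}$, which works precisely because boundary components of $S_i$ are fixed pointwise by all mapping classes. Once this is done, the disjoint-support property of the homomorphisms $\eta_{S_i}$ automatically lifts componentwise commutation to commutation in $\mcg$, completing the equivalence.
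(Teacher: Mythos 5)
The paper gives no proof of this lemma at all: it is quoted directly from Ivanov \cite[Lem.\ 8.7]{ivanov}, so there is no internal argument to compare against. Your proposal is a correct, self-contained derivation from tools the paper already has (the canonical form, centrality of $\ker\theta_{S_i}$ and of the twists $T_{\alpha_j}$ in $\mcg_{\sigma}^0$, and McCarthy's description of centralizers of pseudo-Anosov classes), and the reverse direction is exactly the mechanism the paper itself deploys in the proof of Proposition \ref{prop-centralizer}: reduce commutation of $g,h$ to commutation of the components $\bar g_i,\bar h_i$ and absorb the ambiguity of the lifts into the central kernel $\ker\theta_{S_i}$. Note that you only use the implication ``componentwise commutation implies $[g,h]=1$''; the converse you assert in passing would require the additional fact that a commutator cannot equal a nontrivial product of boundary twists, but nothing downstream depends on it.

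One step in the forward direction deserves more care. Remark \ref{rem-image-pseudo} guarantees that the components of $\rhoso(f)$ are identity or pseudo-Anosov only when one cuts along $\sigma(f)$, the canonical reduction system of the single element $f$; here you cut along $\sigma(G)$, which for an individual $g\in G$ may be strictly larger than $\sigma(g)$. The claim is still true, but you should justify it: since $G$ is abelian, Lemma \ref{lem-red-syst} gives $h\sigma(g)=\sigma(hgh^{-1})=\sigma(g)$ for every $h\in G$, and purity of $\mcgm$ forces $h$ to fix each class of $\sigma(g)$, so $\sigma(g)\subseteq\sigma(G)$; and refining the cut system from $\sigma(g)$ to $\sigma(G)$ cannot create reducible components, because a pseudo-Anosov piece of $g$ admits no essential invariant curve (so no curve of $\sigma(G)$ lies essentially inside it) and an identity piece restricts to the identity. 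With that inserted, the rest of your forward direction is sound: each nontrivial $G_i$ is an abelian, torsion-free subgroup of the virtually infinite cyclic group $C_{\Gamma(\widehat{S}_i,\mathcal{Q}_i)}(f_i)$, hence infinite cyclic.
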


\begin{prop}\label{prop-modm-maximality}
Let $S$ be an orientable compact surface with finitely many punctures  and $\chi(S)<0$. 
 Let $m\geq 3$, then the group $\mcgm $ satisfies   property $Max_{\vci_{\mcgm}}$. 
 \end{prop}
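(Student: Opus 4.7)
The plan is to exploit torsion-freeness of $\mcgm$ for $m\geq 3$ (Theorem \ref{thm-modm-pure}): every $V\in\vci_{\mcgm}$ is then infinite cyclic, so it suffices to show that each $C=\langle f\rangle\in\vci_{\mcgm}$ lies in a unique maximal infinite cyclic subgroup. I will treat $\partial S=\emptyset$ first, with the bounded case reduced to it via the corking homomorphism. By the Nielsen--Thurston classification, $f$ is either pseudo-Anosov or reducible. The pseudo-Anosov case is immediate: by Theorem \ref{normalizer-pseudoA} and torsion-freeness, $C_{\mcgm}(f)$ is infinite cyclic, and any cyclic subgroup containing $\langle f\rangle$ sits inside it.

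For the reducible case, with $\sigma=\sigma(f)$ and $\rho_{\sigma,m}(f)=(f_1,\ldots,f_k)$ as in Remark \ref{rem-image-pseudo}, I aim to place every potential root of $f$ inside a single finitely generated torsion-free abelian subgroup. If $h\in\mcgm$ satisfies $h^s=f$ for some $s\neq 0$, then Lemma \ref{lem-red-syst}(i) gives $\sigma(\langle h\rangle)=\sigma(\langle h^s\rangle)=\sigma$, so $h\in\mcgm_\sigma$. Writing $\rho_{\sigma,m}(h)=(h_1,\ldots,h_k)$, torsion-freeness of each $\Gamma_i$ forces $h_i=1$ whenever $f_i=\mathrm{id}$, and Theorem \ref{normalizer-pseudoA} places $h_i$ in the infinite cyclic group $C_{\Gamma_i}(f_i)$ whenever $f_i$ is pseudo-Anosov. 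Hence every such $h$ belongs to $A:=\rho_{\sigma,m}^{-1}(H)$, where $H=\prod_i H_i$ with $H_i=\{1\}$ or $H_i=C_{\Gamma_i}(f_i)$ according to the type of $f_i$.

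The principal obstacle is showing $A$ is abelian; I intend to use Lemma \ref{lem-abelian}. The image $\rho_{\sigma,m}(A)=H=\prod_i H_i$ is by construction a product of subgroups each of which is trivial or infinite cyclic, so only the hypothesis $\sigma(A)=\sigma$ needs verification. The inclusion $\sigma\subseteq\sigma(A)$ is clear because the Dehn twists $T_{\alpha_j}$ lie in $\ker\rho_{\sigma,m}\subseteq A$ and each $\alpha_j$ is moved by some $T_{\alpha_l}$-transverse curve. For the reverse, any essential reduction class $\beta$ of $A$ must be disjoint from each $\alpha_j$ (else $T_{\alpha_j}\in A$ would not fix it), so $\beta$ lies in some component $S_i$ of $S_\sigma$; if $f_i$ is pseudo-Anosov then $f\in A$ itself fails to fix $\beta$, so $\beta$ is not even a reduction class, while if $f_i=\mathrm{id}$ then every element of $A$ restricts on $S_i$ to a product of boundary Dehn twists of $S_i$, which fixes every interior curve $\beta'\subset S_i$ transverse to $\beta$, contradicting essentiality. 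Hence Lemma \ref{lem-abelian} applies and $A$ is abelian.

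Since $A$ is torsion free and fits in the extension $1\to\ker\rho_{\sigma,m}\to A\to H\to 1$ with both kernel and quotient finitely generated free abelian, we obtain $A\cong\mathbb{Z}^N$ for some $N$. In a finite-rank lattice every nonzero element is contained in a unique maximal cyclic subgroup (generated by its primitive multiple along $\mathbb{Q}f$), and since every root $h$ of $f$ was shown to lie in $A$, this is the desired unique maximal cyclic in $\mcgm$ containing $\langle f\rangle$. For $\partial S\neq\emptyset$, I reduce to the preceding case through the corking homomorphism $\theta_S\colon\mcg\to\Gamma(\widehat{S})$, whose kernel is a central free abelian subgroup generated by boundary Dehn twists; the unique maximal cyclic containing $\theta_S(\langle f\rangle)$ in the closed-with-punctures case, together with this central kernel, assembles into the unique maximal cyclic in $\mcgm$ containing $\langle f\rangle$.
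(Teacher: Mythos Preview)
Your proposal is correct and follows essentially the same route as the paper's proof: the same case division (closed versus bounded, pseudo-Anosov versus reducible), the same abelian envelope $A=\rho_{\sigma,m}^{-1}\bigl(\prod_i H_i\bigr)$ in the reducible case (the paper calls it $G$), the same appeal to Lemma~\ref{lem-abelian}, and the same corking reduction for $\partial S\neq\emptyset$. The one place where you are actually more careful than the paper is that you explicitly verify $\sigma(A)=\sigma$ before invoking Lemma~\ref{lem-abelian}; the paper applies that lemma without checking this hypothesis.
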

\begin{proof}
\textbf{Case I}: Suppose that $S$ has empty boundary.    
Since $\mcgm$  is torsion free for $m\geq 3$,   
then $\vci_{\mcgm}=\C_{\mcg_m}$ is  the set  of infinite cyclic subgroups of $\mcgm$.\\
It is well-known that for surfaces $S$ with empty boundary, periodic elements of $\mcg$ are of
 finite order.  By the Nielsen-Thurston classification Theorem,
 each element of $\mcgm-\{Id\}$ is either reducible or pseudo-Anosov.\\
Let  $H=\langle f \rangle\in \C_{\mcg_m}$. Observe that if $\langle k\rangle=K\in \C_{\mcg_m}$ and $H\subseteq K$, then $f^n=k$  for some $n\in\z-\{0\}$. By Lemma \ref{lem-n(f)=n(fk)},
$C_{\mcgm}(K)= C_{\mcgm}(H)$, thus $K\subseteq C_{\mcgm}(H)$. Actually, we will prove that $K$ lies in a free abelian  subgroup of $\mcgm$ (which does not depends on $K$ but only on $H$), 
 then $H$ must be contained in an unique maximal subgroup of $\C_{\mcgm}$. 
 If $f$ is pseudo-Anosov, by Theorem \ref{normalizer-pseudoA},  
   $C_{\mcgm}(H)\in \C_{\mcgm}$  and it is the unique maximal subgroup of $\C_{\mcgm}$ containing $H$. 
   
   On the other hand,
suppose that $f$ is reducible and let $\sigma=\sigma(f)$ be its canonical reduction system,   by Lemma \ref{gs=s-reduct},   $C_{\mcgm}(H)\subseteq \mcgm_{\sigma}$.  Suppose that $\sigma$ has vertices  $\alpha_1,...,\alpha_r$, let  $\ss=S_1\cup \cdots S_k$ and $\rhosm$ as in (\ref{cutting-hom-m}),
 \begin{align*}
\rhosm \colon \mcgm_{\sigma }\to \Pi_{i=1}^k \Gamma_i, 
\end{align*}
where $\Gamma_i \subset \Gamma(\widehat{S}_i,\mathcal{Q}_i)$ is torsion free  for each $i$ and $\ker(\rhosm)\simeq \z^s$ is a  free abelian subgroup of $\langle T_{\alpha_1},...,T_{\alpha_r}\rangle \simeq \z^r$. Note that each $T_{\alpha_i}$ commutes with $f$, if $\rhosm(f)=(f_1,...,f_k)$,  then 
\begin{align*}
 \xymatrix{1 \ar[r]& \z^s \ar[r]& C_{\mcgm}(f) \ar[r]^(0.45){\rhosm}& \Pi_{i=1}^k C_{\Gamma_i}(f_i)\ar[r] &1 
},
\end{align*}
furthermore each $f_i$ is either the identity or pseudo-Anosov, see Remark  \ref{rem-image-pseudo}. Let $\rhosm(k)=(k_1,...,k_r)$, then we have that
\begin{itemize}
\item[(a)] $\text{for all } i,\;\; \; k_if_i=f_ik_i$,   
\item[(b)] $\text{for all } i\;$,  $f_i$ is an $n$-th root of $k_i$ because $f$ is an $n$-th root of $k$,
\item[(c)] if $f_j=Id$ for some $j$, then $k_j=Id$, because each $\Gamma_j$ is torsion free. 
\end{itemize}
Let $L=\{l_1,...,l_d\}\subseteq\{1,...,k\}$, such that  $l_i\in L$ if only if $f_{l_i}$ is pseudo-Anosov. By Theorem \ref{normalizer-pseudoA}, $C_{\Gamma_{l_i}}(f_{l_i})$ is an infinite cyclic subgroup for each $l_i\in L$. We regard $\Pi_{j=1}^d C_{\Gamma_{l_j}}(f_{l_j})$
  as subgroup of $\Pi_{i=1}^k C_{\Gamma_i}(f_i)$, let 
$$G=\rhosm^{-1}(\Pi_{j=1}^d C_{\Gamma_{l_j}}(f_{l_j}))\subseteq \mcgm_{\sigma},$$ 
then we have 
\begin{align*}
 \xymatrix{1 \ar[r]& \z^s \ar[r]& G \ar[r]^(0.35){\rhosm}& \Pi_{j=1}^d C_{\Gamma_{l_j}}(f_{l_j})\ar[r]& 1 
},
\end{align*}
by Lemma \ref{lem-abelian}, we conclude that $G$ is a free abelian subgroup and by cons\-truction, we have that  $K\subseteq G$. Since  $K$ was taken arbitrarily,  we conclude that there exists a unique maximal subgroup $H_{max}\in \C_{\mcgm}$ containing $H$.     
\textbf{Case II.} Suppose that $S$ has non empty boundary and that $S$ has $b\neq 0$ connected boundary components $\beta_1,...,\beta_b$.  Let $\theta_S\colon \mcg \to \Gamma(\widehat{S})$, be  the corking homomorphism as in (\ref{corking-h}),  with 
$$
\ker( \theta_S)=\langle T_{\beta_1},....,T_{\beta_b}\rangle \simeq \z^b.
$$
  By definition of $\theta_S$ we have that $\theta_S(\Gamma_m(S))\subseteq \Gamma_m(\widehat{S})$.     
Since each $T_{\beta_i}$ acts trivially on $H_1(S,\z)$,  $\ker( \theta_S) \subseteq \mcgm$, then $\ker(\theta_S|_{\mcgm})=\ker(\theta_S)$.  Then we have the following,   
\begin{align} \label{cork-m}
\xymatrix{
   1\ar[r]& \z^b \ar[r]& \mcgm \ar[r]^{\theta_{S,m}} &\Gamma_m(\widehat{S}).
}
   \end{align}
It is well known that $\mcg$ is  torsion free when $S$ has non-empty boundary, then $\C_{\mcgm}=\vc_{\mcgm} -\fin_{\mcgm}$ is the set of infinite virtually cyclic subgroups of $\mcgm$.\\ 
Let $A=\langle x \rangle \in \C_{\mcgm}$, if $B=\langle y\rangle \in \C_{\mcgm}$ is such that $A\subseteq B$, then $\theta_{S}(A)=\langle \theta_{S}(x)\rangle \subseteq \langle \theta_{S}(y)\rangle=\theta_{S}(B)$. As in CASE I, we will prove that $B$ lies in a free abelian subgroup of $\mcgm$. \\
By CASE I, we have that there exists a free abelian subgroup 
$\overline{G}\subset \Gamma_m(\widehat{S})$ such that if $K\in \C_{\Gamma_m(\widehat{S})}$ and $\theta_{S}(A)\subseteq K$, then $K\subseteq \overline{G}$. Let $G=\theta_{S,m}(\mcgm)\cap \overline{G}$, note that $G$ is a free abelian subgroup and $B\subseteq \theta_{S,m}^{-1}(G)$, because $y\in  \theta_{S,m}^{-1}(\theta(y) )$. 
\\
Let $g_1,g_2\in G$, $\tilde{g}_1\in \theta_S^{-1}(g_1)$ and $\tilde{g}_2\in \theta_S^{-1}(g_2)$, since $g_1$ and $g_2$ 
commute, by definition of $\theta_S$, it follows that $\tilde{g}_1$ and  $\tilde{g}_1$ must commute, therefore $\theta^{-1}(G)$  is a free abelian subgroup. Since $B$ was taken arbitrarily,   we  conclude that $A$ is contained in an unique maximal subgroup $A_{max}\in \C_{\mcgm}$. 
\\
Note that the free abelian subgroup does not depends on $B$ but only on $A$.
\end{proof}
We will apply the following Theorem.
\begin{thm}\cite[Thm. 5.8]{luck-weiermann}\label{thm-dimens-maximal}
Let $G$ be a group satisfying $Max_{\vci_G}$. Suppose  we know  that $\; \gdvc G < \infty$, then
  \begin{align}
    \gdvc G \leq \gdf G +1 .
  \end{align}
\end{thm}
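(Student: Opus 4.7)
The plan is to apply the L\"uck--Weiermann pushout construction of Theorem~\ref{thm-luck-weierm} with representatives chosen using $Max_{\vci_G}$, then bound the dimension of every summand in the pushout.

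\emph{Step 1 (choice of maximal representatives).} For every class $[H]\in[\vci_G^{\infty}]$, I would select the representative $H$ to be the unique maximal infinite virtually cyclic subgroup in its commensurability class, as afforded by $Max_{\vci_G}$. Two consequences follow from maximality, analogous to the argument used in Proposition~\ref{prop-modm-maximality}: (i) $N_G[H]=N_G(H)$, because any $g\in N_G[H]$ produces $gHg^{-1}$ which is commensurable with $H$ and itself maximal, hence equal to $H$ by uniqueness; and (ii) the family $\g[H]$ on $N_G(H)$ coincides with $\fin_{N_G(H)}\cup\{K\colon K\le H\}$, because any infinite virtually cyclic $K\in\g[H]$ lies in a unique maximal overgroup, which must coincide with $H$ since $K$ is commensurable with $H$.

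\emph{Step 2 (model for $E_{\g[H]}N_G(H)$).} Since $H\triangleleft N_G(H)$, I would form the short exact sequence
\[
1\longrightarrow H\longrightarrow N_G(H)\stackrel{p}{\longrightarrow}W\longrightarrow 1,\qquad W:=N_G(H)/H,
\]
and take $Z:=p^{\ast}\underline{E}W$ as an $N_G(H)$-CW-complex. The isotropy group of any $z\in Z$ is the preimage under $p$ of a finite subgroup of $W$, hence a virtually cyclic extension of $H$ lying in $\g[H]$. A fixed-point check shows that for $K\le N_G(H)$, $Z^K=(\underline{E}W)^{p(K)}$ is contractible iff $p(K)$ is finite; using (ii), this happens precisely when $K\in\g[H]$, since a finite image $p(K)$ combined with $K$ infinite forces $K\cap H$ to have finite index in $K$, whence $K$ is virtually cyclic and (ii) yields $K\le H$. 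Thus $Z$ is a model for $E_{\g[H]}N_G(H)$ and $\gd_{\g[H]}N_G(H)\le\gdf W$.

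\emph{Step 3 (the key dimension bound).} The heart of the proof is the inequality $\gdf W\le\gdf N_G(H)+1$. The strategy is to start from a model $Y$ for $\underline{E}N_G(H)$ of dimension $\gdf N_G(H)$, to consider the proper $W$-CW-complex $Y/H$ (which has finite stabilizers but is not contractible in general), and to exploit the fact that $H$ is virtually cyclic so that $\underline{E}H\cong\mathbb{R}$ is $1$-dimensional: the failure of $Y/H$ to be contractible is controlled by $H$-cohomology, which is concentrated in degrees $\le 1$, so at most one dimension of $W$-equivariant cells needs to be attached to produce a model for $\underline{E}W$. The hypothesis $\gdvc G<\infty$ is used here to guarantee that this cellular correction can be carried out in finite dimension. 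I expect this step to be the main obstacle, as it requires genuinely going beyond the pushout formalism and invoking geometric-cohomological input tied to the one-dimensionality of virtually cyclic groups.

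\emph{Step 4 (assembly of the pushout).} With the model from Step 3 in hand, Theorem~\ref{thm-luck-weierm}, applied with the top map $i$ chosen to be a cellular inclusion, yields a model $X$ for $\evce G$ of dimension at most
\[
\dim X\le\max\!\Bigl(\gdf G,\;\sup_{[H]\in I}\gd_{\g[H]}N_G(H)\Bigr)\le\max\bigl(\gdf G,\;\gdf N_G(H)+1\bigr)\le\gdf G+1,
\]
using $\gdf N_G(H)\le\gdf G$ from (\ref{gd-subg}). This gives the desired bound $\gdvc G\le\gdf G+1$. All remaining work after Step 3 is bookkeeping inside the pushout diagram.
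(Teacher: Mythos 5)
First, a point of comparison that matters for your submission: the paper does not prove this statement at all. It is quoted verbatim from L\"uck--Weiermann \cite[Thm.\ 5.8]{luck-weiermann} and used as a black box, so there is no in-paper proof to measure your attempt against; one can only judge it against the known argument. Your Steps 1, 2 and 4 are correct and reproduce the standard reduction: under $Max_{\vci_G}$ the maximal representative $H$ satisfies $N_G[H]=N_G(H)$, the family $\g[H]$ collapses to $\fin_{N_G(H)}\cup\{K\le H\}$, the pullback of $\underline{E}W$ along $N_G(H)\to W=N_G(H)/H$ is a model for $E_{\g[H]}N_G(H)$, and the pushout bookkeeping then gives $\gdvc G\le\max\{\gdf G+1,\;\sup_{[H]}\gd_{\g[H]}N_G(H)\}$. (Your displayed estimate in Step 4 drops the $+1$ that arises from replacing $i$ by an inclusion via a mapping cylinder on $\coprod G\times_{N_G[H]}\underline{E}N_G[H]$; this is harmless here because $\gdf N_G(H)\le\gdf G$, but the formula as written is not right.)

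The genuine gap is Step 3, which you correctly identify as the heart of the matter but do not prove. The inequality $\gdf W\le\gdf N_G(H)+1$ is true, but ``attach one layer of cells to $Y/H$'' is not an argument. Restricting $Y$ to $H$ gives a model for $\underline{E}H$, which is $H$-homotopy equivalent to $\mathbb{R}$ by Proposition \ref{prop-Daniel-Leary}, so $Y/H$ is homotopy equivalent to $\mathbb{R}/H$, i.e.\ to $S^1$ or a point. To promote $Y/H$ to a model for $\underline{E}W$ you must kill $\pi_1$ by attaching equivariant $2$-cells, which can create $\pi_2$, and nothing in your sketch bounds the dimension of the cells needed after that; ``$H$-cohomology is concentrated in degrees $\le 1$'' does not control the homotopy groups of the $W$-space you are building. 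The honest route is cohomological: one bounds $\mathrm{cd}_{\fin}W$ in terms of $\mathrm{cd}_{\fin}N_G(H)$ using a Bredon-cohomology spectral sequence or Mayer--Vietoris argument for $1\to H\to N_G(H)\to W\to 1$, and then converts back to geometric dimension via an Eilenberg--Ganea-type inequality of the form $\gdf W\le\max\{3,\mathrm{cd}_{\fin}W\}$, which introduces low-dimensional exceptions you never address. A symptom of the gap is that your argument makes no essential use of the hypothesis $\gdvc G<\infty$: you park it in Step 3 with the phrase ``to guarantee that this cellular correction can be carried out in finite dimension,'' which explains nothing. In the source that hypothesis is load-bearing precisely because the proof goes through (Bredon) cohomological dimension and a comparison anchored on an already-existing finite-dimensional model, rather than through the direct cell-by-cell construction you propose; if your Step 3 worked as stated, the hypothesis would be superfluous, and that tension should have signalled that the step cannot be closed by the purely geometric heuristic you give.
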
 
\begin{thm} \label{thm-cotas-mcg-mcgm}
Let $S$ be an orientable compact surface with finitely many punctures  and $\chi(S)<0$. 
 Let $m\geq 3$, then 
\begin{enumerate}
\item[(1)] $\gdvc \Gamma_m(S) \leq vcd( \Gamma(S)) +1 $;
\item[(2)] Let $[\Gamma(S):\Gamma_m(S) ]$ be the index  of  $\Gamma_m(S)$ in $\Gamma(S)$, then
\begin{align*}\gdvc \Gamma(S) &\leq  [\Gamma(S):\Gamma_m(S) ] \cdot \gdvc \Gamma_m(S)\\ &\leq [\Gamma(S):\Gamma_m(S) ]  \cdot (vcd(\Gamma(S) ) +1 ) .\end{align*} 
\end{enumerate}
Where $vcd(\Gamma(S))$ is the virtual cohomological dimension of $\Gamma(S)$.
\end{thm}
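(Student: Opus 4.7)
The strategy is to assemble the machinery developed in the preceding sections; no new technical ingredient is required. For part (1), I would verify the two hypotheses of Theorem \ref{thm-dimens-maximal} applied to $\Gamma_m(S)$. The property $Max_{\vci_{\Gamma_m(S)}}$ is exactly the content of Proposition \ref{prop-modm-maximality}. The finiteness $\gdvc \Gamma_m(S) < \infty$ follows from Theorem \ref{thm-gdf-mcg} together with the subgroup restriction inequality (\ref{gd-subg}): restricting the $\Gamma(S)$-action on any finite-dimensional model for $\underline{\underline{E}}\,\Gamma(S)$ to $\Gamma_m(S)$ yields a model for $\underline{\underline{E}}\,\Gamma_m(S)$. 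Invoking Theorem \ref{thm-dimens-maximal} then gives
\[
\gdvc \Gamma_m(S) \leq \gdf \Gamma_m(S) + 1.
\]

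To control the right-hand side, I would apply (\ref{gd-subg}) once more, this time to the family of finite subgroups, to get $\gdf \Gamma_m(S) \leq \gdf \Gamma(S)$, and then invoke Theorem \ref{thm-aramayona} of Aramayona--Mart\'inez, which provides a cocompact model for $\underline{E}\,\Gamma(S)$ of dimension $vcd(\Gamma(S))$, so that $\gdf \Gamma(S) \leq vcd(\Gamma(S))$. Chaining these inequalities yields the bound in (1).

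For part (2), since $\Gamma_m(S)$ has finite index in $\Gamma(S)$, I would directly apply the finite-index transfer result Theorem \ref{finite-index-gd} to obtain
\[
\gdvc \Gamma(S) \leq [\Gamma(S) : \Gamma_m(S)] \cdot \gdvc \Gamma_m(S),
\]
and then substitute the bound from (1) for the second inequality.

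The real obstacles in this theorem were addressed earlier: the nontrivial content lies in Proposition \ref{prop-modm-maximality} (the uniqueness of a maximal infinite virtually cyclic overgroup in $\Gamma_m(S)$, which sidesteps the need for any separate analysis of commensurators of virtually cyclic subgroups of $\Gamma_m(S)$ in this bound) and in Theorem \ref{thm-gdf-mcg} (the finiteness of $\gdvc \Gamma(S)$). Given those, this theorem is a short chain of inequalities, and I would expect no substantial difficulty in writing out the assembly.
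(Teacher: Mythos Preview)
Your proposal is correct and follows essentially the same route as the paper's own proof, which simply cites Theorem \ref{thm-dimens-maximal}, Proposition \ref{prop-modm-maximality}, and Theorem \ref{thm-gdf-mcg} for part (1) and Theorem \ref{finite-index-gd} for part (2). You have spelled out a few intermediate steps the paper leaves implicit (using (\ref{gd-subg}) to pass finiteness of $\gdvc$ to the subgroup and to bound $\gdf \Gamma_m(S)$ by $\gdf \Gamma(S)$, then invoking Theorem \ref{thm-aramayona}), but the structure is the same.
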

\begin{proof} 
By Theorem \ref{thm-dimens-maximal}, Proposition \ref{prop-modm-maximality} and  Theorem \ref{thm-gdf-mcg}, we conclude (1)  and applying Theorem \ref{finite-index-gd} we conclude (2). 
\end{proof}

 

\end{document}